\documentclass[12pt]{amsart}

\usepackage{fancyhdr}
\setlength{\headheight}{15.2pt}
\usepackage{amsmath}
\usepackage{amssymb}
\usepackage{amsfonts}
\usepackage{geometry}
\usepackage{url}
\usepackage{amssymb}
\usepackage{amsfonts}
\usepackage{geometry}
\usepackage{url}
\usepackage{tikz}
\usepackage{color}
\usepackage{setspace}
\usepackage{amsthm}
\pagestyle{plain}
\usepackage{amsmath,amscd}
\usepackage{esint}

\makeatletter
\def\@tocline#1#2#3#4#5#6#7{\relax
  \ifnum #1>\c@tocdepth 
  \else
    \par \addpenalty\@secpenalty\addvspace{#2}%
    \begingroup \hyphenpenalty\@M
    \@ifempty{#4}{%
      \@tempdima\csname r@tocindent\number#1\endcsname\relax
    }{%
      \@tempdima#4\relax
    }%
    \parindent\z@ \leftskip#3\relax \advance\leftskip\@tempdima\relax
    \rightskip\@pnumwidth plus4em \parfillskip-\@pnumwidth
    #5\leavevmode\hskip-\@tempdima
      \ifcase #1
       \or\or \hskip 1em \or \hskip 2em \else \hskip 3em \fi%
      #6\nobreak\relax
    \dotfill\hbox to\@pnumwidth{\@tocpagenum{#7}}\par
    \nobreak
    \endgroup
  \fi}
\makeatother

\usepackage{hyperref}

\newtheorem{thm}{Theorem}[section]
\newtheorem{lemma}[thm]{Lemma}
\newtheorem{proposition}[thm]{Proposition}
\newtheorem{corollary}[thm]{Corollary}
\newtheorem{fact}[thm]{Fact}

\theoremstyle{definition}
\newtheorem{definition}[thm]{Definition}
\newtheorem{example}[thm]{Example}





\setlength{\oddsidemargin}{0.2in}
\setlength{\evensidemargin}{0.2in}
\setlength{\textwidth}{6.2in}
\setlength{\topmargin}{-0.50in}
\setlength{\textheight}{8.9in}

\newcommand{\ignore}[1]{}
\newcommand{\DS}{\displaystyle}

\renewcommand{\epsilon}{\varepsilon}
\renewcommand{\div}{\operatorname{div}}

\newcommand{\tr}{\operatorname{tr}}
\newcommand{\diag}{\operatorname{diag}}
\newcommand{\C}{\mathbb{C}}

\newcommand{\R}{\mathbb{R}}

\newcommand{\wda}{\widetilde{A}}
\renewcommand{\phi}{\varphi}
\renewcommand{\l}{\left}
\renewcommand{\r}{\right}
\newcommand{\lb}{\lbrace}
\newcommand{\rb}{\rbrace}
\newcommand{\dist}{\operatorname{dist}}
\newcommand{\diam}{\operatorname{diam}}
\newcommand{\mult}{\operatorname{mult}}
\newcommand{\Lip}{\operatorname{Lip}}
\newcommand{\iso}{\operatorname{iso}}
\newcommand{\dil}{\operatorname{dil}}
\newcommand{\hreff}[1]{\hyperref[#1]{\ref*{#1}}}

\numberwithin{equation}{section}

%

%

\parindent 0in
\parskip 1.5ex
         
\begin{document}

\title{\hspace{0.75cm} Integral estimates for approximations by \newline volume preserving maps}

\author{Christopher Policastro}
\address{University of California, Berkeley, Department of Mathematics, 958 Evans Hall \#3840, Berkeley, CA 94720}
\email{cpoli@math.berkeley.edu}
\subjclass[2010]{35Q74, 49J20, 15A60}
\keywords{optimal transport, matrix nearness, incompressible limits}

\vspace{-1.4cm}

\begin{abstract} 
A quantitative Brenier decomposition shows that the deviation of a map from volume preserving is bounded by the deviation of the derivative from volume preserving. A study of the matrix nearness problem for $SL(n)$ and $Sp(2n)$ relates the estimate to incompressible deformations of elastic materials.      
\end{abstract}

\maketitle

\vspace{-2cm}

\tableofcontents

\vspace{-1.4cm}

\section{Background} \label{BACK}

A deformation $u: U \to \R^n$ of a homogeneous hyperelastic material can be understood through an energy function $W: \operatorname{Mat}^{n \times n} \to \R$ measuring the stored energy of $u(U)$. The derivative $DW$ relates the pressure of the deformation to the displacement of the material under the deformation. If $v:U \to \R^n$ satisfies $W(Dv)\equiv 0$, then the deformation does no work on the material. We can try to approximate $u$ by $v$ with a bound by $W(Du)$ on the difference. For small deformations, we have the energy function $$W_{so}(A) = \frac{1}{4} \l|A+A^T\r|^2 \ .$$ Note that $W_{so}(A) = \dist^2\l(A,so(n)\r)$ where $so(n)=\l\lb A \in \operatorname{Mat}^{n \times n} : A^T=-A \r\rb$. An esimtate of Korn implies that for  $u \in W^{1,p}(U,\R^n)$ with $1 < p < \infty$, there exist $A \in so(n)$ and $c \in \R^n$ such that 
\begin{equation} \label{BACK_Korn}
\int_U~ \l| u - \l(A x + c\r) \r|^p\ dx \leq C \int_U~ \operatorname{dist}^p\l(Du,so(n)\r)  \ dx 
\end{equation}
For large deformations, we have the energy function $$W_{SO}(A)=\l|\l(A^TA\r)^{\frac{1}{2}} - I\r|^2 \ .$$ Note that $W_{SO}(A) = \dist^2\l(A,SO(n)\r)$ for $\det A >0$ where $SO(n)=\newline \l\lb A \in \operatorname{Mat}^{n \times n} : A^T=A^{-1} \text{ and } \det A =1 \r\rb$. An estimate of Kohn \cite[p.~134]{K} and Friesecke-James-M\"{u}ller \cite[p.~1468]{FJM} implies that for $u \in W^{1,p}(U,\R^n)$ with $1 < p < \infty$, there exist $A \in SO(n)$ and $c \in \R^n$ such that 
\begin{equation} \label{BACK_FJM}
\int_U~ \left|u(x) - (A x + c)\right|^p\ dx \leq C \int_U~ \dist^p(Du, SO(n))\ dx 
\end{equation}
We can split the energy function into an  isochoric part measuring the stored energy of volume preserving deformations and a dilational part measuring the stored energy of volume changing deformations  $$W=W^{\operatorname{iso}} + \kappa\  W^{\operatorname{dil}} \ .$$ If the bulk modulus $\kappa$ is large, then $W^{\operatorname{dil}}$ contributes more to the stored energy meaning the material is nearly incompressible. We can try to approximate a compressible deformation by an incompressible deformation with a bound by $W^{\dil}$ on the difference. For small deformations, we have the energy functions $$W^{\iso}_{\operatorname{Hookean}}(A)=\l|A - \frac{1}{n} \tr A \cdot I\r|^2, \; \; W^{\dil}_{\operatorname{Hookean}}(A) = \frac{1}{n}\l| \tr A \r|^2 \ .$$ Proposition \hreff{SLG_linear} shows that for $u \in W^{1,p}\l(U,\R^n\r)$ with $1\leq p<\infty$, there exists $v \in W^{1,p}(U,\R^n)$ with $\div v = 0$ such that 
\begin{equation} \label{BACK_bdd_linear}
\int_U~ \l|u-v\r|^p\ dx \leq C \int_U~ \l|\tr Du\r|^p\ dx
\end{equation}
where $C:=C(n,p,\diam U)$. For large deformations, we have the energy functions $$W^{\iso}_{\operatorname{neo-Hookean}}(A) = \l|\frac{1}{\det^{\frac{1}{n}}A} A\r|^2 - n, \; \; W^{\dil}_{\operatorname{neo-Hookean}}(A) = \l(1-\det A\r)^2$$ where $\det A >0$.  Corollary \hreff{OTM_cor} shows that for $1<p<\infty$ and $u \in W^{1,\infty}\l(U,\R^n\r)$ injective, there exists $v \in W^{1,\infty}\l(U,\R^n\r)$ incompressible such that 
\begin{equation} \label{BACK_bdd}
\int_U~ \l|u - v \r|^p\ dx \leq C \int_U~ \l|1 - \det Du \r|^p\ dx 
\end{equation}
where $C:=C(n,p,d,\lambda,\Lambda)$. We treat the degenerate case of $p=1$ in Corollary \hreff{OTM_cor_1}. Note that $$W^{\dil}_{\operatorname{Hookean}}(A) =  \dist^2\l(A,sl(n)\r)$$ where $sl(n) := \l\lb A \in \operatorname{Mat}^{n\times n} : \operatorname{tr} A =0\r\rb$. The relation between $\dist\l(\cdot,sl(n)\r)$ and  $W^{\operatorname{dil}}_{\operatorname{Hookean}}$ suggests that the energy function $$W_{SL}(A) = \dist^2\l(A,SL(n)\r)$$ 
might satisfy an estimate comparable to (\hreff{BACK_bdd_linear}) where $SL(n) := \l\lb A \in \operatorname{Mat}^{n\times n} : \det A =1 \r\rb$. For $1\leq p<\infty$ and $u \in W^{1,p}\l( U,\R\r)$ we have 
\begin{equation} \begin{gathered} \label{BACK_fact}
\int_U~ \l| u - \l(x + c\r) \r|^p \ dx \leq \l| U \r| \underset{U}{\sup} \l|u-(x+c)\r|^p  \\ \leq  \l|U\r| \l(\int_U~ \l| 1 - u' \r|\ dx \r)^p \leq \l|U\r|^p \int_U~ \l|1-u' \r|^p\ dx \\ = \l|U\r|^p \int_U~ \dist^p\l(u',SL(1)\r)\ dx
\end{gathered} \end{equation}
where $c:=\fint_U u - x \ dx$. However,  (\hreff{BACK_fact}) does not hold for $n>1$. The energy functions $W_{SL}$ and  $W^{\operatorname{dil}}_{\operatorname{neo-Hookean}}$ are not equivalent. Example \hreff{MNP_example} shows that $W_{SL}$ is small and $W^{\operatorname{dil}}_{\operatorname{neo-Hookean}}$ is large for certain ill-conditioned matrices. Corollary \hreff{MNP_cor} provides bounds on $W_{SL}$ that suggest a modified energy function 
\begin{equation} \label{BACK_modified}
\l(n+ W^{\iso}_{\operatorname{neo-Hookean}}(A^n)\r) W_{SL}\l(A\r)\ .
\end{equation}
Here the distance to $SL(n)$ is weighted by a ratio reflecting the ill-conditioning of the matrix. Proposition \hreff{SLG_cor} shows that  (\hreff{BACK_fact}) holds for $n>1$ with the modified energy function.    

We can understand $W^{\iso}_{\operatorname{neo-Hookean}}$ as measuring stretching and shrinking along lines and $W^{\dil}_{\operatorname{neo-Hookean}}$ as measuring change of volume. We can incorporate stretching and shrinking along planes into the energy function. The Mooney-Rivlin energy function makes the modification to model more accurately deformations arising from forces in several directions (cf. Example  \hreff{NIM_Dynamic_example}). For $A \in \operatorname{Mat}^{2n \times 2n}$ and $$J = \begin{bmatrix} 0_{n \times n} & -1_{n \times n} \\ 1_{n \times n} & 0_{n \times n} \end{bmatrix}$$ the quantity $\l|A^T J A - J\r|^2$ controls the stretching and shrinking along coordinate planes $\R_{x_k} \times \R_{x_{k+n}}$. A deformation $u:U \to \R^{2n}$ that preserves the area of coordinate planes is called symplectic. These deformations are incompressible because they correspond to $Sp(2n) \subset SL(2n)$ (cf. Definition \hreff{SG_defn_sp_SP}). While incompressible deformations are symplectic for $n=1$, symplectic deformations are more rigid than incompressible deformations for $n>1$ because they neither stretch nor shrink the area of coordinate planes. Lemma \hreff{SG_approx_lemma} shows that incompressible deformations can be approximated by symplectic deformations. The observation allows us to treat (\hreff{BACK_bdd}) for symplectic deformations in Proposition \hreff{SG_prop} and to treat Corollary \hreff{SLG_cor} for $Sp(2n)$ in Corollary \hreff{SG_cor}. Proposition \hreff{SG_linearization} shows the analogue of (\hreff{BACK_bdd_linear}) for $sp(2n)$. 

We apply the estimates for $0 \ll \kappa$ to understand the trend of compressible deformations to incompressible deformations. Consider an energy function $W=W^{\iso} + \kappa W^{\dil}$. Firstly, we study deformations subject to boundary conditions. For boundary condition $v: \partial U \to \R^n$, we want to minimize $\int_U~ W\l(Du\r)\ dx$ subject to $u\mid_{\partial U} = v$. Assume that $v$ extends to $U$ with $\l| v\l(U\r)\r| = \l| U \r|$. Suppose $$u_{\kappa} = \operatorname{argmin} \l\lb \int_U~ W\l(Du\r)\ dx : u\mid_{\partial U} = v \r\rb$$ and $$u_{\infty} = \operatorname{argmin}\l\lb \int_U~ W^{\iso}\l(Du\r)\ dx : u\mid_{\partial U} = v \text{ and } \det Du \equiv 1 \r\rb \ .$$
We expect the minimizer $u_{\kappa}$ to relate to the minimizer $u_{\infty}$. Indeed, Dacorogna et al. \cite{Dacorogna} showed $u_{\kappa} \underset{\kappa \to \infty}{\to} u_{\infty}$ for energy functions with certain convexity and coercivity properties. We apply (\hreff{BACK_bdd}) in Proposition \hreff{NIM_prop_static} to determine an incompressible deformation $s_{\kappa}: U \to \R^n$ such that $$\int_U~ \l|u_{\kappa} -s_{\kappa} \r|^2\ dx \leq \frac{C}{\kappa}$$ where $C:=C\l(U,v,W\r)$. 

Secondly, we study the dynamics of deformations over time. For initial conditions $v(x;\kappa),~\widetilde{v}(x;\kappa):U \to \R^n$, the equations of motion are 
\begin{equation} \begin{gathered} \label{BACK_eqn_motion}
\begin{cases} 
u_{tt} = \div DW\l(Du\r) = \div DW^{\iso}\l(Du\r) + \kappa\ \div DW^{\dil}\l(Du\r) & \\ u(x,0;\kappa) = v(x;\kappa) \text{ and } u_t(x,0;\kappa) = \widetilde{v}(x;\kappa)  
\end{cases} 
\end{gathered} \end{equation}
If $v(x;\kappa)$ is incompressible, then we expect the compressible dynamics $\l\lb u(x,t;\kappa) \r\rb_{t \in [0,T]}$ to relate to the incompressible dynamics $\l\lb u(x,t;\infty) \r\rb_{t \in [0,T]}$. Indeed, Schochet \cite{Schochet} used observations about singular limits of hyperbolic systems of equations to show convergence $u(x,t;\kappa) \underset{\kappa \to \infty}{\to} u(x,t;\infty)$. We apply (\hreff{BACK_bdd}) in Proposition \hreff{NIM_prop} to determine incompressible deformations $\l\lb s(x,t;\kappa) \r\rb_{t \in [0,T]}$ such that for $0 \ll \kappa < \infty$ $$ \underset{t \in [0,T]}{\sup}\ \int_{U}\l|u(x,t;\kappa) - s(x,t;\kappa)\r|^2\ dx \leq \frac{C}{\kappa}$$ where $C:=C\l(U,v,\widetilde{v},W\r)$. 

The approach to (\hreff{BACK_bdd}) should be compared with the approach to (\hreff{BACK_FJM}). Heuristically, Friesecke-James-M\"{u}ller decompose $u:U \to \R^n$ as $u=w+v$ where $$\begin{cases} \Delta w = 0 & \text{ in } U \\ w=u & \text{ on } \partial U \end{cases} \; , \; \begin{cases} \Delta v = \div u & \text{ in } U \\ v = 0 & \text{ on } \partial U \end{cases} \ .$$ Here $w$ minimizes $\int_U~ \l|Dz\r|^2\ dx$ subject to a boundary constraint involving $u$. They remove $Dv$ because average distance of $Du$ to $SO(n)$ controls $Dv$. They determine $A \in SO(n)$ nearest to $Dw$ on average among distance preserving maps. This implies $\dist\l(Dw,SO(n)\r) \approx \l|Dw-A\r| \approx \dist\l(A^{-1} Dw-I,so(n)\r)$. The estimate reduces to (\hreff{BACK_Korn}) for $A^{-1}Dw-I$. 

Heuristically, we use the Brenier decomposition to express $u:U \to \R^n$ as $u=D\psi \circ v$ where $$\begin{cases} \det D^2 \psi\l(y\r) = \det D u\l(u^{-1}\l(D\psi(y)\r)\r) & \text{for } y \in u(U) \\ D\psi\l(u(U)\r) \subset u(U) & \end{cases} \; , \; \begin{cases} \det Dv(x)  = 1 & \text{for } x\in U \\ v(U) \subset u(U) & \end{cases}$$ in a weak sense \cite[p.~3]{Brenier2}. Here $v$ is nearest to $u$ on average among incompressible maps. This implies that $\psi$ minimizes $\int_{u\l(U\r)} \l|D\varphi(y) - y \r|^2\ dy$ subject to a determinant constraint involving $u$. We remove $v$ because $\int_U~ |u - v|^2\ dx = \int_{u(U)}~|D\psi - y|^2\ dy$. We bound $\int_{u(U)}~|D\psi - y|^2\ dy$ by $\int_{u(U)}~|w - y|^2\ dy$ where $$\begin{cases} \det Dw(y) =\det Du\l(u^{-1}\l(w(y)\r)\r) & \text{ for } y \in u(U) \\ w\l(u(U)\r) \subset u(U) & \end{cases} \ .$$ Here $w$ is constructed from the flow of a vector field $z:u(U) \to \R^n$. We can control $w$ by $z \approx w - y$. The estimate reduces to (\hreff{BACK_bdd_linear}) for $z$. 

The differences between the approach to (\hreff{BACK_bdd}) and the approach to (\hreff{BACK_FJM}) reflect the differences between the Laplace equation and the Monge-Amp\`{e}re equation. Firstly, solutions to the Monge-Amp\`{e}re equation are not unique because a solution can be composed with a volume preserving map to yield another solution. While distorted solutions can be treated by restricting to normalized domains \cite[p.~138]{V}, we must treat the distortion through a bound on the diameter of the image. Secondly, the Monge-Amp\`{e}re equation is degenerate elliptic. The treatment of existence and regularity of solutions requires control on the degeneracy through control on the determinant. We need bounds on the determinant for existence of weak solutions and $L^p$ estimates on the weak solutions.     

\textbf{Notation.} Throughout maps denoted with letters in the Greek alphabet are scalar valued and maps denoted with letters in the Latin alphabet are vector valued. The set $U \subset \R^n$ denotes an open, bounded, connected region with Lipschitz boundary \cite[p.~12]{Grisvard}. A function $u \in W^{1,\infty}(U,\R^n)$ is identified with its continuous representative. Take $\Lip(u):=\l|\l|Du\r|\r|_{L^{\infty}\l(U,\R^n\r)}$. Further notation is explained in Definition \hreff{OTM_defn_measure_pushforward}, Definition \hreff{OTM_defn_mult_K}, Definition \hreff{MNP_defn_norm_sl_SL}, and Definition \hreff{SG_defn_sp_SP}.  

\vspace{-0.15cm}

\section{Optimal transport maps} \label{OTM}

The main results of Section \hreff{OTM} are Proposition \hreff{OTM_prop} for $1<p<\infty$ and Proposition \hreff{OTM_prop_1} for $p=1$. For the degenerate case $p=1$, we must use a different approach. Restricting to injective maps gives Corollary \hreff{OTM_cor} and Corollary \hreff{OTM_cor_1} stated in (\hreff{BACK_bdd}). Throughout, we must distinguish between different notions of volume preserving.

\begin{definition} \label{OTM_defn_measure_pushforward}
\textbf{(1)} Let $A \subset \R^n$ be measurable and $s \in L^{\infty}\l(A,\R^n\r)$. If there exists $B \subset \R^n$ measurable with $s_{\#}\l(\mathcal{H}^n \llcorner A\r) = \mathcal{H}^n \llcorner B$ \cite[p.~2]{EG}, then call $s$ measure preserving. We can characterize $s$ as measure preserving from the property $$\int_A~ v \circ s\ dx = \int_B~ v\ dy $$ for all $v \in L^1\l(B\r)$. 

\textbf{(2)} Let $A \subset \R^n$ be measurable. Take $s \in L^{\infty}\l(A,\R^n\r)$ differentiable at $x$ \cite[p.~81]{EG} for a.e. $x \in A$. If $\det Ds(x) = 1$ for a.e. $x \in A$, then call the Jacobian equal to one.

\textbf{(3)} Take $A \subset \R^n$ and $u:A \to \R^n$. For $y \in \R^n$ set $$\omega_u(y):=\begin{cases} 0 & \text{ for } P_u(y) \text{ empty } \\ \sum_{x \in P_u(y)} \frac{1}{\det Du(x)} & \text{ for } P_u(y) \text{ nonempty, countable} \\ \infty & \text{ for } P_u(y) \text{ uncountable }  \end{cases}$$ where $P_u(y) := \l\lb x \in A : u(x) = y \text{ and } Du(x) \text{ exists with } \det Du(x) >0  \r\rb$. 
\end{definition}

For $s \in W^{1,\infty}\l(A,\R^n\r)$ injective, the area formula \cite[p.~99]{EG} implies that $s$ is measure preserving if and only if the Jacobian of $s$ is equal to one. The equivalence may fail without the the assumption of injectivity. 

\begin{example} \label{OTM_example}
\textbf{(1)} The map expressible as $$(0,1) \times S^{n-1} \ni (r,\underline{\theta}) \to (2^{1-n} r, 2 \underline{\theta}) \in \R_{>0} \times S^{n-1}$$ in polar coordinates is not measure preserving. However, the Jacobian is equal to one. 

\textbf{(2)} The map $$(-1,1)^n \ni (x_i)_{i=1}^n \to \l( \frac{1}{2} \l|x_i\r| \r)_{i=1}^n \in \R^n$$ is measure preserving. However, the Jacobian is not equal to one.

\textbf{(3)} The injective map $$B_1(0) \ni x \to \frac{x}{|x|} \l(2^n -1 + |x|^n \r)^{1/n} \in \R^n$$ lies in $W^{1,p}\l(B_1(0),\R^n\r)$ for $1 \leq p < n$. It is measure preserving. Its Jacobian is a.e. equal to one. However, the image contains a cavity.  
\end{example}

We should think of measure preserving maps as globally volume preserving, and maps with Jacobian identically equal to one as locally volume preserving. Nonetheless, we can relate the different notions with observations of Brenier-Gangbo \cite{BG}. 

\begin{lemma} \label{OTM_approx_lemma}
 Take $U \subset \R^n$ with $n>1$ and $s \in L^{\infty}(U,\R^n)$ measure preserving. Let $1\leq p<\infty$. For any $\epsilon >0$, there exists $S \in C^{\infty}_{\operatorname{diff}}(\R^n,\R^n)$ with $\det DS \equiv 1$ such that $$\int_U~ \l|s-S\r|^p\ dx \leq \epsilon  \ .$$  
\end{lemma}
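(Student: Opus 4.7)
The plan is to reduce the statement to the $L^p$-density theorem of Brenier--Gangbo, which for $n>1$ asserts that smooth volume-preserving diffeomorphisms of a bounded Lipschitz domain $W$ are $L^p$-dense in the set of measure-preserving self-maps of $W$. Since this theorem is formulated for self-maps of a single domain, the first task is to reshape our $s:U\to\R^n$ into a measure-preserving self-map of a larger ball, and the last task is to extend the approximating diffeomorphism from that ball to all of $\R^n$.

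First I would set up the reduction. Since $s\in L^\infty(U,\R^n)$, the essential image $B$ of $s$ is bounded, so there is a bounded Lipschitz open set $W$ containing $U\cup B$ strictly in its interior. The measure-preserving property forces $|U|=|B|$, hence $|W\setminus U|=|W\setminus B|$, so standard equimeasurable rearrangement on open sets supplies a measure-preserving map $\widetilde{s}:W\setminus U\to W\setminus B$. The combined map $\overline{s}:W\to W$, equal to $s$ on $U$ and $\widetilde{s}$ on $W\setminus U$, is then a measure-preserving self-map of $W$ with $\int_U |s-\Phi|^p\,dx\leq \int_W |\overline{s}-\Phi|^p\,dx$ for any competitor $\Phi$.

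Next I would apply the Brenier--Gangbo density theorem \cite{BG} (using the hypothesis $n>1$) to $\overline{s}$ on $W$. This produces a smooth diffeomorphism $\Phi:W\to W$ with $\det D\Phi\equiv 1$ and $\int_W|\overline{s}-\Phi|^p\,dx\leq \epsilon/2$. To manufacture the required $S\in C^\infty_{\operatorname{diff}}(\R^n,\R^n)$, I would arrange (by slightly enlarging $W$ and composing with the time-one flow of a compactly supported divergence-free vector field $X$ on $\R^n$) that $\Phi$ agrees with the identity in a neighborhood of $\partial W$, at the cost of an arbitrarily small additional $L^p$ error; then extend by the identity outside $W$. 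The flow of $X$ is automatically a smooth volume-preserving diffeomorphism of $\R^n$, so composing gives the desired $S$ with $\det DS\equiv 1$ globally.

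The main obstacle is the passage between the two inequivalent notions of volume preservation discussed in Example \hreff{OTM_example}: $\overline{s}$ is only measure-preserving and can be highly irregular, whereas $S$ must be a smooth bijection with pointwise Jacobian equal to one. That gap is exactly the content of Brenier--Gangbo's theorem, which uses the slack available in dimension $n>1$ to perform interchange rearrangements inside $W$ (the theorem genuinely fails for $n=1$, where orientation obstructs approximating non-monotone measure-preserving maps by diffeomorphisms). The auxiliary steps — extending $s$ to a self-map in order to match the hypotheses of \cite{BG}, and modifying $\Phi$ near $\partial W$ so it extends trivially to $\R^n$ — are conceptually routine but essential for delivering the statement in the form claimed.
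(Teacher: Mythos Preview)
Your proposal is correct and shares the paper's core strategy: extend $s$ to a measure-preserving self-map of a larger domain and invoke the Brenier--Gangbo density theorem (which requires $n>1$). The construction of the extension, however, differs. You fill the complement $W\setminus U$ with an arbitrary measure-preserving map onto $W\setminus B$, whereas the paper first applies the Brenier polar factorization $s=D\psi\circ s_1$, translates $V$ by a vector $c$ so that $U$ and $c+V$ are disjoint inside a cube $Q$, and defines the extension $s_2$ to be $c+s$ on $U$, $D\psi^*(\cdot-c)$ on $c+V$, and the identity elsewhere. The paper's route costs the polar-decomposition machinery but pays off at the boundary: since $s_2$ is already the identity near $\partial Q$, the Brenier--Gangbo approximant $s_\epsilon$ (built from permutations of small subcubes) is automatically the identity near $\partial Q$ and extends trivially to $\R^n$; setting $S:=s_\epsilon-c$ finishes. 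Your approach avoids the polar factorization, which is a genuine simplification, but your boundary-fixing step (``compose with the time-one flow of a compactly supported divergence-free field'') is vague as written. A cleaner fix in your framework is to choose $W$ large enough that a collar $N$ near $\partial W$ misses both $U$ and $B$, and then pick $\tilde{s}$ to be the identity on $N$ (which is possible since $|(W\setminus U)\setminus N|=|(W\setminus B)\setminus N|$); this makes $\bar{s}=\mathrm{id}$ near $\partial W$ and the extension issue disappears exactly as in the paper.
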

\begin{proof}
 Since $s$ is measure preserving, there exists $V \subset \R^n$ measurable such that \newline $s_{\#}\l(\mathcal{H}^n \llcorner U\r) = \mathcal{H}^n \llcorner V$. Note that $V$ is contained in the essential image of $s$. For $N \subset \R^n$ with $|N|=0$, we have $$\l|s^{-1}(N) \r| =\l| N \cap V \r| \leq |N| = 0 \ .$$ Since $s \in L^{\infty}(U,\R^n)$, this implies the existence of $s_1: U \to U$ measure preserving and $\psi: \R^n \to \R$ convex such that $s(x) = D\psi \circ s_1(x)$ for a.e. $x \in U$ \cite[p.~120]{V}. Take $\psi^*(y):= \sup_{x\in \R^n} x \cdot y - \psi(x)$. Note $D \psi^*(V) \subset \overline{U}$ with $$D\psi^* \l(D\psi(x) \r) = x ~ \text{ for a.e. } ~ x\in U$$ and $$D\psi \l(D\psi^*(y) \r) = y ~ \text{ for a.e. } ~ y\in V$$ \cite[p.~66]{V}. This implies that for $A \subset U$ measurable, we have $\l|\l(D\psi^*\r)^{-1}\l(A\r)\r| = \l|D\psi(A)\r|$ and $\l| D\psi^{-1}\l(  D\psi(A) \r) \r| = |A|$. Therefore 
\begin{equation}
 \begin{gathered} \label{OTM_approx_lemma_eqn}
  \l|\l(D\psi^*\r)^{-1}\l(A\r)\r| = \l|D\psi(A)\r| \\ \underset{s \text{ measure preserving }}{=} \l|s^{-1}\l(D\psi(A) \r)\r| \\ = \l|s_1^{-1}\l( D\psi^{-1}\l(  D\psi(A) \r) \r)\r| \\ \underset{s_1 \text{ measure preserving }}{=} \l| D\psi^{-1}\l(  D\psi(A) \r) \r| = |A|
 \end{gathered}
 \end{equation}
 Set $c:=\l( ||s||_{L^{\infty}} + \underset{x \in \overline{U}}{\sup}~ |x| \r)_{i=1}^n$. Note $U \cap \l(c + V\r) = \emptyset$. Let $Q \subset \R^n$ be an open cube containing $\overline{U}$ and $c+V$. We can take $Q \subset B_r(0)$ for $r:=\l(1+\sqrt{n}\r)\l(\l|\l| s\r|\r|_{L^{\infty}} + \underset{x \in \overline{U}}{\sup}~ \l|x\r| \r)$. For $x \in Q$ take $$s_2(x) := \begin{cases} x & x \not\in U \DS \cup \l(c+V\r) \\ c + s(x) & x \in U \\ D\psi^*(x - c) & x \in c+V \end{cases} \ .$$ Note that $s_2:Q \to Q$ is measure preserving because $\l|\l(D\psi^*\r)^{-1}\l(A\r)\r| = |A|$ by (\hreff{OTM_approx_lemma_eqn}). There exists $s_{\epsilon} \in C^{\infty}(Q,Q)$ such that $$\int_Q~ \l|s_2 - s_{\epsilon} \r|^2 \ dx \leq \epsilon$$ with $\det Ds_{\epsilon} \equiv 1$ \cite[p.~156]{BG}. Here we use $n>1$. By construction $s_{\epsilon}(x) = x$ in a neighborhood of $\partial Q$. Extend to $s_{\epsilon} \in C^{\infty}(\R^n,\R^n)$. Note $$\int_U~ \l|s - \l(s_{\epsilon} -c\r) \r|^2 \ dx= \int_U~ \l|s_2 - s_{\epsilon} \r|^2 \ dx  \leq \epsilon \ .$$ Set $S:=s_{\epsilon} - c$. Note $S(x) - x \equiv -c$ for $x \not\in Q$.  
\end{proof}

We extend Lemma \hreff{OTM_approx_lemma} in Lemma \hreff{SG_approx_lemma} for $n$ even.

\begin{definition} \label{OTM_defn_mult_K}
\textbf{(1)} Take $B \subset A \subset \R^n$ and $u: A \to \R^n$. If the mulitplicity function $$\R^n \ni y \to \mathcal{H}^0\l(B \cap u^{-1}\l\lb y \r\rb\r) \in [0,\infty]$$ is essentially bounded, then denote its $L^{\infty}$-norm by $\operatorname{mult}_B(u)$. If $\operatorname{mult}_B(u) = 1$, then call $u$ essentially injective on $B$. \newline \textbf{(2)} For $A \in \operatorname{Mat}^{n \times n}$ with $\det A > 0$, set $K(A) := \frac{\l|A\r|^n}{\det A}$.
\end{definition}

Firstly, we note that for $u \in W^{1,\infty}\l(U,\R^n\r)$ the multiplicity function is measurable \cite[p.~92]{EG}. Moreover, if $\underset{U}{\inf}~ \det Du > 0$, then we can define $\operatorname{mult}_B(u)$ for any $B \subset \subset U$ because $\underset{U}{\sup}~ \frac{\l| Du \r|^n}{\det Du} = \underset{U}{\sup}~ K(Du) < \infty$ \cite[p.~57]{HK}. Note that for $u$ essentially injective, the area formula \cite[p.~99]{EG} implies that $u$ is measure preserving if and only if the Jacobian of $u$ is equal to one. Secondly, we note that 
\begin{equation} \label{OTM_defn_mult_K_eqn}
1 \leq K(A) \leq C\l(n+ W^{\iso}_{\operatorname{neo-Hookean}}\l(A^n\r)\r)^{\frac{1}{2}}
\end{equation}
because $$0<\frac{1}{C} \leq \underset{A \in \operatorname{Mat}^{n \times n} \text{ nonzero} }{\inf}~ \l|\l(\frac{1}{\l|A\r|} A \r)^n \r|$$ where $C:=C(n)$. Therefore we state Proposition \hreff{SLG_cor} in terms of $K$ rather than the energy function (\hreff{BACK_modified}).

\begin{proposition} \label{OTM_prop}
Let $1<p<\infty$. Take $u \in W^{1,\infty}\l( U,\R^n\r)$ with $\operatorname{diam}\l(u(U)\r) \leq d$, $\mult_U(u) \leq m$, and $0 < \lambda \leq \det Du(x) \leq \Lambda$ for a.e. $x \in U$. There exists $s \in W^{1,\infty}\l(U,\R^{n}\r)$ measure preserving such that 
\begin{gather*}
\int_U~ \l|u - \l(\frac{\l|u(U)\r|}{\l|U\r|}\r)^{\frac{1}{n}}s \r|^p~dx \leq C  \int_U~ \l|\frac{\l|u(U)\r|}{\l|U\r|} - \frac{1}{\omega_u\l(u(x)\r)} \r|^p\ dx 
\end{gather*}
where $C=C_0~ m^{3p-2} \l(\frac{d}{\lambda}\r)^p \l(\frac{\Lambda}{\lambda}\r)^{2p-2}$ for a constant $C_0:=C_0(n,p)$.
\end{proposition}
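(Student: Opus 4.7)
The approach is to produce the measure preserving map $s$ via Brenier's polar factorization, estimate the transport cost by constructing a Dacorogna--Moser competitor, and close the loop using Proposition \hreff{SLG_linear}.

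First, set $c:=\l(|u(U)|/|U|\r)^{1/n}$, and let $T:u(U) \to u(U)$ be the Brenier transport pushing $\omega_u\, \mathcal{H}^n \llcorner u(U)$ forward to $c^{-n}\, \mathcal{H}^n \llcorner u(U)$; both measures have total mass $|U|$ by the area formula. Set $s:=c^{-1}\, T \circ u$, which satisfies $s_\#\l(\mathcal{H}^n \llcorner U\r) = \mathcal{H}^n \llcorner c^{-1} u(U)$ and is therefore measure preserving; the $W^{1,\infty}$ regularity, if not automatic from Caffarelli regularity, can be imposed by first approximating $u$ via Lemma \hreff{OTM_approx_lemma} and passing to a limit. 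The area formula (with multiplicity encoded in $\omega_u$) rewrites the two sides as
\[ \int_U \l|u - c s\r|^p\, dx = \int_{u(U)} \l|y - T(y)\r|^p \omega_u(y)\, dy \]
and
\[ \int_U \l|c^n - 1/\omega_u(u(x))\r|^p\, dx = \int_{u(U)} \l|c^n - 1/\omega_u(y)\r|^p \omega_u(y)\, dy . \]

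Second, I would construct a Dacorogna--Moser competitor for $T$. Interpolate $\rho_t:=(1-t)\omega_u + tc^{-n}$ between source and target; since $\omega_u - c^{-n}$ has zero integral over $u(U)$, Proposition \hreff{SLG_linear} (the linear estimate (\hreff{BACK_bdd_linear})) furnishes $\xi \in W^{1,p}\l(u(U),\R^n\r)$ with $\div \xi = \omega_u - c^{-n}$ and $\l|\l|\xi\r|\r|_{L^p} \leq C(n,p)\,d\, \l|\l|\omega_u - c^{-n}\r|\r|_{L^p}$. Setting $z_t := \xi/\rho_t$ makes $\partial_t \rho_t + \div(\rho_t z_t) = 0$, so the flow $\Phi_t$ of $z_t$ satisfies $(\Phi_t)_\#(\omega_u \mathcal{H}^n) = \rho_t \mathcal{H}^n$, and $w:=\Phi_1$ is a valid competitor for $T$.

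Third, I would invoke Brenier $L^2$-optimality $\int \l|T-y\r|^2 \omega_u\,dy \leq \int \l|w-y\r|^2 \omega_u\,dy$, bound $|w(y)-y| \leq \int_0^1 |z_t \circ \Phi_t|\,dt$ by Jensen, change variables via $(\Phi_t)_\#(\omega_u \mathcal{H}^n) = \rho_t \mathcal{H}^n$, and use the pointwise bounds $1/\Lambda \leq \rho_t \leq m/\lambda$ coming from $0 < \lambda \leq \det Du \leq \Lambda$ and $\mult_U(u) \leq m$. The competitor integral becomes a multiple of $\l|\l|\xi\r|\r|_{L^2}^2 \leq C d^2 \l|\l|\omega_u-c^{-n}\r|\r|_{L^2}^2$. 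The pointwise identity $\omega_u - c^{-n} = c^{-n}\omega_u(c^n-1/\omega_u)$, the area formula, and a H\"{o}lder step to upgrade $L^2$ to $L^p$ then convert this to a multiple of $\int_U |c^n - 1/\omega_u(u(x))|^p\,dx$.

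The main obstacle is bridging the $L^2$ Brenier optimality with the $L^p$ target estimate. I would exploit the a priori bound $|T(y)-y| \leq d$: for $p \geq 2$, $|T-y|^p \leq d^{p-2}|T-y|^2$, and for $1<p<2$ a H\"{o}lder interpolation against the weight $\omega_u\,dy$ plays the analogous role. The factor $(d/\lambda)^p$ in the final constant arises from this diameter bridging, while stacking the bounds $1/\Lambda \leq \omega_u \leq m/\lambda$ at each change of variables, each reciprocal inversion, and each conversion between $|\omega_u - c^{-n}|$ and $|c^n - 1/\omega_u|$ produces the high powers $(\Lambda/\lambda)^{2p-2}$ and $m^{3p-2}$.
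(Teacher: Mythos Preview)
Your overall architecture matches the paper's: rescale, set up an optimal transport, and estimate the transport cost via a Benamou--Brenier/Dacorogna--Moser interpolation whose velocity field solves a Poisson/divergence equation with source $\omega_v-1$. But two steps in your execution do not go through as written.

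\textbf{The $L^2$-to-$L^p$ bridge breaks for $p>2$.} You propose to use only the quadratic Brenier optimality $\int|T-y|^2\omega_u\,dy\leq\int|w-y|^2\omega_u\,dy$ and then reach $L^p$ by $|T-y|^p\leq d^{p-2}|T-y|^2$. Chasing this through your competitor bound gives
\[
\int_{u(U)}|T-y|^p\,\omega_u\,dy\;\leq\;C\,d^{p}\int_{u(U)}|\omega_u-c^{-n}|^2\,dy,
\]
whereas the target right-hand side is $\int_{u(U)}|\omega_u-c^{-n}|^p\,dy$ (up to bounded weights). For $p>2$ and $f:=\omega_u-c^{-n}$ small, $\int|f|^2\gg\int|f|^p$; since $f$ has zero mean it must vanish somewhere, so no pointwise lower bound rescues the inequality. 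The paper avoids this entirely: it takes the $L^p$-optimal transport map (existence via strict convexity of $|x|^p$ \cite[p.~141]{AGS}) so that $\int_U|v-\widetilde s|^p\,dx=W_p^p(\omega_v\mathcal H^n,\mathcal H^n)$, and then applies the Benamou--Brenier upper bound for $W_p$ directly, $W_p\leq\int_0^1\l(\int|w|^p\,d\mu_t\r)^{1/p}dt$ \cite[p.~183]{AGS}. The velocity $w=D\phi/\rho$ is controlled in $L^p$ by an $L^p$ Neumann estimate $\|D\phi\|_{L^p}\leq C(n,p)\,\diam V\,\|1-\omega_v\|_{L^p}$; no exponent conversion is needed.

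\textbf{Domain dependence and escape from $u(U)$.} Invoking Proposition~\hreff{SLG_linear} on the image $u(U)$ is dangerous: any Calder\'on--Zygmund or Poincar\'e constant for the divergence equation on $u(U)$ depends on the geometry of $u(U)$, i.e.\ on $u$ itself, and would contaminate $C_0$. Moreover the flow $\Phi_t$ of $z_t=\xi/\rho_t$ need not stay inside $u(U)$, so $\Phi_1$ is not obviously a competitor for the transport problem on $u(U)$. The paper handles both issues by extending $\rho$ by $1$ to the ball $B_{2\diam V}(0)\supset V$, solving the Neumann problem there (whose constant depends only on $n,p$ after rescaling to the unit ball via \cite{GS}), and never needing the flow at all---only the continuity-equation pair $(\mu_t,w)$ enters the $W_p$ bound.

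Finally, the $W^{1,\infty}$ regularity of $s$ is obtained exactly as you hint but not through Caffarelli (no convexity of $u(U)$ is assumed): the paper first gets a merely measurable measure-preserving $\widetilde s$ and then applies Lemma~\hreff{OTM_approx_lemma} to replace it by a smooth volume-preserving diffeomorphism at $L^p$-cost $\leq\frac{1}{2}\delta$.
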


\begin{proof}
Step (1) rephrases the estimate. Step (2) treats $n=1$. Step (3) through Step (7) treat $n>1$. Throughout $W_p$ denotes Wasserstein distance \cite[p.~151]{AGS} and $\mathcal{P}_p$ denotes probability measures on $\R^n$ with finite $p$th moments \cite[p.~106]{AGS}. Set $q := \frac{p}{p-1}$.

\textbf{(1)} Note that $u \in W^{1,\infty}\l(U,\R^n\r)$ and $0 < \lambda \leq \det Du(x)$ for a.e. $x \in U$ imply that $K\l(Du(x)\r) \leq \frac{\l|\l|Du\r|\r|_{L^{\infty}}^n}{\lambda}$ for a.e. $x \in U$. Therefore $u(U) \subset \R^n$ is open \cite[p.~43]{HK}. We have that $u(U)$ is open, bounded, connected with $\diam u(U) >0$ and $\l|u(U)\r| >0$. Note that $\l|u(U)\r| \leq \int_U~ \det Du\ dx \leq \Lambda \l|U\r|$ and $\frac{\lambda}{m} \l|U\r| \leq \int_U~ \frac{1}{m} \det Du\ dx \leq \l|u(U)\r|$ \cite[p.~99]{EG}. This implies that
\begin{equation} \begin{gathered} \label{OTM_prop_bdd1}
\frac{\lambda}{m} \leq \frac{\l|u(U)\r|}{\l| U\r|} \leq \Lambda
\end{gathered} \end{equation}
Take $v:=\l(\frac{\l|u(U)\r|}{\l|U \r|}\r)^{-\frac{1}{n}} u$. Note that $\operatorname{mult}_U(v)=\operatorname{mult}_U(u) \leq m$. Note that (\hreff{OTM_prop_bdd1}) implies 
\begin{equation} \label{OTM_prop_bdd1_5}
\begin{gathered}
\frac{\lambda}{\Lambda} \leq \det Dv(x) \leq \frac{m \Lambda}{\lambda} 
\end{gathered}
\end{equation}
for a.e. $x \in U$. Take $V:=v(U)$. We have that $V$ is open, bounded, connected with $0< \diam V = \l(\frac{\l|u(U)\r|}{\l|U \r|}\r)^{-\frac{1}{n}} d$ and $\l|V\r| >0$. For $N \subset U$ with $\l|N\r| =0$, we have $v(N) \subset V$ measurable with $\l|v(N)\r| \leq \l|\l|Dv\r|\r|_{L^{\infty}}^n \l|N\r| = 0$ \cite[p.~92]{EG}. Taking $N:=\l\lb x \in U~:~(\hreff{OTM_prop_bdd1_5}) \text{ does not hold at } x \r\rb$ shows that $P_v(y) = v^{-1}(y)$ for a.e. $y \in V$. This implies that $0< \mathcal{H}^0\l( P_v(y) \r) \leq m$ for a.e. $y \in V$. By (\hreff{OTM_prop_bdd1_5})  
\begin{equation}
\begin{gathered} \label{OTM_prop_bdd2}
\frac{\lambda}{m\Lambda} \leq \omega_v(y) := \sum_{x \in P_v(y)} \frac{1}{\det Dv(x)} \leq \frac{m\Lambda}{\lambda} 
\end{gathered}
\end{equation}
for a.e. $y \in V$. Note $v_{\#}\l(\mathcal{H}^n \llcorner U \r) \ll \mathcal{H}^n \llcorner V$ with density $\omega_v$ \cite[p.~81, p.~99]{EG}. This implies that
\begin{equation} \label{OTM_prop_bdd7}
\begin{gathered}
\l|v^{-1}\l(N \r) \r| = \int_N~ \omega_v\ dy \leq \frac{m\Lambda}{\lambda} \l|N\r| =0  
\end{gathered}
\end{equation}
for all $N \subset V$ with $\l|N\r|=0$. Therefore $\omega_v(v) \in L^{\infty}\l(U\r)$ and $\frac{1}{\omega_v(v)} \in L^{\infty}\l(U\r)$. Note 
\begin{equation} \label{OTM_prop_bdd4_5}
\begin{gathered}
\int_V~ \omega_v\ dy =  \l|U\r| =\l|u(U)\r| \l(\frac{\l|u(U)\r|}{\l|U\r|}\r)^{-1}  = \l|V\r|  \ .
\end{gathered}
\end{equation}
Set $$\delta :=  C_0
\l(\frac{\l|u(U)\r|}{\l|U \r|}\r)^{p-\frac{p}{n}} m^{3p-2} \l(\frac{d}{\lambda}\r)^p \l(\frac{\Lambda}{\lambda}\r)^{2p-2}
\int_U~ \l|1 - \frac{1}{\omega_v\l(v(x)\r)} \r|^p\ dx$$ where $C_0:=C_0(n,p)$ specified in Step (7). Note $\l( \frac{\l|u(U)\r|}{\l|U \r|} \r) \omega_u\l(u(x)\r) = \omega_v\l(v(x)\r)$ for all $x \in U$. Therefore it suffices to show that
\begin{equation} \begin{gathered} \label{OTM_prop_eqn1}
\int_U~ \l|v - s \r|^p\ dx \leq \delta 
\end{gathered} \end{equation}
for some $s \in W^{1,\infty}\l(U,\R^n\r)$ measure preserving. For $y_0 \in V$, we can replace $v$ with $v-v(y_0)$ and we can replace $s$ with $s+v(y_0)$ in (\hreff{OTM_prop_eqn1}). Therefore we can assume that $0 \in V$.

Note $\delta = 0$ if and only if $\omega_v(v(x)) = 1$ for a.e. $x \in U$. If $\delta=0$, then set $s=v$. Otherwise, we can assume that $\delta >0$. 

\textbf{(2)} Assume $n=1$. Note $\lambda \leq \det Du(x)$ for a.e. $x \in U$ means that $\lambda \leq u'(x)$ for a.e. $x \in U$. We have 
\begin{gather*}
u(y) = u(x) +\int_x^y u'(s)\ ds \\ \geq u(x) + \lambda(y-x)
\end{gather*}
for all $x < y$ in $U$. This shows that  $u:U \to \R$ is injective. Therefore $m=1$ and $\frac{1}{\omega_v(v(x))} = v'(x)$ for a.e. $x \in U$. We have  
\begin{gather*}
\int_U~ \l|v - (x+c) \r|^p\ dx \underset{(\hreff{BACK_fact})}{\leq} \l|U\r|^p \int_U~ \l|1-v' \r|^p\ dx \\ \underset{(\hreff{OTM_prop_bdd4_5})}{=} \l|V\r|^p \int_U~ \l|1-\frac{1}{\omega_v(v)} \r|^p\ dx = \diam^p V \int_U~ \l|1-\frac{1}{\omega_v(v)} \r|^p\ dx
\end{gather*}
for $c=\fint_U v- x\ dx$. Note (\hreff{OTM_prop_bdd1}) implies that $\l(\frac{\l|u(U)\r|}{\l|U\r|}\r)^p \frac{1}{\lambda^p} \geq \frac{1}{m^p} = 1$. Since $\lambda \leq \Lambda$, we have 
\begin{gather*}
 \diam^p V = \l(\frac{\l|u(U)\r|}{\l|U\r|}\r)^{-\frac{p}{n}} d^p \\ \leq \l(\frac{\l|u(U)\r|}{\l|U\r|}\r)^{-\frac{p}{n}} d^p \l(\frac{\l|u(U)\r|}{\l|U\r|}\r)^p \frac{1}{\lambda^p} = \l(\frac{\l|u(U)\r|}{\l|U\r|}\r)^{p-\frac{p}{n}} \l(\frac{d}{\lambda}\r)^p \\ \leq \l(\frac{\l|u(U)\r|}{\l|U\r|}\r)^{p-\frac{p}{n}} \l( 1 \r)^{3p-2} \l(\frac{d}{\lambda}\r)^p  \l(\frac{\Lambda}{\lambda}\r)^{2p-2}   
\end{gather*}
Since $1< C_0$, we have $\int_U~ \l|v - (x+c) \r|^p\ dx \leq \delta$. This shows (\hreff{OTM_prop_eqn1}) for $n=1$.

\textbf{(3)} Assume $n>1$. Note that $\omega_v \mathcal{H}^n \llcorner V$ and $\mathcal{H}^n \llcorner V$ are compactly supported. For $N \subset V$ with $\l|N\r| =0$, we have $\int_N~ \omega_v\ dy \underset{(\hreff{OTM_prop_bdd7})}{=}0$. Note that $\R^n \ni x \to \l|x \r|^p \in \R$ is strictly convex because $1<p<\infty$. Therefore there exists $T:\R^n \to \R^n$ measurable with $T_{\#}\l(\omega_v \mathcal{H}^n \llcorner V \r) = \mathcal{H}^n \llcorner V$ and $$\int_V~\l|y - T(y) \r|^p\ \omega_v(y)dy = W_p^p\l(\omega_v \mathcal{H}^n \llcorner V, \mathcal{H}^n \llcorner V \r)$$ \cite[p.~141]{AGS}. The composition of $v$ and $T$ is defined a.e. in $U$. Set $\widetilde{s}=T\circ v$. 
Note that $\widetilde{s}_{\#}\l(\mathcal{H}^n \llcorner U \r) = T_{\#}\l( \omega_v \mathcal{H}^n \llcorner U \r) =\mathcal{H}^n \llcorner U$. Therefore $\widetilde{s}:U \to \R^n$ is measure preserving. We have
\begin{equation} \begin{gathered} \label{OTM_prop_eqn2}
 \int_U~\l|v - \widetilde{s}\r|^p\ dx =  \int_U~\l|v - T \circ v\r|^p\ dx \\ = \int_V~\l|y - T(y)\r|^p\ \omega_v(y)dy = W_p^p\l(\omega_v \mathcal{H}^n \llcorner V, \mathcal{H}^n \llcorner V \r)
\end{gathered} \end{equation}

\textbf{(4)} We bound (\hreff{OTM_prop_eqn2}) in Step (6) using an estimate of Benamou-Brenier. Step (4) and Step (5) are needed to apply the estimate of Benamou-Brenier.  

Set $\dot{W}^{1,2}\l(B_{2 \diam V}(0)\r)= \l\lb \psi \in W^{1,2}\l(B_{2 \diam V}(0)\r) : \int_{B_{2 \diam V}(0)}~ \psi \ dy = 0 \r\rb$. Note that $\dot{W}^{1,2}\l(B_{2 \diam V}(0)\r)$ is a Hilbert space with inner product $$\int_{B_{2 \diam V}(0)}~ \psi~ \eta \ dy + \int_{B_{2 \diam V}(0)}~ D\psi \cdot D\eta \ dy \ .$$ For $\psi \in \dot{W}^{1,2}\l(B_{2 \diam V}(0)\r)$ we have $$\int_{B_{2 \diam V}(0)}~\l|\psi\r|^2\ dy \leq 2^{2n+2} \diam^2 V \int_{B_{2 \diam V}(0)}~ \l|D\psi\r|^2\ dy$$ because $\fint_{B_{2 \diam V}(0)} \psi\ dx = 0$ \cite[p.~164]{GT}. This implies that the pairing $$\dot{W}^{1,2}\l(B_{2 \diam V}(0)\r) \times \dot{W}^{1,2}\l(B_{2 \diam V}(0)\r) \ni (\psi,\eta) \to \int_{B_{2 \diam V}(0)}~ D\psi \cdot D\eta\ dx \in \R$$ is coercive. Note that the pairing is continuous. The map $$\dot{W}^{1,2}\l(B_{2 \diam V}(0)\r) \ni \psi \to \int_V~ (1-\omega_v) \psi\ dx \in \R$$ is bounded and linear because $\int_V~\l|1-\omega_v\r|^2\ dx \leq \l|V\r| \l|1 + \frac{m\Lambda}{\lambda} \r|^2 < \infty$ by (\hreff{OTM_prop_bdd2}). Therefore Lax-Milgram implies the existence of a unique $\phi \in \dot{W}^{1,2}\l(B_{2 \diam V}(0)\r)$ such that $$\int_V~ (1-\omega_v) \eta\ dx = \int_{B_{2 \diam V}(0)}~ D\phi \cdot D\eta\ dx$$ for all $\eta \in \dot{W}^{1,2}\l(B_{2 \diam V}(0)\r)$. Note that $\int_V~ 1-\omega_v\ dy \underset{(\hreff{OTM_prop_bdd4_5})}{=} 0$ implies 
\begin{equation} \begin{gathered} \label{OTM_prop_cont_eqn}
 \int_V~ \l(1-\omega_v\r) \eta\ dx = \int_V~ \l(1-\omega_v \r)\l(\eta - \fint_{B_{2 \diam V}(0)} \eta\ dy \r)\ dx \\ = \int_{B_{2 \diam V}(0)}~ D\phi \cdot D\l(\eta - \fint_{B_{2 \diam V}(0)} \eta\ dy\r)\ dx \\= \int_{B_{2 \diam V}(0)}~ D\phi \cdot D\eta \ dx 
\end{gathered} \end{equation}
for all $\eta \in W^{1,2}\l(B_{2 \diam V}(0)\r)$. 

Recall that $\omega_v(y) = 0$ for $y \not\in V$ by Definition \hreff{OTM_defn_measure_pushforward}, and $\omega_v \in L^{\infty}\l(V\r)$ by (\hreff{OTM_prop_bdd2}). This implies that $\l(1-\omega_v\r) \mathbf{1}_V \in L^{\infty}(B_{2 \diam V}(0))$. For any $1<r<\infty$, the Neumann problem 
\begin{gather*}
\begin{cases}
 -\Delta \psi = \l(1-\omega_v\r) \mathbf{1}_V & \text{ in } B_{2 \diam V}(0) \\ \frac{\partial \psi}{\partial n} = 0 & \text{ on } \partial B_{2 \diam V}(0) \\ \psi \in W^{1,r}\l(B_{2 \diam V}(0)\r) & \text{ with } \int_{B_{2 \diam V}(0)}~\psi~dy = 0
\end{cases}
\end{gather*}
has a unique weak solution \cite[p.~2149]{GS}. Here we use $n>1$. Therefore (\hreff{OTM_prop_cont_eqn}) implies that $\phi \in W^{1,p}(B_{2 \diam V}(0))$. Set $\widetilde{V}:= \frac{1}{2 \diam V} V$, $$\widetilde{\phi} : B_1\l(0\r) \ni y \to  \phi\l(2\diam V \cdot y\r) \in \R$$ and $$\widetilde{\omega_v}(y) : \R^n \ni y \to  \omega_v\l(2\diam V \cdot y\r) \in \R  \ .$$ Note that 
\begin{equation} \label{OTM_prop_bdd2_5}
\begin{gathered}
\int_{B_1\l(0\r)} D\widetilde{\phi}\cdot D\eta\ dx \underset{(\hreff{OTM_prop_cont_eqn})}{=} \l( 2\diam V \r)^2~\int_{\widetilde{V}}\l(1-\widetilde{\omega_v}\r) \eta\ dx 
\end{gathered}
\end{equation}
for all $\eta \in W^{1,2}\l(B_1\l(0\r)\r)$. Observe that for any $z \in L^q\l(B_1\l(0\r),\R^n\r)$, there exists $Z \in L^q\l(B_1\l(0\r),\R^n\r)$ and $\zeta \in W^{1,q}\l(B_1\l(0\r)\r)$ such that $z = Z + D\zeta$ with 
\begin{equation} \begin{gathered} \label{OTM_prop_constant}
\l|\l|D\zeta\r|\r|_{L^q\l(B_1\l(0\r)\r)} \leq C_1 \l|\l|z\r|\r|_{L^q\l(B_1\l(0\r)\r)}~,~ \int_{B_1\l(0\r)} \zeta\ dy = 0 
\end{gathered} \end{equation}
for constant $C_1:=C_1(n,p)$ and $$\int_{B_1\l(0\r)} Z \cdot D\eta\ dy = 0$$ for all $\eta \in W^{1,p}\l(B_1\l(0\r)\r)$ \cite[p.~2150]{GS}. Here we use $n>1$. For any $\epsilon >0$, we have
\begin{gather*}
 \l|\l|D\widetilde{\phi}\r|\r|_{L^p\l(B_1\l(0\r)\r)} = \underset{\l\lb z \in L^q\l(B_1\l(0\r),\R^n\r)~:~\l|\l|z\r|\r|_{L^q} \leq 1 \r\rb}{\sup} \int_{B_1\l(0\r)} D\widetilde{\phi} \cdot z\ dy \\ = \underset{\l\lb z \in L^q\l(B_1\l(0\r),\R^n\r)~:~\l|\l|z\r|\r|_{L^q} \leq 1 \r\rb}{\sup} \int_{B_1\l(0\r)} D\widetilde{\phi} \cdot \l(Z + D\zeta\r)\ dy \\ = \underset{\l\lb z \in L^q\l(B_1\l(0\r),\R^n\r)~:~\l|\l|z\r|\r|_{L^q} \leq 1 \r\rb}{\sup} \int_{B_1\l(0\r)} D\widetilde{\phi} \cdot D\zeta\ dy 
 \end{gather*}
 \begin{gather*}
 \underset{\text{(\hreff{OTM_prop_constant})}}{\leq} \underset{\l\lb \zeta \in W^{1,q}\l(B_1\l(0\r)\r)~:~\l|\l|D\zeta\r|\r|_{L^q\l(B_1\l(0\r)\r)} \leq C_1 \r\rb}{\sup} \int_{B_1\l(0\r)} D\widetilde{\phi} \cdot D\zeta\ dy \\ \underset{\text{\cite[p.~127]{EG}}}{\leq}  \underset{\l\lb \zeta \in C^{\infty}\l(\overline{B_1}\l(0\r)\r)~:~ \l|\l|D\zeta\r|\r|_{L^q\l(B_1\l(0\r)\r)} \leq C_1 + \epsilon \r\rb}{\sup} \int_{B_1\l(0\r)} D\widetilde{\phi} \cdot D\zeta\ dy \\ \underset{\text{(\hreff{OTM_prop_bdd2_5})}}{=} \l(2\diam V\r)^2 \underset{\l\lb \zeta \in C^{\infty}\l(\overline{B_1}\l(0\r)\r) ~:~\l|\l|D\zeta\r|\r|_{L^q\l(B_1\l(0\r)\r)} \leq C_1 + \epsilon \r\rb}{\sup} \int_{\widetilde{V}} \l(1-\widetilde{\omega_v}\r) \zeta\ dy \\ \leq \l(2\diam V\r)^2 \underset{\l\lb \zeta \in C^{\infty}\l(\overline{B_1}\l(0\r)\r)  ~:~\l|\l|D\zeta\r|\r|_{L^q\l(B_1\l(0\r)\r)} \leq C_1 + \epsilon \r\rb}{\sup} \l|\l|1-\widetilde{\omega_v}\r|\r|_{L^p\l(\widetilde{V}\r)} \l|\l|\zeta\r|\r|_{L^q\l(B_1\l(0\r)\r)} \\ \underset{(\hreff{OTM_prop_constant})}{=} \l(2\diam V\r)^2 \underset{\l\lb \zeta \in C^{\infty}\l(\overline{B_1}\l(0\r)\r) ~:~ \l|\l|D\zeta\r|\r|_{L^q\l(B_1\l(0\r)\r)} \leq C_1 + \epsilon \r\rb}{\sup} \l|\l|1-\widetilde{\omega_v}\r|\r|_{L^p\l(\widetilde{V}\r)} \l|\l|\zeta- \fint_{B_1\l(0\r)}\zeta dy \r|\r|_{L^q\l(B_1\l(0\r)\r)} \\ \underset{\text{\cite[p.~164]{GT}}}{\leq} \l(2\diam V\r)^2 \l|\l|1-\widetilde{\omega_v}\r|\r|_{L^p\l(\widetilde{V}\r)} \underset{\l\lb \zeta \in C^{\infty}\l(\overline{B_1}\l(0\r)\r)~:~\l|\l|D\zeta\r|\r|_{L^q\l(B_1\l(0\r)\r)} \leq C_1 + \epsilon \r\rb}{\sup} 2^n \l|\l|D\zeta\r|\r|_{L^q\l(B_1\l(0\r)\r)} \\ \leq 2^n\l(2\diam V\r)^2\l(C_1 + \epsilon\r) \l|\l|1-\widetilde{\omega_v}\r|\r|_{L^p\l(\widetilde{V}\r)}
\end{gather*}
Therefore  
\begin{equation} \label{OTM_prop_bdd3_5}
\begin{gathered}
\l|\l|D\phi\r|\r|_{L^p(B_{2\diam V}\l(0\r))} = \frac{1}{2 \diam V}\l|\l|D\widetilde{\phi}\r|\r|_{L^p\l(B_1\l(0\r)\r)} \\ \leq 2^n\l(C_1 + \epsilon\r) \frac{\l(2\diam V\r)^2}{2 \diam V} \l|\l|1-\widetilde{\omega_v}\r|\r|_{L^p\l(\widetilde{V}\r)} \\ =  2^n \l(C_1 + \epsilon\r) \l(2\diam V\r) \l|\l|1-\omega_v\r|\r|_{L^p(V)}
\end{gathered}
\end{equation}
Taking $\epsilon \to 0$ shows $\l|\l|D\phi\r|\r|_{L^p(B_{2\diam V}\l(0\r))} \leq 2^{n+1} C_1 \diam V \l|\l|1-\omega_v\r|\r|_{L^p(V)}$. 

\textbf{(5)} Set $$\rho(y,t):=\begin{cases} (1-t)\omega_v(y) + t & \text{ in } V \times [0,1] \\ 1 & \text{ in } \l(B_{2 \diam V}(0)-  V\r) \times [0,1] \\ 0 & \text{ in } \l(\R^n - B_{2 \diam V}(0)\r) \times [0,1]  \end{cases}$$
$\mu_t:= \rho(\cdot,t)~ \mathcal{H}^n$, and 
$$w(y,t):= \begin{cases} \frac{D\phi(x)}{\rho(x,t)} & \text{ in } B_{2 \diam V}(0) \times [0,1] \\ 0 & \text{ in } \l(\R^n - B_{2 \diam V}(0)\r) \times [0,1] \end{cases} \ .$$ Since $1\leq m$ and $\lambda \leq \Lambda$, (\hreff{OTM_prop_bdd2}) implies
\begin{equation}
\begin{gathered} \label{OTM_prop_bdd3}
 \frac{\lambda}{m\Lambda} = \operatorname{min}\l\lb 1,\frac{\lambda}{m\Lambda} \r\rb \leq  \rho \leq \operatorname{max}\l\lb 1, \frac{m\Lambda}{\lambda} \r\rb = \frac{m\Lambda}{\lambda}
\end{gathered} \end{equation}
off a set $N \times [0,1] \subset B_{2\diam V}(0) \times [0,1]$ with $\l|N\r|=0$. This implies that 
\begin{gather*}
\int_{\R^n} \l|x\r|^p\ \mu_t(dx) \leq  \mu_t\l(B_{2 \diam V}(0)\r) \l( 2\diam V\r)^p \\ \leq  2^{p+n} \l|B_1(0)\r| ~ \frac{m\Lambda}{\lambda} \diam^{p+n} V < \infty 
\end{gather*}
for all $t \in [0,1]$. Note that for any $\eta \in C^0\l(\R^n\r) \cap L^{\infty}\l(\R^n\r)$, we have 
\begin{gather*}
\l|\int_{\R^n}\eta(x)\ \mu_t(dx) - \int_{\R^n}\eta(x)\ \mu_s(dx)\r| \leq \int_V~\l|\eta(x)\l(\rho(x,s) - \rho(x,t)\r) \r|\ dx \\ \leq \int_V~ \l|\eta(x)\l(s-t\r)\l(1-\omega_v(x)\r)\r|\ dx \\ \underset{(\hreff{OTM_prop_bdd3})}{\leq} \l(1 + \frac{m\Lambda}{\lambda}\r)\l|t-s\r| \int_V~ \l|\eta(x)\r|\ dx 
\end{gather*}
for $s,t \in [0,1]$. Therefore $$[0,1] \ni t \to \frac{1}{\mu_t\l(B_{2 \diam V}(0) \r)}~\mu_t \in \mathcal{P}_p$$ is continuous with respect to integration against continuous, bounded functions. Note that $w: \R^n \to \R^n$ is measurable. We have
\begin{equation} \begin{gathered} \label{OTM_prop_bdd4}
\int_0^1 \int_{\R^n} \l|w(y,t)\r|^p \rho(y,t)\ dydt \underset{(\hreff{OTM_prop_bdd3})}{\leq} \l(\frac{m\Lambda}{\lambda}\r)^{p-1} \int_0^1 \int_{B_{2 \diam V}(0)} \l|D\phi(y)\r|^p\ dydt \\ = \l(\frac{m\Lambda}{\lambda}\r)^{p-1} \int_{B_{2 \diam V}(0)} \l|D\phi(y)\r|^p\ dy \\ \underset{(\hreff{OTM_prop_bdd3_5})}{\leq} 2^{pn+p}~C_1^p ~\diam^p V ~\l(\frac{m\Lambda}{\lambda}\r)^{p-1} \int_{V} \l|1-\omega_v\r|^p\ dy 
\end{gathered} \end{equation}
Set $C_2:=2^{pn+p} ~ C_1^p$. Note $C_2:=C_2(n,p)$. Observe 
\begin{gather*}
\frac{d}{dt} \rho(y,t) = \begin{cases} 1-\omega_v & \text{ in } V \times [0,1] \\ 0 & \text{ in } \l(\R^n - V \r) \times [0,1]  \end{cases} \ . 
\end{gather*}
Since $\rho(y,t) = 0$ for $y \not\in B_{2\diam V}(0)$, integration by parts gives  
\begin{equation} \begin{gathered} \label{OTM_prop_cont_eqn2}
 \int_0^1\int_{\R^n}~\l(\eta_t + w \cdot D\eta \r)\ \mu_t\l(dy\r)dt = \int_0^1 \int_{B_{2\diam V}(0)}\l(\eta_t + w\cdot D\eta\r)\rho(y,t)\ dydt \\ =-\int_0^1 \int_{B_{2\diam V}(0)}~\eta~\rho_t(y,t)\ dydt + \int_0^1 \int_{B_{2\diam V}(0)}~w\cdot D\eta~\rho(y,t)\ dydt \\ =\int_0^1\int_{V}~\l(\omega_v -1\r)\eta\ dydt + \int_0^1\int_{B_{2\diam V}(0)} w \cdot D\eta~ \rho(y,t) \ dydt \\ = \int_0^1\l(\int_{V}~\l(\omega_v -1\r)\eta~dy + \int_{B_{2\diam V}(0)} D\phi \cdot D\eta\ dy\r)dt  \underset{\text{(\hreff{OTM_prop_cont_eqn})}}{=} 0 
\end{gathered} \end{equation}
for all $\eta \in C^{\infty}_{\operatorname{cpt}}\l(\R^n \times (0,1) \r)$. In summary, we have $(0,1) \ni t \to \frac{1}{\mu_t\l(B_{2\diam V}(0) \r)}~\mu_t \in \mathcal{P}_p$ is continuous with $w: \R^n \times (0,1) \to \R^n$ satisfying $$\int_0^1 \int_{\R^n} \l|w\r|^p\ \mu_t(dy)dt < \infty $$ by (\hreff{OTM_prop_bdd4}) and 
\begin{equation} \label{OTM_prop_cont_eqn_ref}
\frac{d}{dt} \mu_t + \div\l(\mu_t w\r) = 0 
\end{equation}
in a weak sense by (\hreff{OTM_prop_cont_eqn2}). This implies that $$W_p\l(\mu_{r},\mu_{1-r}\r) \leq \int_{r}^{1-r} \l(\int_{\R^n}\l|w\r|^p \mu_t(dy) \r)^{\frac{1}{p}}\ dt$$ for all $0<r<\frac{1}{2}$ \cite[p.~183]{AGS}. Since $\underset{r \to 0}{\lim}~ \mu_{r} = \omega_v \mathcal{H}^n \llcorner V$ and $\underset{r \to 0}{\lim}~ \mu_{1-r} = \mathcal{H}^n \llcorner V$ with respect to integration against continuous, bounded functions, we have $$W_p\l(\omega_v \mathcal{H}^n \llcorner V, \mathcal{H}^n \llcorner V\r) \leq \underset{r\to 0}{\operatorname{liminf}}~ W_p\l(\mu_{r},\mu_{1-r}\r)$$ \cite[p.~153]{AGS}. Therefore 
\begin{equation} \begin{gathered} \label{OTM_prop_eqn3}
W_p\l(\omega_v \mathcal{H}^n \llcorner V, \mathcal{H}^n \llcorner V \r) \leq \int_0^1 \l(\int_{\R^n} \l|w\r|^p\ \mu_t(dy) \r)^{\frac{1}{p}}dt 
\end{gathered} \end{equation} 
\textbf{(6)} Note that 
\begin{equation} \begin{gathered} \label{OTM_prop_Muckenhoupt}
 \int_V~\l|1-\omega_v\r|^p\ dy = \int_V~ \frac{\omega_v^{p-1}}{\omega_v^{p-1}} \l|1-\omega_v\r|^p\ dy \\ \underset{\text{(\hreff{OTM_prop_bdd2})}}{\leq} \l(\frac{m\Lambda}{\lambda}\r)^{p-1} \int_V~ \l|1 - \frac{1}{\omega_v} \r|^p\ \omega_vdy \\ =\l(\frac{m\Lambda}{\lambda}\r)^{p-1} \int_U~ \l|1-\frac{1}{\omega_v\l(v(x)\r)}\r|^p\ dx
\end{gathered} \end{equation}
This implies that  
\begin{equation} \label{OTM_prop_bdd8}
\begin{gathered}
 \int_0^1 \int_{\R^n} \l|w\r|^p\ \mu_t(dy)dt \underset{\text{(\hreff{OTM_prop_bdd4})}}{\leq} C_2 \diam^p V \l(\frac{m\Lambda}{\lambda}\r)^{p-1} \int_V~\l|1-\omega_v\r|^p\ dy \\ = C_2 \l(\frac{\l|u(U)\r|}{\l|U\r|} \r)^{-\frac{p}{n}} d^p \l(\frac{m\Lambda}{\lambda}\r)^{p-1} \int_V~\l|1-\omega_v\r|^p\ dy \\ \underset{\text{(\hreff{OTM_prop_Muckenhoupt})}}{\leq} C_2\l(\frac{\l|u(U)\r|}{\l|U\r|} \r)^{-\frac{p}{n}} d^p \l(\frac{m\Lambda}{\lambda}\r)^{2p-2} \int_U~\l|1-\frac{1}{\omega_v\l(v(x)\r)}\r|^p\ dx \\ \underset{\text{(\hreff{OTM_prop_bdd1})}}{\leq} C_2\l(\frac{\l|u(U)\r|}{\l|U\r|} \r)^{p-\frac{p}{n}} \l(\frac{m}{\lambda}\r)^p d^p \l(\frac{m\Lambda}{\lambda}\r)^{2p-2} \int_U~\l|1-\frac{1}{\omega_v\l(v(x)\r)}\r|^p\ dx  
\end{gathered}
\end{equation}
Therefore
\begin{equation} \begin{gathered} \label{OTM_prop_eqn4}
\int_U~\l|v - \widetilde{s}\r|^p\ dx \underset{(\hreff{OTM_prop_eqn2})}{=}   W_p^p\l(\omega_v \mathcal{H}^n \llcorner V, \mathcal{H}^n \llcorner V \r) \\ \underset{(\hreff{OTM_prop_eqn3})}{\leq} \l(\int_0^1 \l(\int_{\R^n} \l|w\r|^p \mu_t(dy) \r)^{\frac{1}{p}}dt \r)^p \\ \leq \int_0^1 \int_{\R^n} \l|w\r|^p \mu_t(dy)dt \\ \underset{(\hreff{OTM_prop_bdd8})}{\leq}  C_2\l(\frac{\l|u(U)\r|}{\l|U\r|} \r)^{p-\frac{p}{n}} m^{3p-2} \l(\frac{d}{\lambda}\r)^p \l(\frac{\Lambda}{\lambda}\r)^{2p-2} \int_U~\l|1-\frac{1}{\omega_v\l(v(x)\r)}\r|^p\ dx
\end{gathered} \end{equation}

\textbf{(7)} Set $C_0:=1+2^{p+1}C_2$. Note $C_0:=C_0(n,p)$. By Lemma \hreff{OTM_approx_lemma} there exists $s \in C^{\infty}_{\operatorname{diff}}\l(\R^n,\R^n\r)$ with $\det Ds \equiv 1$ such that $\int_U~\l|s - \widetilde{s}\r|^p\ dx \leq \frac{1}{2^{p+1}}\delta$. Here we use $1<n$ and $0<\delta$. We have \begin{gather*}
\int_U~\l|v - s\r|^p\ dx \leq 2^p \int_U~\l|v - \widetilde{s} \r|^p\ dx + 2^p \int_U~\l|s-\widetilde{s}\r|^p\ dy \\ \underset{(\hreff{OTM_prop_eqn4})}{\leq} 2^p \l( \frac{1}{2^{p+1}}\delta \r) + 2^p \int_U~\l|s-\widetilde{s}\r|^p\ dy \\ \leq \frac{1}{2} \delta + 2^p \l(\frac{1}{2^{p+1}}\delta\r) = \delta                                                                                                                                                                                                                                                                                                                                                                                  \end{gather*}
This shows (\hreff{OTM_prop_eqn1}) for $n>1$. 
\end{proof} 

\begin{corollary} \label{OTM_cor}
Let $1<p<\infty$. Take $u \in W^{1,\infty}\l(U,\R^n\r)$ essentially injective with $\operatorname{diam}\l(u(U)\r) \leq d$ and $0 < \lambda \leq \det Du(x) \leq \Lambda$ for a.e. $x \in U$. There exists $S \in W^{1,\infty}\l(U,\R^{n}\r)$ essentially injective, measure preserving such that 
\begin{gather*}
\int_U~ \l|u - S \r|^p\ dx \leq C \int_U~ \l|1 - \det Du \r|^p\ dx 
\end{gather*}
where $C=C_3 \l(\frac{d}{\lambda^{\frac{1}{n}}}\r)^p\l(1 + \frac{1}{\lambda^p} \l(\frac{\Lambda}{\lambda}\r)^{2p-2}\r)$ for a constant $C_3:=C_3(n,p)$.
\end{corollary}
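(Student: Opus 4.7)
The plan is to reduce the corollary to Proposition~\hreff{OTM_prop} by an isotropic rescaling and to bridge the resulting scaling discrepancy with a triangle inequality that exploits the pointwise control on $u$.

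First, by translating $u$ by a constant, I can arrange $0 \in u(U)$, so that $|u(x)| \leq d$ pointwise; both sides of the desired estimate are invariant under joint translation of $u$ and $S$, so this is harmless. Set $\alpha := \l(|u(U)|/|U|\r)^{1/n}$. Since $u$ is essentially injective, the area formula yields $\alpha^n = \fint_U \det Du\ dx$, and combined with $\lambda \leq \det Du \leq \Lambda$ this gives $\lambda^{1/n} \leq \alpha \leq \Lambda^{1/n}$. Put $v := u/\alpha$. Then $v$ is essentially injective, $|v(U)| = |U|$, and crucially the pointwise bound $|v(x)| \leq d/\alpha \leq d/\lambda^{1/n}$ holds.

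Second, apply Proposition~\hreff{OTM_prop} to $v$. Essential injectivity gives $\mult_U(v) = 1$ and $1/\omega_v(v(x)) = \det Dv(x) = \det Du(x)/\alpha^n$, and $\l(|v(U)|/|U|\r)^{1/n} = 1$, so the conclusion provides an $s \in C^{\infty}_{\operatorname{diff}}(\R^n,\R^n)$ with $\det Ds \equiv 1$ (essentially injective and measure preserving) such that
\begin{equation*}
\int_U \l|v - s\r|^p\ dx \leq C_0 \l(\frac{d\alpha^{n-1}}{\lambda}\r)^p \l(\frac{\Lambda}{\lambda}\r)^{2p-2} \int_U \l|1 - \frac{\det Du}{\alpha^n}\r|^p\ dx.
\end{equation*}
A triangle inequality $|\alpha^n - \det Du| \leq |\alpha^n - 1| + |1 - \det Du|$ together with Jensen applied to $|\alpha^n - 1|^p = |\fint_U(\det Du - 1)|^p$ bounds the RHS integrand by $2^p \alpha^{-np} \int_U |1 - \det Du|^p\ dx$. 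Using $\alpha \geq \lambda^{1/n}$ then yields
\begin{equation*}
\int_U \l|v - s\r|^p\ dx \leq C_1 \l(\frac{d}{\lambda^{1/n}}\r)^p \frac{1}{\lambda^p} \l(\frac{\Lambda}{\lambda}\r)^{2p-2} \int_U \l|1 - \det Du\r|^p\ dx,
\end{equation*}
which matches the second term of the target constant.

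Third, take $S := s$ (which lies in $W^{1,\infty}(U,\R^n)$, is essentially injective, and is measure preserving). Write $\int_U |u - S|^p\ dx = \int_U |\alpha v - s|^p\ dx$ and use the decomposition $\alpha v - s = (\alpha - 1)v + (v - s)$ to get
\begin{equation*}
\int_U \l|u - S\r|^p\ dx \leq 2^{p-1}(\alpha - 1)^p \int_U \l|v\r|^p\ dx + 2^{p-1} \int_U \l|v - s\r|^p\ dx.
\end{equation*}
The second integral was bounded in the previous step. For the first, the pointwise bound $|v| \leq d/\lambda^{1/n}$ gives $\int_U |v|^p\ dx \leq |U|(d/\lambda^{1/n})^p$. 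Writing $\alpha^n - 1 = (\alpha - 1) \sum_{k=0}^{n-1}\alpha^k$ and using $\alpha \geq \lambda^{1/n}$ yields $(\alpha - 1)^p \leq C(n,p) \max(\lambda^{-(n-1)p/n},1) |\alpha^n - 1|^p$, and Jensen gives $|\alpha^n - 1|^p \leq |U|^{-1} \int_U |1 - \det Du|^p\ dx$. Combining, the first term is controlled by $C_2 (d/\lambda^{1/n})^p (1 + \lambda^{-p}(\Lambda/\lambda)^{2p-2}) \int_U |1 - \det Du|^p\ dx$, where the $\lambda < 1$ case is absorbed into the second summand of the stated constant (using $(\Lambda/\lambda)^{2p-2} \geq 1$). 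Adding the two pieces yields the claim, and un-translating recovers the original $u$ and $S$.

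The main subtlety is the decomposition chosen in the third step. The natural splitting $\alpha v - s = \alpha(v - s) + (\alpha - 1)s$ would require a pointwise bound on $|s|$, attainable only by tracing through the construction in Lemma~\hreff{OTM_approx_lemma} and carefully choosing the approximation parameter $\epsilon$, a delicate task that risks introducing $\diam U$ into the constant. Splitting as $(\alpha - 1)v + (v - s)$ replaces $|s|$ with $|v|$, for which the pointwise bound $|v| \leq d/\lambda^{1/n}$ is immediate from translating $u$ and from $\alpha \geq \lambda^{1/n}$.
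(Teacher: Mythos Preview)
Your proof is correct and follows essentially the same route as the paper. The paper applies Proposition~\ref{OTM_prop} directly to $u$ and splits $\alpha(\tilde u - s) = (\tilde u - \alpha s) - (1-\alpha)\tilde u$ before dividing by $\alpha^{p}$ at the end; your splitting $u - s = (\alpha-1)v + (v-s)$ is the same identity divided through by $\alpha$ (with $\tilde u = \alpha v$), and your ``main subtlety'' of bounding $|v|$ rather than $|s|$ is precisely the paper's choice to bound $|\tilde u|$. Two cosmetic points: first, $\sum_{k=0}^{n-1}\alpha^k \geq 1$ for $\alpha>0$ gives $|\alpha-1|\leq|\alpha^n-1|$ directly, so your $\max(\lambda^{-(n-1)p/n},1)$ factor is unnecessary; second, Proposition~\ref{OTM_prop} yields $s\in C^\infty_{\mathrm{diff}}$ only when $\delta>0$, and returns $s=v$ when $\delta=0$, but in that case $v$ is already essentially injective and measure preserving so the conclusion still holds.
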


\begin{proof}
Following Step (1) of Proposition \hreff{OTM_prop}, we have 
\begin{gather*}
 u^{-1}(y) = v^{-1}\l( \l(\frac{\l|u(U)\r|}{\l|U\r|} \r)^{-\frac{1}{n}}~ y \r) \underset{(\hreff{OTM_prop_bdd2})}{=} P_v\l( \l(\frac{\l|u(U)\r|}{\l|U\r|} \r)^{-\frac{1}{n}}~ y \r) = P_u(y) 
\end{gather*}
for a.e. $y \in u(U)$. Note $\mathcal{H}^0\l( u^{-1}(y) \r) =1$ for a.e. $y \in u(U)$ by essential injectivity. For $N \subset u(U)$ with $\l|N\r|=0$, we have
\begin{gather*}
 \l|u^{-1}\l(N\r)\r| = \l|v^{-1}\l( \l(\frac{\l|u(U)\r|}{\l|U\r|} \r)^{-\frac{1}{n}}  N \r)\r| \\ = \int_{\l(\frac{\l|u(U)\r|}{\l|U\r|} \r)^{-\frac{1}{n}}  N}~ \omega_v(y)~ dy \\ \underset{(\hreff{OTM_prop_bdd2})}{\leq}  \int_{\l(\frac{\l|u(U)\r|}{\l|U\r|} \r)^{-\frac{1}{n}}  N}~\frac{(1) \Lambda}{\lambda}~dy =\frac{(1) \Lambda}{\lambda} ~ \l|\l(\frac{\l|u(U)\r|}{\l|U\r|} \r)^{-\frac{1}{n}}  N\r| = 0 
\end{gather*}
Therefore 
\begin{gather*}
\frac{1}{\omega_u\l(u(x)\r)} = \l( \sum_{z \in P_u\l(u(x)\r)} \frac{1}{\det Du(z)} \r)^{-1} =  \l( \sum_{z \in u^{-1}\l(u(x)\r)} \frac{1}{\det Du(z)}\r)^{-1} \\ = \l( \frac{1}{\det Du(x)} \r)^{-1} = \det Du(x)
\end{gather*}
for a.e. $x \in U$. Observe that
\begin{equation} \label{OTM_cor_eqn}
\begin{gathered}
 \int_U~ \l|1- \l( \frac{\l|u(U)\r|}{\l|U\r|} \r)^{\frac{1}{n}} \r|^p\ dx \\ \leq \int_U~ \l|1- \l( \frac{\l|u(U)\r|}{\l|U\r|} \r)^{\frac{1}{n}} \r|^p \l|1+ \l( \frac{\l|u(U)\r|}{\l|U\r|} \r)^{\frac{1}{n}} + \ldots + \l( \frac{\l|u(U)\r|}{\l|U\r|} \r)^{\frac{n-1}{n}} \r|^p\ dx \\ = \int_U~ \l|1 -\frac{\l|u(U)\r|}{\l|U\r|}  \r|^p\ dx \\ \underset{\text{\cite[p.~99]{EG}}}{=} \int_U~\l|1 - \fint_U \det Du(y)\ dy \r|^p\ dx = \int_U~\l|\fint_U 1- \det Du(y) \ dy \r|^p\ dx \\ \leq \int_U~ \fint_U \l|1- \det Du(y)\r|^p\ dydx = \int_U~\l|1-\det Du(y) \r|^p\ dy  
\end{gathered}
\end{equation}
Take $s \in W^{1,\infty}\l(U,\R^n\r)$ from Proposition \hreff{OTM_prop}. Recall that for $\delta = 0$ we have $s=u$, and for $\delta >0$ we have $s \in C^{\infty}_{\operatorname{diff}}\l(\R^n,\R^n\r)$. Therefore $s$ is essentially injective. Take $x_0 \in U$. Set $S:=s+u(x_0)$ and $\widetilde{u} := u - u(x_0)$. Note that $\l|\l| \widetilde{u} \r|\r|_{L^{\infty}} \leq d$, $\l|u(U)\r|= \l|U\r|$ and $D \widetilde{u} = Du$. We have 
\begin{gather*}
\l(\frac{\l|u(U)\r|}{\l|U  \r|} \r)^{\frac{p}{n}} \int_U~ \l|u - S \r|^p\ dx = \l(\frac{\l|u(U)\r|}{\l|U  \r|} \r)^{\frac{p}{n}} \int_U~ \l|\widetilde{u} - s \r|^p\ dx \\ \leq 2^p\int_U~ \l|\widetilde{u} -    
\l(\frac{\l|u(U)\r|}{\l|U  \r|} \r)^{\frac{1}{n}} s \r|^p\ dx + 2^p\int_U~ \l|\widetilde{u} -    
\l(\frac{\l|u(U)\r|}{\l|U  \r|} \r)^{\frac{1}{n}} \widetilde{u} \r|^p\ dx 
\end{gather*}
\begin{gather*}
\underset{\text{Proposition \hreff{OTM_prop}}}{\leq} 2^p~C_0~(1)^{3p-2}\l(\frac{d}{\lambda}\r)^p \l(\frac{\Lambda}{\lambda}\r)^{2p-2} \int_U~ \l|\frac{\l|u(U)\r|}{\l|U  \r|} - \det D\widetilde{u} \r|^p\ dx + \\ + 2^p\l|\l| \widetilde{u} \r|\r|^p_{L^{\infty}} \int_U~ \l|1 - \l(\frac{\l|u(U)\r|}{\l|U  \r|} \r)^{\frac{1}{n}} \r|^p\ dx \\ \leq  2^{2p}~C_0~\l(\frac{d}{\lambda}\r)^p \l(\frac{\Lambda}{\lambda}\r)^{2p-2}\int_U~ \l|\frac{\l|u(U)\r|}{\l|U  \r|} - 1 \r|^p\ dx +\\ + 2^{2p}~C_0~\l(\frac{d}{\lambda}\r)^p \l(\frac{\Lambda}{\lambda}\r)^{2p-2} \int_U~ \l|1 - \det D\widetilde{u} \r|^p\ dx + 2^p~d^p \int_U~ \l|1 - \l(\frac{\l|u(U)\r|}{\l|U  \r|} \r)^{\frac{1}{n}} \r|^p\ dx \\
\underset{(\hreff{OTM_cor_eqn})}{\leq} 2^{2p+1}~C_0~\l(\frac{d}{\lambda}\r)^p \l(\frac{\Lambda}{\lambda}\r)^{2p-2} \int_U~ \l|1 - \det Du \r|\ dx + 2^p~d^p \int_U~ \l|1 - \det Du \r|\ dx 
\end{gather*}
Note that (\hreff{OTM_prop_bdd1}) implies $\l(\frac{\l|u(U)\r|}{\l|U  \r|} \r)^{-\frac{p}{n}} \leq \frac{1}{\lambda^{\frac{p}{n}}}$. Divide by $\l(\frac{\l|u(U)\r|}{\l|U  \r|} \r)^{\frac{p}{n}}$ to obtain
\begin{gather*}
\int_U~\l|u - S \r|^p\ dx \leq \l(\frac{\l|u(U)\r|}{\l|U  \r|} \r)^{-\frac{p}{n}} \l(2^{2p+1}~C_0\l(\frac{d}{\lambda}\r)^p \l(\frac{\Lambda}{\lambda}\r)^{2p-2} + 2^pd^p\r) \int_U~ \l|1 - \det Du \r|^p\ dx \\ \leq \l(\frac{d}{\lambda^{\frac{1}{n}}}\r)^p\l(2^{2p+1}~C_0\frac{1}{\lambda^p} \l(\frac{\Lambda}{\lambda}\r)^{2p-2} + 2^p\r) \int_U~ \l|1 - \det Du \r|^p\ dx 
\end{gather*}  
Set $C_3:=2^{2p+1}~C_0 + 2^p$. Note $C_3:=C_3(n,p)$.
\end{proof}
Note (\hreff{OTM_prop_eqn3}) bounds the cost of transporting $\omega_v~ \mathcal{H}^n \llcorner V$ to $\mathcal{H}^n \llcorner V$ by  the cumulative cost of transporting $\mu_0$ to $\mu_1$ through small changes. For small changes, we can relate Wasserstein distance to $H^{-1}$ norm. Indeed, $$W_2\l(\mu_t,\mu_{t+\epsilon} \r) \approx \l|\l|D_{\mathcal{H}^n}\l(\mu_t-\mu_{t+\epsilon}\r) \r|\r|_{H^{-1}\l(\mu_t\r)}$$ for $0< \epsilon \ll 1$ \cite[p.~234]{V}. While we can relate $\mu_t$ and $\mu_{t+\epsilon}$ for arbitrary $\epsilon$, the relation is through an inequality \cite[p.~211]{Santambrogio}. Note $$C_0 \underset{p\to 1}{\to} \infty ~\text{ and }~ C_3 \underset{p\to 1}{\to} \infty \ .$$ Therefore the approach to Proposition \hreff{OTM_prop} and Corollary \hreff{OTM_cor} does not extend to $p=1$. However, we have 
\begin{equation} \label{OTM_prop_p1}
m^{3p-2}~\l(\frac{d}{\lambda}\r)^p~ \l(\frac{\Lambda}{\lambda} \r)^{2p-2} \underset{p=1}{=} \frac{md}{\lambda} 
\end{equation}
and 
\begin{equation} \label{OTM_cor_p1}
\l(\frac{d}{\lambda^{\frac{1}{n}}}\r)^p\l(1 + \frac{1}{\lambda^p} \l(\frac{\Lambda}{\lambda}\r)^{2p-2}\r) \underset{p=1}{=} \frac{d}{\lambda^{\frac{1}{n}}} \l(1 + \frac{1}{\lambda} \r)   
\end{equation}
We use a different approach for $p=1$ obtaining constant (\hreff{OTM_prop_p1}) in Proposition \hreff{OTM_prop_1} and constant (\hreff{OTM_cor_p1}) in Corollary \hreff{OTM_cor_1}. Neither constant depends on $\Lambda$. However, we use an upper bound on the determinant from Hadamard's inequality $\det Du(x) \underset{\forall x \in U}{\leq} \l|\l|Du \r|\r|^n_{L^{\infty}\l(U,\R^n\r)}$. 

\begin{proposition} \label{OTM_prop_1}
Let $u \in W^{1,\infty}\l(U,\R^n\r)$ with $\operatorname{diam}\l(u(U)\r) \leq d$, $\mult_U(u) \leq m$, and $0< \lambda \leq \det Du(x)$ for a.e. $x \in U$. There exists $s \in W^{1,\infty}\l(U,\R^{n}\r)$ measure preserving such that $$\int_U~ \l|u - \l(\frac{\l|u(U)\r|}{\l|U\r|}\r)^{\frac{1}{n}}s \r|\ dx \leq C\int_U~ \l|\frac{\l|u(U)\r|}{\l|U\r|} - \frac{1}{\omega_u\l(u(x)\r)} \r|\ dx$$ where $C=5 \frac{m d}{\lambda}$.
\end{proposition}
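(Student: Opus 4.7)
The plan is to retrace Proposition \hreff{OTM_prop}, keeping Step (1), Step (2), Step (3), and Step (7) essentially unchanged, and replacing the elliptic/Benamou-Brenier argument of Step (4)--(6) with the Kantorovich--Rubinstein duality, which gives a direct $W_{1}$ bound avoiding the constants that blow up as $p \to 1$.

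First I would perform the normalization of Step (1): set $v := (|u(U)|/|U|)^{-1/n}\,u$ and $V := v(U)$, verify the analogues of (\hreff{OTM_prop_bdd1}), (\hreff{OTM_prop_bdd1_5}), (\hreff{OTM_prop_bdd2}), (\hreff{OTM_prop_bdd7}), and (\hreff{OTM_prop_bdd4_5}), and reduce the proposition to producing $s \in W^{1,\infty}(U,\R^n)$ measure preserving with
$$\int_U |v-s|\,dx \;\leq\; \delta_1 \;:=\; 5\,\l(\tfrac{|u(U)|}{|U|}\r)^{1-\frac{1}{n}}\tfrac{m d}{\lambda}\int_U \l|1-\tfrac{1}{\omega_v(v(x))}\r|dx.$$
As in Step (2), the case $n=1$ follows from (\hreff{BACK_fact}) (the diameter bound $\diam V \le (m/\lambda)^{1/n}d$ coming from (\hreff{OTM_prop_bdd1})), and as in the opening of Step (3) I may assume $\delta_1 > 0$ and $n > 1$.

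For $n > 1$, choose an optimal transport map $T:V\to V$ with $T_{\#}(\omega_v\mathcal{H}^n\llcorner V) = \mathcal{H}^n\llcorner V$ and
$$\int_V |y-T(y)|\,\omega_v(y)\,dy \;=\; W_1(\omega_v\mathcal{H}^n\llcorner V,\,\mathcal{H}^n\llcorner V),$$
which exists by \cite[p.~141]{AGS}. Setting $\widetilde s := T\circ v$, the computation of Step (3) shows $\widetilde s$ is measure preserving and $\int_U |v-\widetilde s|\,dx = W_1(\omega_v\mathcal{H}^n\llcorner V,\mathcal{H}^n\llcorner V)$. Here is where the $p=1$ simplification enters: by Kantorovich--Rubinstein duality, since both measures have the same total mass $|V|$ (by (\hreff{OTM_prop_bdd4_5})) and are supported in $V$, for any $1$-Lipschitz $f$ one can subtract $f(y_0)$ with $y_0\in V$ to get $|f|\le \diam V$ on $V$, hence
$$W_1(\omega_v\mathcal{H}^n\llcorner V,\mathcal{H}^n\llcorner V) \;\leq\; \diam V \cdot \int_V |1-\omega_v|\,dy.$$
Then the elementary identity $|1-\omega_v| = \omega_v\,|1-1/\omega_v|$ (the $p=1$ instance of (\hreff{OTM_prop_Muckenhoupt})) combined with the change of variables $\int_V \omega_v f\,dy = \int_U f\circ v\,dx$ yields
$$\int_U |v-\widetilde s|\,dx \;\leq\; \diam V\,\int_U \l|1-\tfrac{1}{\omega_v(v(x))}\r|dx.$$

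Finally, I would apply Lemma \hreff{OTM_approx_lemma} (using $n>1$ and $\delta_1 > 0$) to obtain $s\in C^{\infty}_{\mathrm{diff}}(\R^n,\R^n)$ measure preserving with $\int_U |s-\widetilde s|\,dx \leq \delta_1/2$, so that $\int_U |v-s|\,dx \leq \diam V\cdot \int_U |1-1/\omega_v(v(x))|dx + \delta_1/2$. The constant tracking uses $\diam V = (|u(U)|/|U|)^{-1/n} d$ together with (\hreff{OTM_prop_bdd1}), gives $\diam V \leq (m/\lambda)^{1/n} d$, and after undoing the normalization (multiplying by $(|u(U)|/|U|)^{1/n}$ and rewriting $\omega_v(v) = (|u(U)|/|U|)\omega_u(u)$) produces the claimed constant $5md/\lambda$. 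The main obstacle is purely bookkeeping: making sure the exponents of $|u(U)|/|U|$ combine as $(1/n)+(-1/n)-1+1 = 0$ with the $(m/\lambda)^{1/n}$ from $\diam V$ and the $m/\lambda$ from $(|u(U)|/|U|)^{-1}$ assembling into $md/\lambda$; there is no genuinely new analytic difficulty, the point being only that Kantorovich--Rubinstein removes the Lax--Milgram/Neumann-problem constants $C_1, C_2$ that diverge as $p\to 1$.
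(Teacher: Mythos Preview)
Your approach is correct and arrives at the same $W_1$ bound as the paper, but by a more direct route. The paper first Brenier-decomposes $v = D\psi \circ s_1$, then approximates $\omega_v$ by compactly supported $\omega_v^{(\epsilon)}$, introduces a second Brenier map $D\psi^{(\epsilon)}$, and only then invokes a $W_1$ optimal map $T^{(\epsilon)}$ (from \cite[p.~38]{Ambrosio}) together with a coupling-comparison bound $W_1(\mu^{(\epsilon)},\nu) \le \int |y-y_0|\,|\mu^{(\epsilon)}-\nu|(dy)$; the final measure-preserving map is $s_2^{(\epsilon)}\circ s_1$ after a limiting argument. You bypass all of this by composing a $W_1$ optimal map directly with $v$ and bounding via Kantorovich--Rubinstein, which is cleaner and yields the same constant. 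What the paper's detour buys is only that the hypotheses of the cited $W_1$-Monge theorem are checked very explicitly (closed convex ambient set, compactly supported a.c.\ source).

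One genuine correction: your citation \cite[p.~141]{AGS} is for strictly convex costs and does \emph{not} give an optimal transport \emph{map} when $p=1$; this is precisely why Proposition~\hreff{OTM_prop} restricts to $1<p<\infty$ at that step. For the Euclidean cost $|x-y|$ you need the solution of the Monge problem for absolutely continuous sources (Sudakov/Ambrosio/Evans--Gangbo), which the paper invokes as \cite[p.~38]{Ambrosio}. Since $\omega_v\,\mathcal{H}^n\llcorner V$ is absolutely continuous with bounded density, that result applies and your $T$ exists; just replace the reference.
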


\begin{proof}
Step (1) rephrases the estimate. Step (2) treats $n=1$. Step (3) through Step (7) treat $n>1$. Throughout $W_1$ denotes Wasserstein distance \cite[p.~151]{AGS}.

\textbf{(1)} Note that $u \in W^{1,\infty}\l(U,\R^n\r)$ and $0 < \lambda \leq \det Du(x)$ for a.e. $x \in U$ imply that $K\l(Du(x)\r) \leq \frac{\l|\l|Du\r|\r|_{L^{\infty}}^n}{\lambda}$ for a.e. $x \in U$. Therefore $u(U) \subset \R^n$ is open \cite[p.~43]{HK}. We have that $u(U)$ is open, bounded, connected with $\diam u(U) >0$ and $\l|u(U)\r| >0$. Note that
\begin{equation} \label{OTM_prop_1_bdd1}
\begin{gathered} 
\frac{\lambda}{m} \l|U\r| \leq \int_U~ \frac{1}{m} \det Du\ dx \leq \l|u(U)\r|
\end{gathered} 
\end{equation}
\cite[p.~99]{EG}. Take $v:=\l(\frac{\l|u(U)\r|}{\l|U \r|}\r)^{-\frac{1}{n}} u$. Note that $\operatorname{mult}_U(v)=\operatorname{mult}_U(u) \leq m$. Note 
\begin{equation} \label{OTM_prop_1_bdd4}
\l(\frac{\l|u(U)\r|}{\l|U \r|}\r)^{-1} \lambda \leq \det Dv(x) \leq \l(\frac{\l|u(U)\r|}{\l|U \r|}\r)^{-1} \l|\l| Du \r|\r|^n_{L^{\infty}}
\end{equation}
Take $V:=v(U)$. We have that $V$ is open, bounded, connected with $0< \diam V = \l(\frac{\l|u(U)\r|}{\l|U \r|}\r)^{-\frac{1}{n}} d$ and $\l|V\r| >0$. For $N \subset U$ with $\l|N\r| =0$, we have $v(N) \subset V$ measurable with $\l|v(N)\r| \leq \l|\l|Dv\r|\r|_{L^{\infty}}^n \l|N\r| = 0$ \cite[p.~92]{EG}. Taking $N:=\l\lb x \in U~:~ (\hreff{OTM_prop_1_bdd4}) \text{ does not hold at } x \r\rb$ shows that $P_v(y) = v^{-1}(y)$ for a.e. $y \in V$. This implies that $0< \mathcal{H}^0\l( P_v(y) \r) \leq m$ for a.e. $y \in V$. By (\hreff{OTM_prop_1_bdd4}), we have
\begin{equation} \label{OTM_prop_1_bound}
\begin{gathered}
\frac{\l|u(U)\r|}{\l| U\r|} \frac{1}{\l|\l|Du\r|\r|^n_{L^{\infty}}} \leq \omega_v(y) := \sum_{x \in P_v(y)} \frac{1}{\det Dv(x)}  \leq  \frac{m}{\lambda} \frac{\l|u(U)\r|}{\l| U\r|} 
\end{gathered}
\end{equation}
for a.e. $y \in V$. Note $v_{\#}\l(\mathcal{H}^n \llcorner U \r) \ll \mathcal{H}^n \llcorner V$ with density $\omega_v$ \cite[p.~81, p.~99]{EG}. This implies that
\begin{equation} \label{OTM_prop_1_bdd3}
\begin{gathered}
\l|v^{-1}\l(N\r)\r|=\int_N~ \omega_v\ dy \\ \underset{(\hreff{OTM_prop_1_bound})}{\leq} \int_N \frac{m}{\lambda} \frac{\l|u(U)\r|}{\l| U\r|} \ dy = \frac{m}{\lambda} \frac{\l|u(U)\r|}{\l| U\r|}~\l|N\r| = 0  
\end{gathered}
\end{equation}
for all $N \subset V$ with $\l|N\r|=0$. Therefore $\omega_v(v) \in L^{\infty}\l(U\r)$ and $\frac{1}{\omega_v(v)} \in L^{\infty}\l(U\r)$. Note \begin{equation} \label{OTM_prop_1_bdd2}
\begin{gathered}
\int_V~ \omega_v\ dy =  \l|U\r| =\l|u(U)\r| \l(\frac{\l|u(U)\r|}{\l|U\r|}\r)^{-1}  = \l|V\r|  
\end{gathered}
\end{equation}
Set $\delta :=  5 \diam V
\int_U~ \l|1 - \frac{1}{\omega_v\l(v(x)\r)} \r|\ dx$. Note $\l( \frac{\l|u(U)\r|}{\l|U \r|} \r) \omega_u\l(u(x)\r) = \omega_v\l(v(x)\r)$ for all $x \in U$. Note    
\begin{gather*}
5 \diam V = 5 d \l(\frac{\l|u(U)\r|}{\l|U \r|}\r)^{-\frac{1}{n}} \underset{(\hreff{OTM_prop_1_bdd1})}{\leq}  5 \frac{m d}{\lambda} \l(\frac{\l|u(U)\r|}{\l|U \r|}\r)^{1-\frac{1}{n}}
\end{gather*}
Therefore it suffices to show 
\begin{equation} \label{OTM_prop_1_eqn1}
\begin{gathered} 
\int_U~ \l|v - s \r|\ dx \leq \delta 
\end{gathered} 
\end{equation}
for some $s \in W^{1,\infty}\l(U,\R^n\r)$ measure preserving. 

Note $\delta = 0$ if and only if $\omega_v(v(x)) = 1$ for a.e. $x \in U$. If $\delta=0$, then set $s=v$. Otherwise, we can assume that $\delta >0$.

\textbf{(2)} Assume $n=1$. Note $\lambda \leq \det Du(x)$ for a.e. $x \in U$ means that $\lambda \leq u'(x)$ for a.e. $x \in U$. We have 
\begin{gather*}
u(y) = u(x) +\int_x^y u'(s)\ ds \\ \geq u(x) + \lambda(y-x)
\end{gather*}
for all $x < y$ in $U$. This shows that  $u:U \to \R$ is injective. Therefore $m=1$ and $\frac{1}{\omega_v(v(x))} = v'(x)$ for a.e. $x \in U$. We have  
\begin{gather*}
\int_U~ \l|v - (x+c) \r|\ dx \underset{(\hreff{BACK_fact})}{\leq} \l|U\r| \int_U~ \l|1-v' \r|\ dx \\ \underset{(\hreff{OTM_prop_1_bdd2})}{=} \l|V\r| \int_U~ \l|1-v' \r|\ dx  = \diam V \int_U~ \l|1-v' \r|\ dx \\ = \diam V \int_U~ \l|1-\frac{1}{\omega_v(v)} \r|\ dx
\end{gather*}
for $c=\fint_U v- x\ dx$. Therefore $\int_U~ \l|v - (x+c) \r|\ dx \leq \frac{1}{5} \delta$. This shows (\hreff{OTM_prop_1_eqn1}) for $n=1$.

\textbf{(3)} Assume $n>1$. Note $\mathcal{H}^n \llcorner U$ and $\omega_v~ \mathcal{H}^n \llcorner V$ are compactly supported. For $N \subset V$ with $\l|N\r| =0$, we have $\int_N~ \omega_v\ dy \underset{(\hreff{OTM_prop_1_bdd3})}{=} 0$. Therefore there exists $\psi: \R^n \to \R \DS \cup \infty$ convex with $D\psi(U) \subset \overline{V}$ and $s_1: U \to U$ measure preserving with $v(x) = D\psi \circ s_1(x)$ for a.e. $x \in U$ \cite[p.~119]{V}. Note that $D\psi_{\#}\l(\mathcal{H}^n \llcorner U \r) \ll \mathcal{H}^n \llcorner V$ with density $\omega_v$. We show that   
\begin{equation} \begin{gathered} \label{OTM_prop_1_eqn2}
 \int_U~\l|D\psi - s_2\r|\ dx \leq \frac{4}{5} \delta
\end{gathered} \end{equation}
for some $s_2: U \to \R^n$ measure preserving. We use (\hreff{OTM_prop_1_eqn2}) in Step (7) to deduce (\hreff{OTM_prop_1_eqn1}).
 
\textbf{(4)} Recall that $\omega_v(y) = 0$ for $y \not\in V$ by Definition \hreff{OTM_defn_measure_pushforward}. Set $V^{\epsilon} := \l\lb y \in V : \dist(y,\R^n - V) \geq \epsilon \r\rb$ and $$\omega_v^{(\epsilon)}(y):=\begin{cases} \l(\frac{1}{\l|V\r|} \int_{V^{\epsilon}} \omega_v\ dy\r)^{-1} \omega_v(y)& \text{ for } y \in V^{\epsilon} \\ 0 & \text{ for } y \in \R^n - V^{\epsilon}  \end{cases}\ .$$ Note $\frac{1}{2} \leq \frac{1}{\l|V\r|} \int_{V^{\epsilon}} \omega_v\ dy$ for $0<\epsilon \ll 1$ with $\frac{1}{\l|V\r|} \int_{V^{\epsilon}} \omega_v\ dy \underset{\epsilon \to 0}{\nearrow} 1$.  By (\hreff{OTM_prop_1_bound}) this implies that 
\begin{equation} \begin{gathered} \label{OTM_prop_1_bound2}
\frac{\l|u(U)\r|}{\l| U\r|} \frac{1}{\l|\l|Du \r|\r|_{L^{\infty}}^n} \leq  \omega_v^{(\epsilon)}(y) \leq  2\frac{m}{\lambda} \frac{\l|u(U)\r|}{\l| U\r|} 
\end{gathered} \end{equation}
for a.e. $y \in V^{\epsilon}$ with $\epsilon \ll 1$. Note  
\begin{equation} \label{OTM_prop_1_bdd7}
\begin{gathered}
\int_{V^{\epsilon}} \l|\omega_v^{(\epsilon)} - \omega_v \r|\ dx \underset{\epsilon \to 0}{\to} 0 \ . 
\end{gathered}
\end{equation}
This implies that for any $\eta \in C^0(\R^n) \cap L^{\infty}(\R^n)$, we have 
\begin{gather*}
 \l|\int_{\R^n} \omega_v^{(\epsilon)} \eta \ dx - \int_{\R^n} \omega_v \eta \ dx \r| \leq  \int_{V^{\epsilon}} \l| \omega_v^{(\epsilon)} - \omega_v \r| \l|\eta\r| \ dx + \int_{V-V^{\epsilon}} \l|  \eta ~\omega_v \r|\ dx \\ \underset{(\hreff{OTM_prop_1_bound2})}{\leq}      
 \l|\l|\eta\r|\r|_{L^{\infty}\l(\R^n \r)} \int_{V^{\epsilon}} \l| \omega_v^{(\epsilon)} - \omega_v \r| \ dx + 2 \frac{m}{\lambda} \frac{\l|u(U)\r|}{\l|U\r|}~ \l|\l|\eta\r|\r|_{L^{\infty}\l(\R^n \r)}~ \l|V-V^{\epsilon}\r| \underset{\epsilon \to 0}{\to} 0
\end{gather*}
Therefore $\omega_v^{(\epsilon)}~ \mathcal{H}^n \underset{\epsilon \to 0}{\to} \omega_v~ \mathcal{H}^n$ with respect to integration against bounded, continuous functions. 

Take $y_0 \in V$. Note $V^{\epsilon} \subset \overline{B}_{2 \diam V}\l(y_0\r)$ for any $\epsilon > 0$. Set $\mu^{(\epsilon)} := \omega^{(\epsilon)}_v ~\mathcal{H}^n \llcorner \overline{B}_{2 \diam V}\l(y_0\r)$ and $\nu := \mathbf{1}_V~ \mathcal{H}^n \llcorner \overline{B}_{2 \diam V}\l(y_0\r)$. Note that $\overline{B}_{2 \diam V}\l(y_0\r)$ is a closed convex set with \newline $\mu^{(\epsilon)}\l( \overline{B}_{2 \diam V}\l(y_0\r) \r) = \nu \l(\overline{B}_{2 \diam V}\l(y_0\r)\r)$. Therefore there exists $T^{(\epsilon)}:\overline{B}_{2 \diam V}\l(y_0\r) \to \overline{B}_{2 \diam V}\l(y_0\r)$ defined $\mu^{(\epsilon)}$ a.e. such that $T^{(\epsilon)}_{\#} \mu^{(\epsilon)} = \nu$ and
\begin{equation} \label{OTM_prop_1_bdd6}
\int_{\overline{B}_{2 \diam V}\l(y_0\r)} \l|y -T^{\epsilon}(y) \r| \omega_v^{(\epsilon)}\ dy = W_1\l(\mu^{(\epsilon)},\nu \r) 
\end{equation}
\cite[p.~38]{Ambrosio}.

\textbf{(5)} Note $\mathcal{H}^n \llcorner U$ and $\mu^{(\epsilon)}$ are compactly supported. Since $V^{\epsilon}$ is closed, the support of $\mu^{(\epsilon)}$ is contained in $V^{\epsilon}$. For $N \subset V^{\epsilon}$ with $\l|N\r| =0$, we have 
\begin{equation} \label{OTM_prop_1_bdd5}
\begin{gathered}
\mu^{(\epsilon)}\l(N\r) = \int_N~ \omega^{(\epsilon)}_v\ dy \\ \underset{(\hreff{OTM_prop_1_bound2})}{\leq} \int_N~ 2 \frac{m}{\lambda} \frac{\l|u(U)\r|}{\l| U\r|} \ dy = 2\frac{m}{\lambda} \frac{\l|u(U)\r|}{\l| U\r|}~\l|N\r| = 0  
\end{gathered}
\end{equation}
Therefore there exists $\psi^{(\epsilon)}: \R^n \to \R ~\cup~ \infty$ convex with $D\psi^{(\epsilon)}\l(U\r) \subset V^{\epsilon}$ and $D\psi^{\epsilon}_{\#}\l(\mathcal{H}^n \llcorner U \r) = \mu^{(\epsilon)}$ \cite[p.~66]{V}. Note that for $N \subset V^{\epsilon}$ with $\l|N\r|=0$, we have $$\l|\l(D\psi^{(\epsilon)} \r)^{-1}\l(N\r) \r| = \mu^{(\epsilon)}\l(N\r) \underset{(\hreff{OTM_prop_1_bdd5})}{=} 0 \ .$$ Therefore the composition of $T^{(\epsilon)} \circ D\psi^{(\epsilon)}(x)$ is defined for a.e. $x \in U$. Set $s_2^{(\epsilon)}= T^{(\epsilon)} \circ D\psi^{(\epsilon)}$. Note that $\l(s_2^{(\epsilon)}\r)_{\#}\l(\mathcal{H}^n \llcorner U\r) = T^{(\epsilon)}_{\#} \mu^{(\epsilon)} = \nu$.  Therefore $s_2^{(\epsilon)}: U \to \R^n$ is measure preserving. Note 
\begin{equation} \label{OTM_prop_1_eqn3}
\begin{gathered}
 \int_U~\l|D\psi^{(\epsilon)} - s_2^{(\epsilon)}\r|\ dx = \int_U~\l|D\psi^{(\epsilon)} - T^{(\epsilon)} \circ D\psi^{(\epsilon)} \r|\ dx \\ =\int_{V^{\epsilon}} \l|y - T^{(\epsilon)}(y) \r|\omega_v^{(\epsilon)}(y)\ dy = \int_{\overline{B}_{2 \diam V}(y_0)} \l|y - T^{(\epsilon)}(y) \r|\omega_v^{(\epsilon)}(y)\ dy \\ \underset{(\hreff{OTM_prop_1_bdd6})}{=} W_1(\mu^{(\epsilon)},\nu) \\ \underset{\text{\cite[p.~211]{V}
}}{\leq} \int_{\overline{B}_{2\diam V}(y_0)}~ \l|y-y_0 \r| ~\l|\mu^{(\epsilon)} - \nu \r|(dy) \\ \leq \diam V \int_{V^{\epsilon}} ~\l|\mu^{(\epsilon)} - \nu \r|(dy) + \diam V \int_{V- V^{\epsilon}}~\l|\nu\r|(dy) \\ = \diam V \int_{V^{\epsilon}} \l|1- \omega_v^{(\epsilon)}(y) \r|\ dy + \diam V ~\l|V-V^{\epsilon}\r| \\ = \diam V \int_{V^{\epsilon}} \l|1- \frac{1}{\omega_v^{(\epsilon)}}(y) \r|~\omega_v^{(\epsilon)}(y)~ dy + \diam V ~\l|V-V^{\epsilon}\r| \\ = \diam V \int_U~\l|1 - \frac{1}{\omega_v^{(\epsilon)}\l(D\psi^{(\epsilon)}(x)\r) } \r|\ dx + \diam V ~\l|V-V^{\epsilon}\r|
\end{gathered}
\end{equation}
The bound on $W_1(\mu^{(\epsilon)},\nu)$ comes from comparison with the coupling that fixes \newline $\operatorname{min}\l\lb \mathbf{1}_V, \omega_v^{(\epsilon)} \r\rb \mathcal{H}^n \llcorner \overline{B}_{2\diam V}(y_0)$. Any optimal coupling must fix \newline $\operatorname{min}\l\lb \mathbf{1}_V, \omega_v^{(\epsilon)} \r\rb \mathcal{H}^n \llcorner \overline{B}_{2\diam V}(y_0)$ \cite[p.~35]{V}. 

\textbf{(6)} Consider $\epsilon= \frac{1}{k}$. Note that $\omega_v^{\l(\frac{1}{k}\r)} \mathcal{H}^n \underset{k \to \infty}{\to} \omega_v \mathcal{H}^n$ implies for any $t>0$ $$\l|\l\lb x \in \R^n : \l|D\psi^{(\frac{1}{k})}(x) - D\psi(x)\r| \geq t \r\rb\r| \underset{k \to \infty}{\to} 0$$ \cite[p.~71]{V}. Since $\l|\l| D\psi^{\l(\frac{1}{k}\r)} \r|\r|_{L^{\infty}\l(U,\R^n\r)} \leq |y_0| + 2 \diam V$ for all $k>0$, there exists a subsequence denoted by $\l\lb D\psi^{\l(\frac{1}{k}\r)} \r\rb_{k>0}$ such that $\int_U~\l|D\psi^{\l(\frac{1}{k}\r)} - D\psi \r|\ dx \underset{k\to \infty}{\to} 0$. Therefore there exists $k_1>0$ such that $$\int_U~\l|D\psi^{\l(\frac{1}{k}\r)} - D\psi \r|\ dx \leq \frac{1}{5}\delta$$ for all $k_1 \leq k$. By (\hreff{OTM_prop_1_bdd7}), there exists $k_2>0$ such that $$\diam V ~ \l|V-V^{\frac{1}{k}}\r| \leq \frac{1}{5} \delta$$ and $$\diam V\int_{V^{\frac{1}{k}}} \l|\omega_v^{\l(\frac{1}{k}\r)} - \omega_v \r|\ dx \leq \frac{1}{5}\delta$$ for all $k_2\leq k$. Set $k_0:=k_1+k_2$ and $s_2:=s_2^{\l(\frac{1}{k_0}\r)}$. We have 
\begin{equation} \label{OTM_prop_1_bdd8}
\begin{gathered}
 \int_U~\l|D\psi - s_2 \r|\ dx \leq \int_U~\l| D\psi - D\psi^{\l(\frac{1}{k_0}\r)} \r|\ dx + \int_U~\l|D\psi^{\l(\frac{1}{k_0}\r)} - s_2^{\l(\frac{1}{k_0}\r)}\r|\ dx \\ \leq \frac{1}{5}\delta + \int_U~\l|D\psi^{\l(\frac{1}{k_0}\r)} - s_2^{\l(\frac{1}{k_0}\r)}\r|\ dx \\ \underset{\text{\hreff{OTM_prop_1_eqn3}}}{\leq} \frac{1}{5}\delta + \diam V \int_U~ \l|1 - \frac{1}{\omega_v^{\l(\frac{1}{k_0}\r)}\l( D\psi^{\frac{1}{k_0}}(x) \r)} \r|\ dx + \diam V ~ \l|V-V^{\frac{1}{k_0}}\r| \\ = \frac{1}{5}\delta + \diam V \int_{V^{\frac{1}{k_0}}}\l|1 -\frac{1}{ \omega_v^{\l(\frac{1}{k_0}\r)}(y)} \r|~ \omega_v^{\l(\frac{1}{k_0}\r)}(y)\ dy + \diam V ~ \l|V-V^{\frac{1}{k_0}}\r| \\ \leq \frac{1}{5}\delta + \diam V \int_{V^{\frac{1}{k_0}}}\l|1 - \omega_v^{\l(\frac{1}{k_0}\r)}(y) \r|\ dy + \frac{1}{5}\delta \\ \leq \frac{2}{5}\delta + \diam V \int_{V^{\frac{1}{k_0}}}\l|1 - \omega_v(y)\r|\ dy  + \diam V \int_{V^{\frac{1}{k_0}}}\l|\omega_v(y) - \omega_v^{\l(\frac{1}{k_0}\r)}(y) \r|\ dy \\ \leq \frac{2}{5}\delta + \diam V\int_{V}\l|1 - \omega_v(y)\r|\ dy + \frac{1}{5}\delta = \frac{3}{5}\delta + \diam V\int_V~\l|1 - \frac{1}{\omega_v\l(y\r)} \r|~\omega_v\l(y\r)\ dy \\ = \frac{3}{5}\delta + \diam V\int_U~\l|1 - \frac{1}{\omega_v\l(D\psi(x)\r)} \r|\ dx \\ \underset{s_1 \text{ measure preserving }}{=} \frac{3}{5}\delta + \diam V\int_U~\l|1 - \frac{1}{\omega_v\l(D\psi \circ s_1(x)\r)} \r|\ dx \\  = \frac{3}{5}\delta + \diam V\int_U~\l|1 - \frac{1}{\omega_v\l(v(x)\r)} \r|\ dx  = \frac{3}{5}\delta + \frac{1}{5} \delta = \frac{4}{5} \delta
 \end{gathered}
\end{equation}
This shows (\hreff{OTM_prop_1_eqn2}).

\textbf{(7)} Note that $s_2 \circ s_1: U \to \R^n$ is measure preserving because $s_2:=s_2^{\l(\frac{1}{k_0}\r)}$ with $s_2^{\l(\frac{1}{k_0}\r)}$ measure preserving. By \hreff{OTM_approx_lemma} there exists $s \in C^{\infty}_{\operatorname{diff}}\l(\R^n, \R^n\r)$ with $\det Ds \equiv 1$ such that $\int_U~ \l|s- s_2 \circ s_1 \r|\ dx \leq \frac{1}{5} \delta$. Here we use $n>1$ and $\delta >0$. Therefore 
\begin{gather*}
 \int_U~ \l|v - s \r|\ dx = \int_U~ \l| D\psi \circ s_1 - s \r|\ dx \\ \leq \int_U~ \l| D\psi \circ s_1 - s_2 \circ s_1 \r|\ dx + \int_U~ \l| s_2 \circ s_1 - s \r|\ dx \\ \underset{s_1 \text{ measure preserving }}{=} \int_U~ \l| D\psi - s_2 \r|\ dx + \int_U~ \l| s_2 \circ s_1 - s \r|\ dx \\ \underset{(\hreff{OTM_prop_1_bdd8})}{\leq} \frac{4}{5}\delta + \frac{1}{5}\delta = \delta  
\end{gather*}
This shows (\hreff{OTM_prop_1_eqn1}) for $n>1$. 
\end{proof}

\begin{corollary} \label{OTM_cor_1}
Take $u \in W^{1,\infty}\l(U,\R^n\r)$ essentially injective with $\operatorname{diam}\l(u(U)\r) \leq d$ and $0 < \lambda \leq \det Du(x)$ for a.e. $x \in U$. There exists $S \in W^{1,\infty}\l(U,\R^{n}\r)$ essentially injective, measure preserving such that $$\int_U~ \l|u - S \r|\ dx \leq C \int_U~ \l|1 - \det Du \r|\ dx$$ where $C=10~\frac{d}{\lambda^{\frac{1}{n}}}~\l(1 + \frac{1}{\lambda}\r)$
\end{corollary}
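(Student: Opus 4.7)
The plan is to follow the proof of Corollary \hreff{OTM_cor} line by line, replacing Proposition \hreff{OTM_prop} with Proposition \hreff{OTM_prop_1} and simplifying constants for $p=1$. Essential injectivity forces $\mult_U(u)=1$, and following Step (1) of Proposition \hreff{OTM_prop_1} identifies $\frac{1}{\omega_u(u(x))}$ with $\det Du(x)$ for a.e.\ $x \in U$, exactly as in the $1<p<\infty$ setting.

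The first preparatory step is the $p=1$ analogue of (\hreff{OTM_cor_eqn}): I would show $\int_U \l|1 - \l(\tfrac{\l|u(U)\r|}{\l|U\r|}\r)^{1/n}\r| dx \leq \int_U |1-\det Du|\,dx$ by the telescoping factorization $1-a = (1-a^{1/n})(1+a^{1/n}+\cdots+a^{(n-1)/n})$ with $a = |u(U)|/|U|$, then using the area formula and essential injectivity to write $|u(U)| = \int_U \det Du\,dx$, and finishing with Jensen's inequality. This is exactly (\hreff{OTM_cor_eqn}) specialized to $p=1$ and uses no new ideas.

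Next I would invoke Proposition \hreff{OTM_prop_1} to produce $s \in W^{1,\infty}(U,\R^n)$ measure preserving. The dichotomy in its proof---either $s = v$ (which is injective when $n=1$) or $s \in C^{\infty}_{\operatorname{diff}}(\R^n,\R^n)$ with $\det Ds \equiv 1$---guarantees that $s$ is essentially injective. Pick $x_0 \in U$, set $S:=s+u(x_0)$ and $\widetilde{u}:=u-u(x_0)$, and split via the triangle inequality
\begin{equation*}
\l(\tfrac{\l|u(U)\r|}{\l|U\r|}\r)^{\frac{1}{n}} \int_U |u-S|\,dx \leq 2\int_U \l|\widetilde u - \l(\tfrac{\l|u(U)\r|}{\l|U\r|}\r)^{\frac{1}{n}} s\r| dx + 2\int_U |\widetilde u|\l|1 - \l(\tfrac{\l|u(U)\r|}{\l|U\r|}\r)^{\frac{1}{n}}\r| dx.
\end{equation*}
Proposition \hreff{OTM_prop_1} (with $m=1$) bounds the first integral by $(5d/\lambda)\int_U |1-\det Du|\,dx$; the preparatory estimate together with $\l|\l|\widetilde u\r|\r|_{L^{\infty}} \leq d$ bounds the second by $2d\int_U |1-\det Du|\,dx$.

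Finally I would divide through by $(|u(U)|/|U|)^{1/n}$ and apply the lower bound $\lambda \leq |u(U)|/|U|$ from (\hreff{OTM_prop_1_bdd1}) with $m=1$ to convert this factor into $\lambda^{-1/n}$. Collecting terms yields a constant of the form $2d\lambda^{-1/n}(1 + 5/\lambda)$, which is dominated by the stated $10 d \lambda^{-1/n}(1 + 1/\lambda)$. Because the argument is essentially a direct translation of Corollary \hreff{OTM_cor} to $p=1$, no step poses a real obstacle; the only point needing care is confirming essential injectivity of the measure-preserving $s$, which is immediate from the dichotomy above.
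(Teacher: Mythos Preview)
Your approach is essentially the same as the paper's. One slip to flag: Proposition~\hreff{OTM_prop_1} does not bound the first integral by $\tfrac{5d}{\lambda}\int_U|1-\det Du|\,dx$; it gives
\[
\int_U \bigl|\widetilde u - \bigl(\tfrac{|u(U)|}{|U|}\bigr)^{1/n} s\bigr|\,dx \leq \tfrac{5d}{\lambda}\int_U \Bigl|\tfrac{|u(U)|}{|U|} - \det Du\Bigr|\,dx,
\]
so you still need to split $\bigl|\tfrac{|u(U)|}{|U|}-\det Du\bigr|\le \bigl|\tfrac{|u(U)|}{|U|}-1\bigr| + |1-\det Du|$ and apply your preparatory estimate once more (this is exactly what the paper does). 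Once you include that extra step, your factor $2$ in the triangle inequality produces a constant $\tfrac{2d}{\lambda^{1/n}}(1+\tfrac{10}{\lambda})$, which is \emph{not} dominated by $10\tfrac{d}{\lambda^{1/n}}(1+\tfrac{1}{\lambda})$ for small $\lambda$. The remedy is trivial: for $p=1$ the triangle inequality $a|\widetilde u - s|\le |\widetilde u - as| + |1-a|\,|\widetilde u|$ needs no factor $2$, and dropping it (as the paper does) yields $\tfrac{d}{\lambda^{1/n}}(1+\tfrac{10}{\lambda})\le 10\tfrac{d}{\lambda^{1/n}}(1+\tfrac{1}{\lambda})$.
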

\begin{proof}
Following Step (1) of Proposition \hreff{OTM_prop_1}, we have 
\begin{gather*}
 u^{-1}(y) = v^{-1}\l( \l(\frac{\l|u(U)\r|}{\l|U\r|} \r)^{-\frac{1}{n}}~ y \r) \underset{(\hreff{OTM_prop_1_bound})}{=} P_v\l( \l(\frac{\l|u(U)\r|}{\l|U\r|} \r)^{-\frac{1}{n}}~ y \r) = P_u(y) 
\end{gather*}
for a.e. $y \in u(U)$. Note $\mathcal{H}^0\l( u^{-1}(y) \r) =1$ for a.e. $y \in u(U)$ by essential injectivity. For $N \subset u(U)$ with $\l|N\r|=0$, we have
\begin{gather*}
 \l|u^{-1}\l(N\r)\r| = \l|v^{-1}\l( \l(\frac{\l|u(U)\r|}{\l|U\r|} \r)^{-\frac{1}{n}}  N \r)\r| \\ = \int_{\l(\frac{\l|u(U)\r|}{\l|U\r|} \r)^{-\frac{1}{n}}  N}~ \omega_v(y)~ dy \\ \underset{(\hreff{OTM_prop_1_bound})}{\leq}  \int_{\l(\frac{\l|u(U)\r|}{\l|U\r|} \r)^{-\frac{1}{n}}  N}~\frac{1}{\lambda}\frac{\l|u\l(U\r)\r|}{\l|U\r|}~dy  =\frac{1}{\lambda}\frac{\l|u\l(U\r)\r|}{\l|U\r|} ~ \l|\l(\frac{\l|u(U)\r|}{\l|U\r|} \r)^{-\frac{1}{n}}  N\r| = 0 
\end{gather*}
Therefore 
\begin{gather*}
\frac{1}{\omega_u\l(u(x)\r)} = \l( \sum_{z \in P_u\l(u(x)\r)} \frac{1}{\det Du(z)} \r)^{-1} =  \l( \sum_{z \in u^{-1}\l(u(x)\r)} \frac{1}{\det Du(z)}\r)^{-1} \\ = \l( \frac{1}{\det Du(x)} \r)^{-1} = \det Du(x)
\end{gather*}
for a.e. $x \in U$. Observe that
\begin{equation} \label{OTM_cor_1_eqn}
\begin{gathered}
 \int_U~ \l|1- \l( \frac{\l|u(U)\r|}{\l|U\r|} \r)^{\frac{1}{n}} \r|\ dx \\ \leq \int_U~ \l|1- \l( \frac{\l|u(U)\r|}{\l|U\r|} \r)^{\frac{1}{n}} \r| \l|1+ \l( \frac{\l|u(U)\r|}{\l|U\r|} \r)^{\frac{1}{n}} + \ldots + \l( \frac{\l|u(U)\r|}{\l|U\r|} \r)^{\frac{n-1}{n}} \r|\ dx  \\ = \int_U~ \l|1 -\frac{\l|u(U)\r|}{\l|U\r|}  \r|\ dx \\ \underset{\text{\cite[p.~99]{EG}}}{=} \int_U~\l|1 - \fint_U \det Du(y)\ dy \r|\ dx = \int_U~\l|\fint_U 1- \det Du(y) \ dy \r|\ dx \\ \leq \int_U~ \fint_U \l|1- \det Du(y)\r|\ dydx = \int_U~\l|1-\det Du(y) \r|\ dy  
 \end{gathered}
\end{equation}
Take $s \in W^{1,\infty}\l(U,\R^n\r)$ from Proposition \hreff{OTM_prop_1}. Recall that for $\delta = 0$ we have $s=u$, and for $\delta >0$ we have $s \in C^{\infty}_{\operatorname{diff}}\l(\R^n,\R^n\r)$. Therefore $s$ is essentially injective. Take $x_0 \in U$. Set $S:=s+u(x_0)$ and $\widetilde{u} := u - u(x_0)$. Note that $\l|\l| \widetilde{u} \r|\r|_{L^{\infty}} \leq d$, $\l|u(U)\r| = \l|U\r|$, and $D \widetilde{u} = Du$. We have
\begin{gather*}
\l(\frac{\l|u(U)\r|}{\l|U  \r|} \r)^{\frac{1}{n}} \int_U~ \l|u - S \r|\ dx = \l(\frac{\l|u(U)\r|}{\l|U  \r|} \r)^{\frac{1}{n}} \int_U~ \l|\widetilde{u} - s \r|\ dx \\ \leq \int_U~ \l|\widetilde{u} -    
\l(\frac{\l|u(U)\r|}{\l|U  \r|} \r)^{\frac{1}{n}} s \r|\ dx + \int_U~ \l|\widetilde{u} -    
\l(\frac{\l|u(U)\r|}{\l|U  \r|} \r)^{\frac{1}{n}} \widetilde{u} \r|\ dx 
\end{gather*}
\begin{gather*}
\underset{\text{Proposition \hreff{OTM_prop_1}}}{\leq} 5\frac{d}{\lambda} \int_U~ \l|\frac{\l|u(U)\r|}{\l|U  \r|} - \det D\widetilde{u} \r|\ dx + \l|\l| \widetilde{u} \r|\r|_{L^{\infty}(U)} \int_U~ \l|1 - \l(\frac{\l|u(U)\r|}{\l|U  \r|} \r)^{\frac{1}{n}} \r|\ dx \\ \leq 5\frac{d}{\lambda} \int_U~ \l|\frac{\l|u(U)\r|}{\l|U  \r|} - 1 \r|\ dx + 5\frac{d}{\lambda} \int_U~ \l|1 - \det D\widetilde{u} \r|\ dx + d \int_U~ \l|1 - \l(\frac{\l|u(U)\r|}{\l|U  \r|} \r)^{\frac{1}{n}} \r|\ dx \\ \underset{(\hreff{OTM_cor_1_eqn})}{\leq} 5\frac{d}{\lambda} \int_U~ \l| 1 - \det Du \r|\ dx + 5\frac{d}{\lambda} \int_U~ \l|1 - \det Du \r|\ dx + d \int_U~ \l|1 - \det Du \r|\ dx \\ = \l(10 \frac{d}{\lambda} + d\r) \int_U~ \l|1 - \det Du \r|\ dx
\end{gather*}
Note that $\l(\frac{\l|u(U)\r|}{\l|U  \r|} \r)^{-\frac{1}{n}} \underset{(\hreff{OTM_prop_1_bdd1})}{\leq} \frac{1}{\lambda^{\frac{1}{n}}}$. Divide by $\l(\frac{\l|u(U)\r|}{\l|U  \r|} \r)^{\frac{1}{n}}$ to obtain
\begin{gather*}
\int_U~\l|u - S \r|\ dx \leq \l(\frac{\l|u(U)\r|}{\l|U  \r|} \r)^{-\frac{1}{n}} \l(10 \frac{d}{\lambda} + d\r) \int_U~ \l|1 - \det Du \r|\ dx \\ \leq \frac{d}{\lambda^{\frac{1}{n}}}\l(1 + \frac{10}{\lambda}\r) \int_U~ \l|1 - \det Du \r|\ dx \\ \leq 10 \frac{d}{\lambda^{\frac{1}{n}}}\l(1 + \frac{1}{\lambda}\r) \int_U~ \l|1 - \det Du \r|\ dx
\end{gather*}
\end{proof}

We can make several observations regarding the dependence on constants.

Example \hreff{OTM_example} (3) suggests restricting to maps in $W^{1,p}$ for $p>n$. Following \cite{K}, we take $p=\infty$. Note that the approaches to Proposition \hreff{OTM_prop} and Proposition \hreff{OTM_prop_1} rely on the change of variables formula \cite[p.~99]{EG}. This precludes the use of Holder space norms. 

The factor $\frac{d}{\lambda^{\frac{1}{n}}}$ from the constants in Corollary \hreff{OTM_cor} and Corollary \hreff{OTM_cor_1} should be compared to $K^{\frac{1}{n}}$ in Definition \hreff{OTM_defn_mult_K}. Estimates related to (\hreff{BACK_FJM}) can incorporate $K$ \cite[p.~60]{FZ}. However, we do not incorporate $K$ because the dependence on $\l|\l| Du \r|\r|_{L^{\infty}}$. 
For Brenier decomposition $u = D\psi \circ s$, we cannot relate $\l|\l| Du \r|\r|_{L^{\infty}}$ and $\l|\l| D\psi \r|\r|_{L^{\infty}}$ through a rearrangement inequality \cite[p.~109]{V}. Instead, we incorporate the bound on diameter because $\diam D\psi\l(U\r) = \diam u(U) \leq d$.      

We should relate (\hreff{OTM_prop_bdd3_5}) to Calder\'{o}n-Zygmund estimates. Indeed, we can compare the factor $\frac{\Lambda}{\lambda}$ from the constant in Proposition \hreff{OTM_prop} to a Muckenhoupt weight \cite[p.~35]{S}. 

Solutions to the Monge-Amp\`{e}re equation constructed from solutions to a continuity equation (\hreff{OTM_prop_cont_eqn_ref}) are standard \cite[p.~192]{Dacorogna2}. However, the estimates involve constants depending on the region, in particular, the regularity of the boundary. Here the region is $u(U)$, meaning any dependence on the region is a dependence on $u$. This precludes the use of estimates from the Monge-Amp\`{e}re literature \cite[p.~4]{RY}.

\vspace{-0.1cm}

\section{Matrix nearness problems} \label{MNP}

Note that $SL(n) := \l\lb A \in \operatorname{Mat}^{n\times n} : \det A =1 \r\rb$ is the collection of linear measure preserving maps. For $A \in \operatorname{Mat}^{n \times n}$, we can understand the deviation of $A$ from measure preserving as $\l|1-\det A\r|$ or $\dist\l(A,SL(n)\r)$. The main result of Section \hreff{MNP} is Corollary \hreff{MNP_cor}. It relates the different notions of deviation from measure preserving.   

\begin{definition} \label{MNP_defn_norm_sl_SL}
 For $A \in \operatorname{Mat}^{n \times n}$, set $||A|| := \underset{|v|=1}{\operatorname{sup}} \l|A~v\r|$. Observe that for $A \in \operatorname{Mat}^{n \times n}$, we have $||A|| \leq |A| \leq \sqrt{n}~ ||A||$.
\end{definition}

The nonlinear constraint determining $SL(n)$ leads to an unbounded collection of matrices that do not form a subspace of $\operatorname{Mat}^{n\times n}$. Therefore the matrix nearness problem for $SL(n)$ differs from the matrix nearness problem for other collections of matrices \cite{Higham}.  

\begin{example} \label{MNP_example}
Take $A := \begin{bmatrix} m & 0 \\ 0 & m^{-2} \end{bmatrix}$ for $m>3$. Set $B := \DS\frac{1}{\det^{1/n} A} A = \begin{bmatrix} m^{3/2} & 0 \\ 0 & m^{-3/2} \end{bmatrix}$ and $\widetilde{B} := \begin{bmatrix} m & 0 \\ 0 & m^{-1} \end{bmatrix}$. Note 
\begin{equation*}
|A-B| = |A| \cdot \l|1 - \frac{1}{\det^{1/n} A} \r|, \; \; |A-\widetilde{B}| = \frac{1}{||A^{-1}||} \l| 1 - \frac{1}{\det A} \r| 
\end{equation*}
We have $|A-\widetilde{B}| \leq \DS\frac{2}{m} < m(\sqrt{m}-1) \leq |A - B|$. Therefore $$\dist(A,SL(n)) \neq |A| \cdot \l|1 - \frac{1}{\det^{1/n} A} \r|$$ Note that for $1 \ll m$, we have $|1-\det A| \approx 1$ and $\dist(A,SL(n)) \leq \l|A-\widetilde{B}\r| \approx 0$. 
\end{example}

Fact \hreff{MNP_symmetric_lemma} is standard \cite[p.~144]{AGS}. Lemma \hreff{MNP_lemma} reduces the matrix nearness problem to diagonal matrices. 

\begin{fact} \label{MNP_symmetric_lemma}
 Let $A,B \in \operatorname{Mat}^{n\times n}$ be symmetric. If $A$ is positive definite, then $AB$ has real eigenvalues.
\end{fact}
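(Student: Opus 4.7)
The plan is to reduce to the symmetric case by a similarity transformation using the square root of $A$. Since $A$ is symmetric and positive definite, the spectral theorem produces a symmetric positive definite square root $A^{1/2}$ with $A^{1/2} A^{1/2} = A$, and $A^{1/2}$ is invertible with inverse $A^{-1/2}$.

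The key identity I would write down is
\begin{equation*}
AB \;=\; A^{1/2}\bigl(A^{1/2} B A^{1/2}\bigr) A^{-1/2},
\end{equation*}
which exhibits $AB$ as similar to $A^{1/2} B A^{1/2}$. The matrix $A^{1/2} B A^{1/2}$ is symmetric because
\begin{equation*}
\bigl(A^{1/2} B A^{1/2}\bigr)^T \;=\; \bigl(A^{1/2}\bigr)^T B^T \bigl(A^{1/2}\bigr)^T \;=\; A^{1/2} B A^{1/2},
\end{equation*}
using the symmetry of $A^{1/2}$ and $B$.

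By the real spectral theorem, the symmetric real matrix $A^{1/2} B A^{1/2}$ has only real eigenvalues. Since similar matrices share their spectra, $AB$ has real eigenvalues as well, which is the desired conclusion.

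There is no real obstacle here: the only ingredient beyond linear algebra bookkeeping is the existence of the symmetric square root of a positive definite symmetric matrix, which is a standard application of the spectral theorem. The proof is essentially two lines once the symmetric square root is introduced.
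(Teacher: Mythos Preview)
Your proof is correct. It differs from the paper's argument, though both are short and standard. The paper starts from an eigenpair $ABv=\lambda v$ with $v\in\C^n$, multiplies on the left by $\overline{v}^T A^{-1}$ to obtain $\overline{v}^T B v = \lambda\,\overline{v}^T A^{-1} v$, and then observes that $\overline{v}^T B v$ is real (since $B$ is real symmetric, hence Hermitian) while $\overline{v}^T A^{-1} v$ is real and strictly positive (since $A^{-1}$ is positive definite), forcing $\lambda\in\R$. Your route instead exhibits $AB$ as similar to the real symmetric matrix $A^{1/2}BA^{1/2}$ via the square root of $A$. Your approach has the small bonus of immediately giving more: $AB$ is diagonalizable with a full set of real eigenvectors (because it is similar to a symmetric matrix), whereas the paper's argument as written only yields reality of each eigenvalue. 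Conversely, the paper's argument avoids invoking the existence of $A^{1/2}$, needing only that $A^{-1}$ is positive definite. Either is perfectly adequate for the use made of this fact later in Lemma~\ref{MNP_lemma}.
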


\begin{proof}
Suppose $A B v = \lambda v$ with $\lambda \in \C$ and $v \in \C^n$ nonzero. Note $$\overline{v}^T B v = \lambda \overline{v}^T A^{-1} v \ .$$ Since $A$ and $B$ are symmetric, there exist $M \in O(n)$ and $N \in O(n)$ such that $A = M^T \diag(a_1,\ldots,a_n) M$ and $B = N^T \diag(b_1,\ldots,b_n)N$ where $\{b_j\}_{j=1}^n \subset \R$ and $\{a_j\}_{j=1}^n \subset \R_{>0}$. This implies that $$\overline{v}^T B v = \overline{Nv}^T \diag(b_1,\ldots,b_n) N v  \in \R$$ and $$\overline{v}^T A^{-1} v = \overline{Mv}^T \diag\l(\frac{1}{a_1},\ldots,\frac{1}{a_n}\r) M v \in \R_{>0} \ .$$   
Therefore $\lambda \in \R$.
\end{proof}

\begin{lemma} \label{MNP_lemma} 
Take $A \in \operatorname{Mat}^{n \times n}$ with $\det A >0$. Let $0<a_1 \leq \cdots \leq a_n$ be the singular values of $A$. We have 
\begin{equation} \label{MNP_lemma_eqn0}
\dist^2\l(A,SL(n)\r) = \min\l\lb\sum_{j=1}^n (c_j - a_j)^2 ~:~ c_j > 0 \text{ with } \prod_{j=1}^n c_j =1 \r\rb \ .
\end{equation}
Moreover, there exists a minimizer $\diag(d_1,\ldots,d_n)$ with $0<d_1 \leq \cdots \leq d_n$.
\end{lemma}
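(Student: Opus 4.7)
The plan is to reduce to the case of a diagonal $A$ via the singular value decomposition and then derive the lower bound from von Neumann's trace inequality, with a compactness plus rearrangement argument to produce a sorted minimizer.

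First I would write $A = U \Sigma V^T$ with $U, V \in O(n)$ and $\Sigma = \diag(a_1, \ldots, a_n)$. Since $\det A > 0$ and $\det \Sigma > 0$, we have $\det U \cdot \det V = 1$, so the map $B \mapsto U^T B V$ is a Frobenius-norm isometry of $\operatorname{Mat}^{n \times n}$ that preserves $SL(n)$ (because $\det(U^T B V) = \det U \det V \det B = \det B$). This reduces the problem to $A = \Sigma$, and I may work with $\Sigma$ throughout.

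The easy direction $\l(\leq\r)$ is direct: for any $c_1, \ldots, c_n > 0$ with $\prod c_j = 1$, the matrix $\diag(c_1, \ldots, c_n)$ lies in $SL(n)$, so $\dist^2\l(\Sigma, SL(n)\r) \leq \sum_{j=1}^n (c_j - a_j)^2$. For the direction $\l(\geq\r)$, fix any $B \in SL(n)$ and let $c_1 \leq \cdots \leq c_n$ denote the singular values of $B$, so that $\prod c_i = \l|\det B\r| = 1$. Expanding the Frobenius norm gives
\begin{equation*}
\l|B - \Sigma\r|^2 = \tr\l(B^T B\r) + \tr\l(\Sigma^2\r) - 2\tr\l(B^T \Sigma\r) = \sum_{i=1}^n c_i^2 + \sum_{i=1}^n a_i^2 - 2 \tr\l(B^T \Sigma\r).
\end{equation*}
Von Neumann's trace inequality applied to $B^T$ and $\Sigma$, whose singular values are $c_1, \ldots, c_n$ and $a_1, \ldots, a_n$ respectively, yields $\tr\l(B^T \Sigma\r) \leq \sum_{i=1}^n c_i a_i$, where the matching of indices is the same whether one orders the singular values increasingly or decreasingly. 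Rearranging shows $\l|B - \Sigma\r|^2 \geq \sum_{i=1}^n (c_i - a_i)^2$, and the tuple $(c_1, \ldots, c_n)$ is a feasible point of the minimization problem on the right side of (\hreff{MNP_lemma_eqn0}), proving the $\l(\geq\r)$ inequality.

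For the existence and the sorted form of a minimizer I would argue by compactness and rearrangement. If $\sum_{j} (c_j - a_j)^2 \leq M$ on $\l\lb c_j > 0, \prod c_j = 1 \r\rb$, then each $c_j \leq a_n + \sqrt{M}$ and the product constraint forces each $c_j \geq (a_n + \sqrt{M})^{1-n}$, so the sublevel set is compact and a minimizer $(d_1, \ldots, d_n)$ exists. Given such a minimizer, any permutation of its entries preserves both $\sum d_j^2$ and $\prod d_j$, while the rearrangement inequality applied with the sorted sequence $a_1 \leq \cdots \leq a_n$ shows that $\sum d_{\pi(j)} a_j$ is maximized when the $d_{\pi(j)}$ are also sorted in increasing order. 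Hence the sorted reordering can only decrease $\sum (d_j - a_j)^2$, so a sorted minimizer exists.

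The main obstacle is the $\l(\geq\r)$ direction: the non-elementary input is von Neumann's trace inequality, and Fact \hreff{MNP_symmetric_lemma} is precisely the ingredient used in its standard derivation (the real eigenvalues of products $\Sigma B^T$ of symmetric-times-nonsymmetric matrices, once one symmetrizes). Everything else is standard calculus, compactness, and the rearrangement inequality.
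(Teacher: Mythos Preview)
Your proof is correct and takes a genuinely different route from the paper. You reduce to $\Sigma=\diag(a_1,\ldots,a_n)$ by the SVD isometry and then obtain the lower bound in one stroke from von Neumann's trace inequality, followed by compactness and rearrangement for the sorted minimizer. The paper instead analyzes a minimizer $B\in SL(n)$ directly via Lagrange multipliers: from $B-\diag(a_i)=\lambda(B^{-1})^T$ it polar-decomposes $B=SO$, uses Fact~\ref{MNP_symmetric_lemma} to show that $O$ is symmetric orthogonal with eigenvalues $\pm 1$ and commutes with $\diag(a_i)$, and concludes that a positive diagonal minimizer exists. Your argument is cleaner and shorter for the lemma as stated; the paper's hands-on route, however, produces as a byproduct the scalar relation $d_j-a_j=\lambda/d_j$ and the trichotomy \eqref{MNP_lemma_eqn12}, both of which are used essentially in the proof of Proposition~\ref{MNP_prop} immediately afterward. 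If you adopt your approach you will need to recover those separately, which is straightforward: apply Lagrange multipliers to the already-reduced diagonal problem $\min\{\sum(c_j-a_j)^2:\prod c_j=1\}$. One minor remark: your aside that Fact~\ref{MNP_symmetric_lemma} is ``precisely the ingredient'' in the standard derivation of von Neumann's inequality is not accurate---the usual proofs go through SVD and the Birkhoff--von Neumann theorem on doubly stochastic matrices---but this does not affect the validity of your argument.
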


\begin{proof}
\textbf{(1)} Note $A = (A A^T)^{\frac{1}{2}} \cdot (A A^T)^{-\frac{1}{2}} A$ where $(A A^T)^{-\frac{1}{2}} A \in SO(n)$.  This implies that for any $B \in SL(n)$, we have $$|A-B| = \l|(A A^T)^{\frac{1}{2}} - B \l( (A A^T)^{-\frac{1}{2}} A \r)^{-1}\r|$$ where $\det B \l( (A A^T)^{-\frac{1}{2}} A \r)^{-1} = 1$. Therefore $$\dist\l(A,SL(n)\r) = \dist((A A^T)^{\frac{1}{2}}, SL(n)) \ .$$ Since $(A A^T)^{\frac{1}{2}}$ is positive definite symmetric, there exists $M \in O(n)$ such that \newline $M^T (A A^T)^{\frac{1}{2}} M = \operatorname{diag}(a_1, \ldots, a_n)$. Recall that $0<a_1\leq \ldots \leq a_n$ are the singular values of $A$. This implies that for any $B \in SL(n)$, we have $$\l|(A A^T)^{\frac{1}{2}}-B\r| = \l|M^T(A A^T)^{\frac{1}{2}}M-M^TBM\r| = \l|\diag(a_1,\ldots,a_n)-M^TBM\r|$$ where $\det M^T B M = 1$. Therefore 
\begin{equation} \label{MNP_lemma_eqn6}
\begin{gathered}
\dist\l(A,SL(n)\r) = \dist\l(\diag(a_1,\ldots,a_n), SL(n)\r) \ . 
\end{gathered}
\end{equation}
\textbf{(2)} Note that the derivative of $\operatorname{Mat}^{n \times n} \ni X \to \det X - 1 \in \R$ at any $B \in SL(n)$ is $\operatorname{cof} B = \det B ~ (B^{-1})^T = (B^{-1})^T$. Note that the derivative of $\operatorname{Mat}^{n \times n} \ni X \to \l|\diag(a_1,\ldots,a_n)-X\r|^2 \in \R$ at any $B \in SL(n)$ is $2 \l(B-\diag(a_1,\ldots,a_n) \r)$. Take 
\begin{equation} \label{MNP_lemma_eqn2}
\begin{gathered}
B \in \operatorname{argmin} \l\lb \l|\diag(a_1,\ldots,a_n) - X\r|^2~:~ X \in SL(n) \r\rb \ .
\end{gathered}
\end{equation}
Observe that $B$ is a critical point of $\operatorname{Mat}^{n \times n} \ni X \to \l|\diag(a_1,\ldots,a_n)-X\r|^2 \in \R$ restricted to $SL(n)$. Therefore there exists $\lambda \in \R$ such that  
\begin{gather*}
B - \diag(a_1,\ldots,a_n)= \lambda~ (B^{-1})^T \ .  
\end{gather*}
Set $S:=(B B^T)^{\frac{1}{2}}$ and $O:=(B B^T)^{-\frac{1}{2}} B$. Note $B=SO$ with $O \in SO(n)$ and $S$ positive definite symmetric with $\det S=1$. Substituting, we have 
\begin{gather*}
 SO - \diag(a_1,\ldots,a_n) = \lambda ~S^{-1}O
\end{gather*}
Rearranging, we have
\begin{equation} \label{MNP_lemma_eqn1}
\begin{gathered} 
O^T = \diag(a_1,\ldots,a_n)^{-1} \l(S-\lambda~ S^{-1}\r) \\ = \diag\l(\frac{1}{a_1},\ldots,\frac{1}{a_n}\r) \l(S-\lambda~ S^{-1}\r) \ .
\end{gathered}
\end{equation}
Note that $S- \lambda~ S^{-1}$ is symmetric. Note that $\diag\l(\frac{1}{a_1},\ldots,\frac{1}{a_n}\r)$ is positive definite symmetric. Therefore, Fact \hreff{MNP_symmetric_lemma} implies that the eigenvalues of $O^T$ are real. 

\textbf{(3)} Since $O \in SO(n)$, we have $O^T O = I = O O^T$. This implies the existence of $U \in U(n)$ such that $\overline{U}^T O^T U = \diag(\lambda_1,\ldots,\lambda_n)$. Here $\l\lb \lambda_j \r\rb_{j=1}^n \subset \R$ are the eigenvalues of $O^T$. Note that 
\begin{gather*}
\l(\overline{U}^T O^T U\r)^{-1} \underset{O \in SO(n)}{=} \overline{U}^T O U \underset{O \in \operatorname{Mat}^{n\times n}}{=}\overline{\overline{U}^T O^T U}^T \\ =\overline{\diag(\lambda_1,\ldots,\lambda_n)}^T = \diag\l(\lambda_1,\ldots,\lambda_n\r) \ . 
\end{gather*}
Therefore $$I = \l(\overline{U}^T O^T U\r)^{-1}~\l(\overline{U}^T O^T U\r) = \diag\l(\lambda_1^2,\ldots,\lambda_n^2\r) \ .$$ This implies that $\lambda_j = \pm 1$ for $j=1,\ldots,n$. Note that 
\begin{gather*}
O^T O^T =\l( U \diag(\lambda_1,\ldots,\lambda_n) \overline{U}^T \r) \l( U \diag(\lambda_1,\ldots,\lambda_n) \overline{U}^T \r) \\ = U  \diag\l(\lambda_1^2,\ldots,\lambda_n^2\r) \overline{U}^T  = U~ \overline{U}^T=I \ .  
\end{gather*}
Therefore $O^T = \l(O^T\r)^{-1}$. Since $O \in SO(n)$, we deduce $O^T = O$. Since $O$ is symmetric, we can take $U \in O(n) \subset U(n)$. Moreover, 
\begin{gather*}
\diag\l(a_1,\ldots,a_n\r) O = \diag\l(a_1,\ldots,a_n\r) O^T \\ \underset{(\hreff{MNP_lemma_eqn1})}{=} S - \lambda S^{-1} \underset{S^T = S}{=} \l(S - \lambda S^{-1} \r)^T \\ \underset{(\hreff{MNP_lemma_eqn1})}{=} O \diag(a_1,\ldots,a_n)^T =O \diag(a_1,\ldots,a_n) \ .
\end{gather*}
Therefore $O$ and $\diag(a_1,\ldots,a_n)$ are simultaneously diagonalizable with
\begin{equation} \label{MNP_lemma_eqn3}
\begin{gathered}
U^T O U = \overline{U}^T O^T \overline{U}=\diag\l(\lambda_1,\ldots,\lambda_n\r) =  \diag(\pm 1,\ldots,\pm 1)  \\ \text{ and }~ U^T \diag(a_1,\ldots,a_n) U = \diag\l(a_{\sigma(1)},\ldots,a_{\sigma(n)}\r) 
\end{gathered}
\end{equation}
where $\sigma: \l\lb 1,\ldots,n\r\rb \to \l\lb 1,\ldots,n\r\rb$ is a permutation. Set $\widetilde{S}:= U^T S U $. Since $S$ is positive definite symmetric with $\det S = 1$, we have $\widetilde{S}$ is positive definite symmetric with $\det \widetilde{S} = 1$. Note that 
\begin{equation} \label{MNP_lemma_eqn4}
\begin{gathered}
 \dist^2 \l(\diag(a_1,\ldots,a_n),SL(n)\r) \underset{(\hreff{MNP_lemma_eqn2})}{=}  \l|B-\diag(a_1,\ldots,a_n)\r|^2 \\ = \l|S O - \diag(a_1,\ldots,a_n)\r|^2 \\ = \l| \l(U^T S U\r) \l(U^T O U \r) - U^T \diag(a_1,\ldots,a_n) U \r|^2 \\ \underset{(\hreff{MNP_lemma_eqn3})}{=}
 \l|\widetilde{S} \diag(\lambda_1,\ldots,\lambda_n) - \diag\l(a_{\sigma(1)},\ldots,a_{\sigma(n)}\r) \r|^2 \\= \l|\widetilde{S} - \diag\l(a_{\sigma(1)},\ldots,a_{\sigma(n)}\r) \diag(\lambda_1,\ldots,\lambda_n) \r|^2 \ .
\end{gathered}
\end{equation}
Note that  
\begin{equation} \label{MNP_lemma_eqn5}
\begin{gathered}
\widetilde{S} - \lambda \widetilde{S}^{-1} =U^T \l( S - \lambda S^{-1} \r) U \\ \underset{(\hreff{MNP_lemma_eqn1})}{=} U^T \l( \diag(a_1,\ldots,a_n) O^T \r) U  = \l(U^T \diag(a_1,\ldots,a_n) U \r)\l(U^T O^T U \r) \\ \underset{(\hreff{MNP_lemma_eqn3})}{=} \diag\l(a_{\sigma(1)},\ldots,a_{\sigma(n)}\r) ~\diag(\lambda_1,\ldots,\lambda_n) \ .
\end{gathered}
\end{equation}
Since $\widetilde{S}$ is positive definite symmetric, there exists $N \in O(n)$ such that $N^T \widetilde{S} N = \diag(b_1,\ldots,b_n)$ with $0 < b_1 \leq \ldots \leq b_n$. Since $\det \widetilde{S} =1$, we have $\prod_{j=1}^n~b_j = 1$. Multiplying (\hreff{MNP_lemma_eqn5}) by $N^T$ and $N$, we have 
\begin{equation} \label{MNP_lemma_eqn8}
\diag(b_1,\ldots,b_n) - \lambda \diag\l(\frac{1}{b_1},\ldots,\frac{1}{b_n}\r) =   N^T \diag\l(\lambda_1~ a_{\sigma(1)},\ldots,\lambda_n~ a_{\sigma(n)}\r) N \ .
\end{equation}
Note $\diag(b_1,\ldots,b_n) - \lambda \diag\l(\frac{1}{b_1},\ldots,\frac{1}{b_n}\r)$ diagonal implies that 
\begin{equation} \label{MNP_lemma_eqn9}
N^T \diag\l(\lambda_1~ a_{\sigma(1)},\ldots,\lambda_n~ a_{\sigma(n)}\r) N = \diag\l(\lambda_{\widetilde{\sigma}(1)}~ a_{\widetilde{\sigma}\circ \sigma (1)},\ldots,\lambda_{\widetilde{\sigma}(n)}~ a_{\widetilde{\sigma} \circ \sigma(n)}\r) 
\end{equation}
where $\widetilde{\sigma}: \l\lb 1,\ldots,n\r\rb \to \l\lb 1,\ldots,n\r\rb$ is a permutation. Therefore   
\begin{equation} \label{MNP_lemma_eqn11}
\begin{gathered}
 \dist^2\l(\diag(a_1,\ldots,a_n),SL(n)\r) \underset{(\hreff{MNP_lemma_eqn4})}{=} \l| \widetilde{S} - \diag\l(\lambda_1~ a_{\sigma(1)},\ldots,\lambda_n~ a_{\sigma(n)}\r) \r|^2 \\ = \l|N^T \widetilde{S} N - N^T \diag\l(\lambda_1~ a_{\sigma(1)},\ldots,\lambda_n~ a_{\sigma(n)}\r) N \r|^2 \\ \underset{(\hreff{MNP_lemma_eqn9})}{=} \l|\diag(b_1,\ldots,b_n) -  \diag\l(\lambda_{\widetilde{\sigma}(1)}~ a_{\widetilde{\sigma} \circ \sigma(1)},\ldots,\lambda_{\widetilde{\sigma}(n)}~ a_{\widetilde{\sigma}\circ \sigma (n)}\r)\r|^2 \\ \underset{(\hreff{MNP_lemma_eqn3})}{=} \l|\diag(\lambda_{\widetilde{\sigma}(1)}~b_1,\ldots,\lambda_{\widetilde{\sigma}(n)}~b_n) -  \diag\l(a_{\widetilde{\sigma} \circ \sigma(1)},\ldots, a_{\widetilde{\sigma}\circ \sigma (n)}\r)\r|^2 \\ = \l| \diag\l(\lambda_{\sigma^{-1}(1)}~ b_{\sigma^{-1} \circ\widetilde{\sigma}^{-1}(1)},\ldots,\lambda_{\sigma^{-1}(n)}~ b_{\sigma^{-1} \circ\widetilde{\sigma}^{-1}(n)}\r) - \diag(a_1,\ldots,a_n)\r|^2   
\end{gathered}
\end{equation}
Since $0<a_j$ and $0<b_j$ for $1 \leq j \leq n$, we have $$\l| b_{\sigma^{-1} \circ\widetilde{\sigma}^{-1}(j)} - a_j\r| \leq \l| \lambda_{\sigma^{-1}(j)}~ b_{\sigma^{-1}\circ \widetilde{\sigma}^{-1}(j)} - a_j\r| \leq \l|  b_{\sigma^{-1} \circ\widetilde{\sigma}^{-1}(j)} + a_j\r|$$ for $1 \leq j \leq n$ because $\lambda_{\sigma^{-1}(j)} = \pm 1$. This implies that 
\begin{gather*}
\l| \diag\l(b_{\sigma^{-1} \circ\widetilde{\sigma}^{-1}(1)},\ldots,b_{\sigma^{-1} \circ\widetilde{\sigma}^{-1}(n)}\r) - \diag(a_1,\ldots,a_n)\r|^2 \\ \leq \l| \diag\l(\lambda_{\sigma^{-1}(1)}~ b_{\sigma^{-1} \circ\widetilde{\sigma}^{-1}(1)},\ldots,\lambda_{\sigma^{-1}(n)}~ b_{\sigma^{-1} \circ\widetilde{\sigma}^{-1}(n)}\r) - \diag(a_1,\ldots,a_n)\r|^2 \ .
\end{gather*}
Since $\prod_{j=1}^n~b_j = 1$, we have $\diag\l(b_{\sigma^{-1} \circ\widetilde{\sigma}^{-1}(1)},\ldots,b_{\sigma^{-1} \circ\widetilde{\sigma}^{-1}(n)}\r) \in SL(n)$. Therefore we can assume $\lambda_j =1$ for $1 \leq j \leq n$. We have   
\begin{equation} \label{MNP_lemma_eqn7}
\begin{gathered}
\dist^2\l(A,SL(n)\r) \underset{(\hreff{MNP_lemma_eqn6})}{=} \dist^2\l(\diag(a_1,\ldots,a_n),SL(n)\r) \\ \underset{(\hreff{MNP_lemma_eqn11})}{=} \l| \diag\l(\lambda_{\sigma^{-1}(1)}~ b_{\sigma^{-1} \circ\widetilde{\sigma}^{-1}(1)},\ldots,\lambda_{\sigma^{-1}(n)}~ b_{\sigma^{-1} \circ\widetilde{\sigma}^{-1}(n)}\r) - \diag(a_1,\ldots,a_n)\r|^2 \\ = \l| \diag\l(b_{\sigma^{-1} \circ\widetilde{\sigma}^{-1}(1)},\ldots,b_{\sigma^{-1} \circ\widetilde{\sigma}^{-1}(n)}\r) - \diag(a_1,\ldots,a_n)\r|^2  
\end{gathered}
\end{equation}
This shows (\hreff{MNP_lemma_eqn0}). Suppose that $b_{\sigma^{-1} \circ\widetilde{\sigma}^{-1}(i)} > b_{\sigma^{-1} \circ\widetilde{\sigma}^{-1}(j)}$ for $i < j$. Since $a_i \leq a_j$, we have
\begin{gather*}
 \l(b_{\sigma^{-1} \circ\widetilde{\sigma}^{-1}(i)} - a_i\r)^2 + \l(b_{\sigma^{-1} \circ\widetilde{\sigma}^{-1}(j)} - a_j\r)^2 = \l(b_{\sigma^{-1} \circ\widetilde{\sigma}^{-1}(i)} - a_j\r)^2 + \l(b_{\sigma^{-1} \circ\widetilde{\sigma}^{-1}(j)} - a_i\r)^2 + \\ + \l(b_{\sigma^{-1} \circ\widetilde{\sigma}^{-1}(i)} - a_j\r)\l(b_{\sigma^{-1} \circ\widetilde{\sigma}^{-1}(j)} - a_i\r) - \l(b_{\sigma^{-1} \circ\widetilde{\sigma}^{-1}(i)} - a_i\r)\l(b_{\sigma^{-1} \circ\widetilde{\sigma}^{-1}(j)} - a_j\r) \\ = \l(b_{\sigma^{-1} \circ\widetilde{\sigma}^{-1}(i)} - a_j\r)^2 + \l(b_{\sigma^{-1} \circ\widetilde{\sigma}^{-1}(j)} - a_i\r)^2 + \l(b_{\sigma^{-1} \circ\widetilde{\sigma}^{-1}(i)} - b_{\sigma^{-1} \circ\widetilde{\sigma}^{-1}(j)}\r)\l(a_j - a_i\r) \\ \geq \l(b_{\sigma^{-1} \circ\widetilde{\sigma}^{-1}(i)} - a_j\r)^2 + \l(b_{\sigma^{-1} \circ\widetilde{\sigma}^{-1}(j)} - a_i\r)^2
\end{gather*}
Therefore we can assume that $b_{\sigma^{-1} \circ\widetilde{\sigma}^{-1}(i)} \leq b_{\sigma^{-1} \circ\widetilde{\sigma}^{-1}(j)}$ for $i < j$. Set $d_j:=b_{\sigma^{-1} \circ\widetilde{\sigma}^{-1}(j)}$.

\textbf{(4)} Incorporating (\hreff{MNP_lemma_eqn9}) into (\hreff{MNP_lemma_eqn8}), we have $d_j - \lambda_{\sigma^{-1}(j)} a_j = \frac{\lambda}{d_j}$ for $1\leq j \leq n$. Since $\lambda_{\sigma^{-1}(j)} =1$ for $1\leq j \leq n$, we have 
\begin{equation} \label{MNP_lemma_eqn10}
d_j - a_j = \frac{\lambda}{d_j} \ .
\end{equation}
This implies that
\begin{equation} \label{MNP_lemma_eqn12}
\begin{gathered} 
d_j = a_j \text{ for $1\leq j\leq n$ iff } \lambda = 0 \text{ iff } \prod_{i=1}^n a_i = 1 \\ d_j > a_j \text{ for $1\leq j\leq n$ iff } \lambda > 0 \text{ iff } \prod_{i=1}^n a_i < 1 \\ d_j < a_j \text{ for $1\leq j\leq n$ iff } \lambda < 0 \text{ iff } \prod_{i=1}^n a_i > 1 
\end{gathered}
\end{equation}
We use Step (4) in Proposition \hreff{MNP_prop}.
\end{proof}
Take $A \in \operatorname{Mat}^{n\times n}$ with $0 < \det A$. Let $0<a_1\leq \ldots \leq a_n$ be the singular values of $A$. We can try to determine a minimizer $\diag\l(c_1,\ldots,c_n\r)$ in (\hreff{MNP_lemma_eqn0}) through matching the larger entries of $\diag\l(a_1,\ldots,a_n\r)$. Take $$c_j:=\begin{cases} a_j & \text{ for } j>1 \\ \frac{1}{a_2 \cdots a_n} & \text{ for } j=1  \end{cases} \ .$$ Note $c_1=\frac{1}{c_2 \cdots c_n} = \frac{a_1}{a_1 \cdots a_n}$. We have $$\l(\sum_{j=1}^n (c_j - a_j)^2 \r)^{1/2} = a_1 \l|1 - \frac{1}{a_1 \cdots a_n}\r| = \frac{1}{||A^{-1}||} \l|1 - \frac{1}{\det A}\r| \ .$$ This implies that 
\begin{equation} \label{MNP_upper_bdd}
 \dist\l(A,SL(n)\r) \leq \frac{1}{||A^{-1}||} \l|1 - \frac{1}{\det A}\r| \leq \frac{\sqrt{n}}{|A^{-1}|} \l|1 - \frac{1}{\det A}\r| \ . 
\end{equation}
If the determinant of $A$ is bounded away from 0, then we can show a lower bound.  
\begin{proposition} \label{MNP_prop}
Take $0<\theta \leq \frac{1}{2}$. If $A \in \operatorname{Mat}^{n \times n}$ with $\det A \geq \theta >0$, then 
\begin{equation} \label{MNP_prop_eqn0}
\frac{1}{\l|A^{-1}\r|}\l|1-\frac{1}{\det A}\r| \leq  \frac{C_4}{\theta} \dist\l(A,SL(n)\r)
\end{equation}
for a constant $C_4:=C_4(n)$.
\end{proposition}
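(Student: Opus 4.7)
The plan is to reduce to diagonal matrices via Lemma \hreff{MNP_lemma} and then estimate $|\det A - 1|$ through a telescoping identity combined with the Cauchy-Schwarz inequality. Both sides of (\hreff{MNP_prop_eqn0}) depend on $A$ only through its singular values $0 < a_1 \leq \cdots \leq a_n$ (with $\det A = \prod a_j$ since $\det A \geq \theta > 0$), so I may assume $A = \diag(a_1,\ldots,a_n)$ and take the minimizer in (\hreff{MNP_lemma_eqn0}) to be $D = \diag(d_1,\ldots,d_n)$ with $0 < d_1 \leq \cdots \leq d_n$ and $\prod d_j = 1$. Step (4) of Lemma \hreff{MNP_lemma} supplies the Lagrange condition $d_j - a_j = \lambda/d_j$, with $\lambda$ of opposite sign to $\det A - 1$; the case $\det A = 1$ gives $\lambda = 0$, $d_j = a_j$, and both sides of (\hreff{MNP_prop_eqn0}) vanish.

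For $\det A \neq 1$, the key identity is the telescoping
$$\det A - 1 = \prod_{j=1}^n a_j - \prod_{j=1}^n d_j = \sum_{k=1}^n (a_k - d_k)\, T_k, \qquad T_k := \prod_{j<k} a_j \prod_{j>k} d_j.$$
Using $\prod d_j = 1$, I rewrite the coefficient as $T_k = R_k/a_k$ where $R_k := \prod_{j \leq k}(a_j/d_j)$, so that $R_n = \det A$. In the case $\det A > 1$, Step (4) of Lemma \hreff{MNP_lemma} forces $d_j < a_j$ and hence $a_j/d_j \geq 1$ for every $j$, giving the monotone bound $R_k \leq R_n = \det A$ and therefore $T_k \leq \det A/a_k$. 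Cauchy-Schwarz then yields
$$(\det A - 1)^2 \leq \l(\sum_k (a_k - d_k)^2\r)\l(\sum_k T_k^2\r) \leq \dist^2\l(A, SL(n)\r) \cdot \det^2 A \cdot \l|A^{-1}\r|^2,$$
which rearranges to $\frac{|1 - 1/\det A|}{|A^{-1}|} \leq \dist(A, SL(n)) \leq \frac{1}{\theta}\dist(A, SL(n))$.

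For $\det A < 1$, I would run the same argument with the roles of $a$ and $d$ interchanged: $1 - \det A = \sum_k (d_k - a_k)\, S_k$ with $S_k = \prod_{j<k} d_j \prod_{j>k} a_j = (\det A/a_k) \prod_{j<k}(d_j/a_j)$. Since $d_j/a_j \geq 1$ and $\prod_j (d_j/a_j) = 1/\det A$, we get $S_k \leq 1/a_k$ and $\sum S_k^2 \leq |A^{-1}|^2$. Cauchy-Schwarz then gives $1 - \det A \leq \dist(A, SL(n)) \cdot |A^{-1}|$, and dividing through by $\det A \geq \theta$ produces $\frac{|1 - 1/\det A|}{|A^{-1}|} = \frac{1 - \det A}{\det A\, |A^{-1}|} \leq \frac{1}{\theta}\dist(A, SL(n))$. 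Thus $C_4$ can be taken to be $1$.

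The main obstacle I anticipate is picking the right factorization of the telescoping coefficient. The naive bound $T_k = (1/d_k)\prod_{j<k}(a_j/d_j) \leq \det A/d_k$ is too crude, since $1/d_k$ can be much larger than $1/a_k$ when $\det A$ is large, leaving $|D^{-1}|$ rather than $|A^{-1}|$ on the right-hand side. Factoring instead as $T_k = R_k/a_k$ and invoking $R_k \leq \det A$, which requires all $a_j/d_j$ to lie on the same side of $1$ (guaranteed by Step (4) of Lemma \hreff{MNP_lemma}), is what makes the Cauchy-Schwarz step close the estimate cleanly.
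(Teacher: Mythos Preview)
Your argument is correct and takes a genuinely different, cleaner route than the paper. The paper splits into two regimes: for $\det A \geq \tfrac{1}{2}$ it runs a contradiction argument, assuming a bad sequence $A_j$ exists and deriving a bound $j \leq 4n^{n+n/2+3/2}$ via expansion of $\det(I + D_j^{-1}(A_j - D_j))$; for $\theta \leq \det A \leq \tfrac{1}{2}$ it uses the Lagrange relation $d_j - a_j = \lambda/d_j$ to show $d_1/a_1 \geq \theta^{-1/n}$ and bounds $a_1$ directly. The resulting $C_4(n)$ is non-explicit and grows at least like $n^{3n/2}$.

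Your telescoping identity $\det A - 1 = \sum_k (a_k - d_k)\,T_k$ together with the factorization $T_k = R_k/a_k$, $R_k = \prod_{j\leq k}(a_j/d_j)$, is the key simplification. The sign consistency of all $a_j/d_j - 1$ (which you correctly extract from Step~(4) of Lemma~\ref{MNP_lemma}) makes $R_k$ monotone in $k$, so $R_k \leq R_n = \det A$ in the case $\det A > 1$, and the symmetric bound $S_k \leq 1/a_k$ in the case $\det A < 1$. Cauchy--Schwarz then closes the estimate in one line and yields the explicit constant $C_4 = 1$, independent of $n$. This is a strict improvement over the paper's proof in both simplicity and sharpness of the constant. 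The only place you lean on the paper's machinery is the Lagrange condition from Lemma~\ref{MNP_lemma}, which is unavoidable since one needs \emph{some} structural information about the minimizer $D$; the paper's proof uses the same condition but extracts less from it.
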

\begin{proof}
Set $\R^n_{>0} := \{(x_j)_{j=1}^n  \in \R^n~:~x_j > 0 \text{ for } 1 \leq j \leq n\}$ and \newline $\R^n_1:=\l\lb (x_j)_{j=1}^n \in \R_{>0}^n~:~ \prod_{j=1}^n~x_j = 1 \r\rb$. Step (1) rephrases the estimate. Step (2) and Step (3) correspond to two regions in $\R^n_{>0}$.

\textbf{(1)} Let $0<a_1\leq \cdots \leq a_n$ be the singular values of $A$ meaning the eigenvalues of $\l(AA^T\r)^{\frac{1}{2}}$. Set $\wda:=\diag(a_1,\ldots,a_n)$. 
Note that 
\begin{equation} \label{MNP_prop_eqn4}
\det \wda = \det A \ .
\end{equation}
Since $\l(AA^T\r)^{\frac{1}{2}}$ is positive definite symmetric, there exists $M \in O(n)$ such that $\l(AA^T\r)^{\frac{1}{2}} = M^T \wda M$. Note that 
\begin{equation} \label{MNP_prop_eqn3}
\l|A^{-1}\r| = \l|\l(\l(AA^T\r)^{\frac{1}{2}}~\l(AA^T\r)^{-\frac{1}{2}} A  \r)^{-1}\r| \\ \underset{\l(AA^T\r)^{-\frac{1}{2}} A \in O(n)}{=} \l|\l(AA^T\r)^{-\frac{1}{2}}\r| \\ \underset{M \in O(n)}{=} \l|\wda^{-1}\r|
\end{equation}
By Lemma \hreff{MNP_lemma}, we have 
\begin{equation} \label{MNP_prop_eqn2}
\dist\l(A,SL(n)\r) = \dist\l(\wda,SL(n)\r) = \l|\wda - D \r|
\end{equation}
for $D:=\diag(d_1,\ldots,d_n)$ with $0 < d_1 \leq \ldots \leq d_n$ and $\prod_{j=1}^n~d_j = 1$. By (\hreff{MNP_prop_eqn4}), (\hreff{MNP_prop_eqn3}), and (\hreff{MNP_prop_eqn2}) it suffices to show that  
\begin{equation} \label{MNP_prop_eqn1}
\frac{1}{\l|\wda^{-1}\r|}\l|1-\frac{1}{\det \wda}\r| \leq \frac{C_4}{\theta} \dist\l(\wda,SL(n)\r) 
\end{equation}
for a constant $C_4:=C_4(n)$.  

\textbf{(2)} Take $(a_j)_{j=1}^n \in \R^n_{>0}$ with $\frac{1}{2} \leq \det \wda$. By Lemma \hreff{MNP_lemma},  
\begin{equation} \label{MNP_prop_eqn6}
\begin{gathered}
\frac{1}{\l|\l|\wda^{-1}\r|\r|}\l|1-\frac{1}{\det \wda}\r| \leq C \dist(\wda,SL(n)) 
\end{gathered}
\end{equation}
for a constant $C:=C(n)$ is equivalent to 
\begin{equation} \label{MNP_prop_eqn8}
\begin{gathered}
a_1 \l|1-\prod_{j=1}^n~ \frac{1}{a_j}\r| \leq C \dist\l((a_j)_{j=1}^n ,\R^n_1\r) 
\end{gathered}
\end{equation}
for a constant $C:=C(n)$. Suppose to the contrary that for all $j>1$ there exist $\l(a_i^{(j)}\r)_{i=1}^n \subset \R^n_{>0}$ with $\frac{1}{2} \leq \prod_{i=1}^n~a^{(j)}_i$ such that 
\begin{equation} \label{MNP_prop_eqn9}
\begin{gathered}
a^{(j)}_1 \l|1-\prod_{i=1}^n~ \frac{1}{a^{(j)}_i}\r| > j \dist\l((a^{(j)}_i)_{i=1}^n ,\R^n_1\r) \\ \text{ and } a_1^{(j)} \leq \ldots \leq a_n^{(j)} \ .
\end{gathered}
\end{equation}
Set $A_j:=\diag\l(a_1^{(j)},\ldots,a_n^{(j)}\r)$. Note 
\begin{equation} \label{MNP_prop_eqn11}
0 \underset{(\hreff{MNP_prop_eqn9})}{<}\l|1-\frac{1}{\det A_j} \r| \underset{\frac{1}{2} \leq \det A_j}{\leq} 1
\end{equation}
This implies that $A_j \not\in SL(n)$. By Lemma \hreff{MNP_lemma}, we have 
\begin{equation} \label{MNP_prop_eqn16}
\begin{gathered}
\dist\l((a^{(j)}_i)_{i=1}^n ,\R^n_1\r)=\dist\l(A_j,SL(n)\r) = \l|A_j- D_j \r|
\end{gathered}
\end{equation}
for $D_j:=\diag(d^{(j)}_1,\ldots,d^{(j)}_n)$ with $0 < d^{(j)}_1 \leq \ldots \leq d^{(j)}_n$ and $\prod_{i=1}^n~d^{(j)}_i = 1$. Therefore  
\begin{gather*}
a^{(j)}_1 \l|1-\prod_{i=1}^n~ \frac{1}{a^{(j)}_i}\r| \underset{(\hreff{MNP_prop_eqn9})}{>} j \dist\l((a^{(j)}_i)_{i=1}^n ,\R^n_1\r) \geq j\l|a^{(j)}_1 - d^{(j)}_1\r|
\end{gather*}
Since $0<a^{(j)}_1$, we obtain 
\begin{gather*}
 1 \underset{(\hreff{MNP_prop_eqn11})}{\geq} \l|1-\prod_{i=1}^n~ \frac{1}{a^{(j)}_i}\r| \geq j \l| 1 - \frac{d_1^{(j)}}{a_1^{(j)}} \r|
\end{gather*}
Therefore 
\begin{equation} \label{MNP_prop_eqn14}
\frac{1}{2} \leq \frac{a_1^{(j)}}{d_1^{(j)}} \leq 2 
\end{equation}
for all $j>1$. By (\hreff{MNP_lemma_eqn10}), we have  $d_i^{(j)} - a_i^{(j)} = \frac{\lambda^{(j)}}{d_i^{(j)}}$ for $i=1,\ldots,n$ where $\lambda^{(j)} \in \R$. This implies that $$\max\l\lb \l|d_i^{(j)}-a_i^{(j)}\r| : i=1,\ldots,n \r\rb= \max\l\lb \l| \frac{\lambda^{(j)}}{d_i^{(j)}} \r| : i=1,\ldots,n \r\rb = \l|a_1^{(j)}-d_1^{(j)}\r| \ .$$ Therefore 
\begin{equation} \label{MNP_prop_eqn15}
0 \underset{A_j \not\in SL(n)}{<} \l|D_j^{-1}\l(A_j - D_j\r)\r| \leq \frac{\sqrt{n}}{d_1^{(j)}} \l|a_1^{(j)} - d_1^{(j)}\r| \underset{(\hreff{MNP_prop_eqn14})}{\leq} \sqrt{n}
\end{equation}
We have 
\begin{equation} \label{MNP_prop_eqn10}
\begin{gathered}
\frac{1}{\l|\l|D_j^{-1}\r|\r|} \l|D_j\l(A_j - D_j\r) \r| \underset{(\hreff{MNP_prop_eqn15})}{\leq} d_1^{(j)} \cdot \frac{\sqrt{n}}{d_1^{(j)}} \l|a_1^{(j)} - d_1^{(j)}\r| \\ \underset{(\hreff{MNP_prop_eqn16})}{\leq} \sqrt{n} \dist\l((a^{(j)}_i)_{i=1}^n ,\R^n_1\r) \underset{(\hreff{MNP_prop_eqn9})}{<}\frac{\sqrt{n}}{j}~ 
a^{(j)}_1 \l|1-\prod_{i=1}^n~ \frac{1}{a^{(j)}_i}\r| \\ =\frac{\sqrt{n}}{j} \frac{1}{\l|\l|A_j^{-1}\r|\r|} \l|1-\frac{1}{\det A_j} \r| \underset{\frac{1}{2} \leq \det A_j}{\leq} \frac{2\sqrt{n}}{j} \frac{1}{\l|\l|A_j^{-1}\r|\r|} \l|1-\det A_j \r| \\ = \frac{2\sqrt{n}}{j} \frac{1}{\l|\l|A_j^{-1}\r|\r|} \l|1-\det\l(D_j + A_j - D_j \r) \r| 
\end{gathered}
\end{equation}
\begin{gather*}
\underset{\det D_j = 1}{=}   \frac{2\sqrt{n}}{j} \frac{1}{\l|\l|A_j^{-1}\r|\r|} \l|1-\det\l( 1 + D_j^{-1}\l(A_j - D_j\r) \r) \r| \\ =  \frac{2\sqrt{n}}{j} \frac{1}{\l|\l|A_j^{-1}\r|\r|} \l|1-\l( 1 + \tr\l( D_j^{-1}\l(A_j - D_j\r)\r) + \ldots + \det\l( D_j^{-1}\l(A_j - D_j\r) \r) \r)\r| \\ \leq  \frac{2\sqrt{n}}{j} \frac{1}{\l|\l|A_j^{-1}\r|\r|} \sum_{i=1}^n \binom{n}{i}\l| D_j^{-1}\l(A_j - D_j\r)\r|^i \leq \frac{2n^{n+\frac{1}{2}}}{j} \frac{1}{\l|\l|A_j^{-1}\r|\r|} \sum_{i=1}^n \l| D_j^{-1}\l(A_j - D_j\r)\r|^i
\end{gather*}
Note $0 \underset{(\hreff{MNP_prop_eqn15})}{<} \l|D^{-1}_j(A_j -D_j)\r|$. Divide both sides of (\hreff{MNP_prop_eqn10}) by $|D^{-1}_j(A_j -D_j)|$ to obtain $$\frac{1}{\l|\l|D_j^{-1}\r|\r|} \leq \frac{2n^{n+\frac{1}{2}}}{j} \frac{1}{\l|\l|A_j^{-1}\r|\r|} \sum_{i=0}^{n-1} \l| D_j^{-1}\l(A_j - D_j\r)\r|^i \ . $$ Therefore we have
\begin{gather*}
 j \leq  2n^{n+\frac{1}{2}}~ \frac{\l|\l|D^{-1}_j\r|\r|}{\l|\l|A^{-1}_j\r|\r|}  ~\sum_{i = 0}^{n-1} \l|D^{-1}_j (A_j - D_j)\r|^i \\ = 2n^{n+\frac{1}{2}} ~\DS\frac{a_1^{(j)}}{d_1^{(j)}} ~\sum_{i = 0}^{n-1} \l|D^{-1}_j (A_j - D_j)\r|^i \\ \underset{(\hreff{MNP_prop_eqn15})}{\leq} 2 n^{n+\frac{1}{2} + \frac{n}{2}+1}  ~\DS\frac{a_1^{(j)}}{d_1^{(j)}} \underset{(\hreff{MNP_prop_eqn14})}{\leq} 4 n^{n+\frac{n}{2}+\frac{3}{2}}   
\end{gather*}
This is a contradiction for $j>4 n^{n+\frac{n}{2} + \frac{3}{2}}$. Since (\hreff{MNP_prop_eqn9}) is not valid, we deduce (\hreff{MNP_prop_eqn8}). Note $C:=C(n)$ because (\hreff{MNP_prop_eqn9}) pertains to distance between sets in $\R^n$. This shows (\hreff{MNP_prop_eqn6}).

\textbf{(3)} Take $(a_j)_{j=1}^n \in \R^n_{>0}$ with $\theta \leq \det \wda \leq \frac{1}{2}$. By (\hreff{MNP_lemma_eqn10}) we have $d_j - \frac{\lambda}{d_j} = a_j$. By (\hreff{MNP_lemma_eqn12}) we have $a_j < d_j$ for $j=1,\ldots,n$ because $1 > \det \wda$. We obtain 
\begin{gather*}
\frac{d_j}{a_j} - 1 = \frac{d_j-a_j}{a_j} = \frac{\lambda}{d_j} \frac{1}{a_j} \\ \leq \frac{\lambda}{d_1} \frac{1}{a_1} = \frac{d_1-a_1}{a_1} = \frac{d_1}{a_1} - 1 
\end{gather*}
Therefore $1 \leq \frac{d_j}{a_j} \leq \frac{d_1}{a_1}$ for $1\leq j \leq n$. Note
\begin{gather*}
 \frac{1}{\theta} \leq \frac{1}{\det \wda} = \frac{\det D}{\det \wda} = \prod_{j=1}^n \frac{d_j}{a_j} \leq \l( \frac{d_1}{a_1} \r)^n \ .
\end{gather*}
This implies $\l(\frac{1}{\theta^{1/n}} -1\r)a_1 \leq d_1 - a_1$. We have 
\begin{gather*}
 a_1 = \l(\frac{1}{\theta^{1/n}} -1 \r)^{-1} \l(\frac{1}{\theta^{1/n}} -1 \r) a_1 \leq \l(\frac{1}{\theta^{1/n}} -1 \r)^{-1} (d_1 - a_1) \\ \leq \l(\frac{1}{\theta^{1/n}} -1 \r)^{-1} \l|\wda - D\r| \underset{(\hreff{MNP_prop_eqn2})}{=} \l(\frac{1}{\theta^{1/n}} -1 \r)^{-1} \dist(\wda,SL(n))  
\end{gather*}
Therefore 
\begin{equation} \label{MNP_prop_eqn7}
\begin{gathered}
\frac{1}{\l|\l|\wda^{-1} \r|\r|} \l|1 - \frac{1}{\det \wda} \r| \leq \l(\frac{1}{\theta^{1/n}} -1 \r)^{-1} \dist\l(\wda,SL(n)\r) \cdot \l|\frac{1}{\theta} - 1\r| \\ \underset{\theta \leq \frac{1}{2}}{\leq} \l(2^{\frac{1}{n}}-1 \r)^{-1} \frac{1}{\theta} \dist\l(\wda,SL(n)\r) 
\end{gathered}
\end{equation}
\textbf{(4)} Set $C_4:=\frac{1}{2}\l(1+C\r) + \l(2^{\frac{1}{n}}-1 \r)^{-1}$. Note $C_4:=C_4(n)$ because $C:=C(n)$. By (\hreff{MNP_prop_eqn6}) and (\hreff{MNP_prop_eqn7}) we have
\begin{gather*}
\frac{1}{\l|\wda^{-1} \r|} \l|1 - \frac{1}{\det \wda} \r| \leq \frac{1}{\l|\l|\wda^{-1} \r|\r|} \l|1 - \frac{1}{\det \wda} \r| \\ \leq \l(1+C+ \l(2^{\frac{1}{n}}-1 \r)^{-1}~ \frac{1}{\theta}\r)\dist\l(\wda,SL(n)\r) \underset{\theta \leq \frac{1}{2}}{\leq} \frac{C_4}{\theta} \dist\l(\wda,SL(n)\r) 
\end{gather*}
This shows (\hreff{MNP_prop_eqn1}).
\end{proof} 

\begin{corollary} \label{MNP_cor}
Take $0<\theta \leq \frac{1}{2}$. If $A \in \operatorname{Mat}^{n \times n}$ with $\det A \geq \theta >0$, then $$\frac{\theta}{C}~ \frac{1}{\l|A^{-1}\r|}\l|1-\frac{1}{\det A}\r| \leq \dist\l(A,SL(n)\r) \leq C~\frac{1}{\l|A^{-1}\r|}\l|1-\frac{1}{\det A}\r|$$ for a constant $C:=C(n)$.
\end{corollary}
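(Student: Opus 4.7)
The plan is to assemble the corollary by simply combining two ingredients that are already in place in the paper, one for each inequality. The upper bound comes essentially for free from the explicit construction in (\ref{MNP_upper_bdd}), and the lower bound is precisely the content of Proposition \ref{MNP_prop}.

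For the upper bound, I would invoke (\ref{MNP_upper_bdd}) directly: for any $A \in \operatorname{Mat}^{n\times n}$ with $\det A > 0$, the matrix obtained by replacing the smallest singular value $a_1$ by $1/(a_2 \cdots a_n)$ lies in $SL(n)$ and witnesses
$$\dist(A,SL(n)) \leq \frac{\sqrt{n}}{|A^{-1}|}\left|1 - \frac{1}{\det A}\right|.$$
This requires no hypothesis on $\theta$ whatsoever, so $\sqrt{n}$ is an admissible choice for the upper constant.

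For the lower bound, I would apply Proposition \ref{MNP_prop}, which under the hypothesis $\det A \geq \theta$ (with $0<\theta\leq \tfrac{1}{2}$) gives
$$\frac{1}{|A^{-1}|}\left|1 - \frac{1}{\det A}\right| \leq \frac{C_4}{\theta}\, \dist(A,SL(n))$$
for $C_4 := C_4(n)$. Rearranging yields
$$\frac{\theta}{C_4}\, \frac{1}{|A^{-1}|}\left|1 - \frac{1}{\det A}\right| \leq \dist(A,SL(n)).$$

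Finally, I would set $C := \max\{\sqrt{n},\, C_4\}$, which depends only on $n$, and observe that both displayed inequalities in the statement hold with this choice of constant. There is essentially no obstacle here: Proposition \ref{MNP_prop} has already absorbed the entire analytical content (the splitting into the two regions $\det A \geq \tfrac{1}{2}$ and $\theta \leq \det A \leq \tfrac{1}{2}$, and the reduction to diagonal matrices via Lemma \ref{MNP_lemma}), and (\ref{MNP_upper_bdd}) is a one-line construction. The corollary is therefore just a bookkeeping step, packaging the two-sided comparison into a single statement with a unified constant.
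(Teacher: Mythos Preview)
Your proposal is correct and essentially identical to the paper's proof: the paper also simply combines the upper bound (\ref{MNP_upper_bdd}) with Proposition \ref{MNP_prop}, taking $C := \sqrt{n} + C_4$ rather than your $\max\{\sqrt{n}, C_4\}$, which is an immaterial difference.
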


\begin{proof}
Set $C:=\sqrt{n} + C_4$. Note $C:=C(n)$ because $C_4:=C_4(n)$. We have 
\begin{gather*}
\frac{\theta}{C} \frac{1}{\l|A^{-1}\r|}\l|1-\frac{1}{\det A}\r| \leq \frac{\theta}{C_4} \frac{1}{\l|A^{-1}\r|}\l|1-\frac{1}{\det A}\r| \\ \underset{(\hreff{MNP_prop_eqn0})}{\leq} \dist\l(A,SL(n)\r) \\ \underset{(\hreff{MNP_upper_bdd})}{\leq} \frac{\sqrt{n}}{|A^{-1}|} \l|1 - \frac{1}{\det A}\r| \leq C~\frac{1}{|A^{-1}|} \l|1 - \frac{1}{\det A}\r|\ .
\end{gather*}
\end{proof}

\vspace{-0.7cm}

\section{Special linear group and symplectic group} \label{EFA}
   
\subsection{Special linear group} \label{SLG}

The main results of Section \hreff{SLG} are Corollary \hreff{SLG_cor} and Proposition \hreff{SLG_linear}. We generalize Proposition \hreff{OTM_prop} and Proposition \hreff{OTM_prop_1} in Proposition \hreff{SLG_prop} to remove dependence on the multiplicity function and the diameter of the image. Incorporating Proposition  \hreff{MNP_prop}, we can show Corollary \hreff{SLG_cor}. Recall that Corollary \hreff{SLG_cor} allows us to extend (\hreff{BACK_fact}) beyond $n=1$ (cf. (\hreff{OTM_defn_mult_K_eqn})). Proposition \hreff{SLG_linear} treats (\hreff{BACK_bdd_linear}).

\begin{proposition} \label{SLG_prop}
Let $1 \leq p < \infty$. Take $u\in W^{1,\infty}\l(U,\R^n\r)$ with $0 < \lambda \leq \det Du(x) \leq \Lambda$ for a.e. $x \in U$. There exists $s \in L^{\infty}(U,\R^{n})$ differentiable at a.e. $x \in U$ with $\det Ds(x) = 1$ such that $$\int_U~ \l|u - s \r|^p\ dx \leq C \int_U~ \l|1-\det Du\r|^p\ dx$$ where $C=C_5 \l(\frac{1}{\lambda}\r)^{\frac{p}{n}} \l(1+ \frac{1}{\lambda^p} \l(\frac{\Lambda}{\lambda} \r)^{2p-2}\r)$ for a constant $C_5:=C_5(n,p)$.
\end{proposition}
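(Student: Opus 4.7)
The plan is to reduce Proposition~\hreff{SLG_prop} to Corollary~\hreff{OTM_cor} (for $1<p<\infty$) or Corollary~\hreff{OTM_cor_1} (for $p=1$) by partitioning the target space into cubes of fixed size. The constants in those corollaries have the form $C\l(\frac{d}{\lambda^{1/n}}\r)^p\l(1+\frac{1}{\lambda^p}\l(\frac{\Lambda}{\lambda}\r)^{2p-2}\r)$, so if we work on preimages of cubes of side length $1$ the diameter $d$ of each image is replaced by a dimensional constant $\sqrt{n}$, producing exactly the prefactor $(1/\lambda)^{p/n}(1+(1/\lambda)^p(\Lambda/\lambda)^{2p-2})$ claimed here. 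The looser conclusion $\det Ds\equiv 1$ (rather than measure preserving) is what makes the patching compatible: even if the pieces have overlapping images, the patched $s$ is still locally volume preserving.

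Concretely, I would first partition $\R^n$ into half-open unit cubes $\{Q_i\}$ and set $V_i=u^{-1}(\operatorname{int} Q_i)$. Since $\det Du\geq\lambda>0$ bounds $K(Du)\leq\l|\l|Du\r|\r|_{L^\infty}^n/\lambda$, the map $u$ is open, so each $V_i$ is open; moreover $\operatorname{diam}(u(V_i))\leq\sqrt{n}$ and $\bigsqcup_i V_i=U$ up to a null set. On pieces where $u\mid_{V_i}$ is essentially injective, Corollary~\hreff{OTM_cor} yields $s_i\in W^{1,\infty}(V_i,\R^n)$ essentially injective and measure preserving with
\[
\int_{V_i}\l|u-s_i\r|^p\,dx\leq C_3\l(\tfrac{\sqrt{n}}{\lambda^{1/n}}\r)^p\l(1+\tfrac{1}{\lambda^p}\l(\tfrac{\Lambda}{\lambda}\r)^{2p-2}\r)\int_{V_i}\l|1-\det Du\r|^p\,dx .
\]
Define $s\mid_{V_i}=s_i$; since each $s_i$ has Jacobian identically one, the same holds for $s$ on $U$. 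Summing over $i$ (after suitably translating the pieces so that $s\in L^\infty$) and setting $C_5=C_3 n^{p/2}$ gives the claim.

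The main obstacle is the multiplicity: on pieces $V_i$ where $u$ has multiplicity $>1$, Corollary~\hreff{OTM_cor} does not apply directly. To handle this, one uses the stratification by multiplicity. Since $\det Du\geq\lambda>0$ and $u(U)$ is bounded, the area formula gives $\int N(y)\,dy\leq\Lambda\l|U\r|<\infty$, where $N(y)=\mathcal{H}^0(u^{-1}(y))$; hence $N$ is finite a.e., and a measurable selection partitions $V_i$ into countably many sheets $V_i^{(k)}$ on each of which $u$ is essentially injective. Corollary~\hreff{OTM_cor} (via an interior approximation of each sheet by open Lipschitz subdomains, with estimates uniform in the approximation) is applied sheet-by-sheet, and the pieces are patched as before; overlapping images of distinct sheets are allowed because only $\det Ds\equiv 1$ is required. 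The case $p=1$ is identical in structure using Corollary~\hreff{OTM_cor_1}, whose constant $\frac{d}{\lambda^{1/n}}(1+\frac{1}{\lambda})$ yields the right form under the same subdivision.
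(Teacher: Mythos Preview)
Your plan has a genuine gap in the handling of multiplicity. Corollaries~\hreff{OTM_cor} and~\hreff{OTM_cor_1} are stated for domains satisfying the paper's standing hypothesis on $U$: open, bounded, connected, with Lipschitz boundary. The preimages $V_i=u^{-1}(\operatorname{int}Q_i)$ are open but typically neither connected nor Lipschitz. Worse, the ``sheets'' $V_i^{(k)}$ produced by a measurable selection of preimages are in general only measurable sets, not open ones; there is no reason they should admit an interior approximation by Lipschitz subdomains with estimates uniform in the approximation. That line is doing all the work and is not a routine step---a merely measurable sheet can be arbitrarily wild, and nothing in the area formula or in the finiteness of $N(y)$ furnishes such a structure.

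The paper avoids this entirely by working in the \emph{domain} rather than in the target. It first approximates $u$ in the Whitney--Lusin sense by a $C^1$ map $v$ with $\{u\neq v\text{ or }Du\neq Dv\}$ of measure at most $\delta_1$. On the open set $V=\{\det Dv>\lambda/2\}$ the inverse function theorem makes $v$ locally injective, so a Vitali covering of $V$ by small closed balls $\overline{B}_{r_{i,j}}(x_{i,j})$ (with $r_{i,j}\leq 1/(2C^{(0)}\operatorname{Lip}(u))$ so that $\operatorname{diam}v(B_{r_{i,j}}(x_{i,j}))\leq 1$) yields finitely many disjoint balls whose complement in $V$ has measure $\leq\delta_2$. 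Corollary~\hreff{OTM_cor} (or~\hreff{OTM_cor_1}) is applied to $v$ on each ball---balls trivially satisfy the domain hypotheses, and $v$ is genuinely injective there---and $s$ is patched from the resulting $s_{i,j}$, set equal to the identity on the uncovered part. The discrepancies $|u-v|$, $|\det Du-\det Dv|$, and the measures $|U\setminus V|$, $|V\setminus\bigcup B_{r_{i,j}}(x_{i,j})|$ are all absorbed into $\delta$ through the choice of $\delta_1,\delta_2$. The key point you are missing is that local injectivity comes for free from $C^1$ regularity plus $\det Dv\neq 0$, which eliminates any need for a sheet decomposition.
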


\begin{proof}
Step (1) rephrases the estimate. Step (2) provides a covering of $U$. Step (3) applies Corollary \hreff{OTM_cor} and Corollary \hreff{OTM_cor_1} to the sets in the cover.

\textbf{(1)} Let $C^{(0)}:=C^{(0)}(n)$ denote the constant from \cite[p.~251]{EG}. Let 
\begin{equation} \label{SLG_prop_constant_1}
C^{(1)}:=10~\l(\frac{1}{\lambda}\r)^{\frac{1}{n}} \l(1+ \frac{1}{\lambda} \r)
\end{equation}
denote the constant from Corollary \hreff{OTM_cor_1} with $d=1$, and 
\begin{equation} \label{SLG_prop_constant_p}
C^{(p)}:=C_3\l(\frac{1}{\lambda}\r)^{\frac{p}{n}} \l(1+ \frac{1}{\lambda^p} \l(\frac{\Lambda}{\lambda} \r)^{2p-2}\r)
\end{equation}
denote the constant from Corollary \hreff{OTM_cor} with $d=1$. For $1\leq p <\infty$, set $\delta := C^{(p)} \int_U~\l|1-\det Du\r|^p\ dx$. Set $C_5:=\l(1+6 \cdot 2^{2p} \r)\l(10+C_3\r)$. Note that $C_5:=C_5(n,p)$ because $C_3:=C_3(n,p)$. It suffices to show that 
\begin{equation} \label{SLG_prop_eqn0}
 \int_U~\l|u-s\r|^p~dx \leq \l(1+6 \cdot 2^{2p} \r) \delta
\end{equation}
for some $s \in L^{\infty}\l(U,\R^n\r)$ differentiable for a.e. $x \in U$ with $\det Ds(x) =1$. Note $\delta =0$ if and only $\det Du(x) = 1$ for a.e. $x \in U$. If $\delta = 0$, then set $s=u$. Otherwise, we can assume that $\delta >0$. Choose $\delta_1>0$ such that 
\begin{equation} \label{SLG_prop_eqn1}
\begin{gathered}
  \delta_1~C^{(p)}~\l(1+\l(C^{(0)}\r)^n \r)^p~ \operatorname{Lip}^{pn}(u) \leq \delta \ ,\\ \delta_1 ~\l(1+ \l(C^{(0)}\r)^p\r)~ \operatorname{Lip}^p(u) ~\diam^p(U) \leq \delta \ ,\\ \text{ and } \delta_1 ~\l(\l|\l|u \r|\r|_{L^{\infty}(U)} + \underset{x \in U}{\sup}~\l|x\r| \r)^p \leq \delta
\end{gathered}
\end{equation}
Choose $\delta_2>0$ such that 
\begin{equation} \label{SLG_prop_eqn2}
\begin{gathered}
\delta_2 ~\l(C^{(0)}\r)^p ~\operatorname{Lip}^p(u)~ \diam^p(U) \leq \delta \ ,\\ \text{ and } \delta_2~ \l(\l|\l|u \r|\r|_{L^{\infty}(U)} + \underset{x \in U}{\sup}~\l|x\r| \r)^p \leq \delta 
\end{gathered}
\end{equation}
Extend $u:U \to \R^n$ to $\widetilde{u}:\R^n \to \R^n$ with $\operatorname{Lip}\l(\widetilde{u}\r) = \operatorname{Lip}(u)$ \cite[p.~80]{EG}. There exists $v \in C^1\l(\R^n,\R^n\r)$ with 
\begin{equation} \label{SLG_prop_eqn3}
\l|\l\lb x \in \R^n~:~\widetilde{u}(x) \neq v(x) \text{ or } D\widetilde{u}(x) \neq Dv(x) \r\rb \r| \leq \delta_1
\end{equation}
and $\underset{x \in \R^n}{\sup}~\l|Dv(x)\r| \leq C^{(0)} \operatorname{Lip}(u)$ \cite[p.~251]{EG}. 

\textbf{(2)} Set $V:=\l\lb x \in U~:~\det Dv > \frac{\lambda}{2} \r\rb$. Note that $V \subset U$ open. Note that $\lambda \leq \det Du(x)$ for a.e. $x \in U$ implies 
\begin{equation} \label{SLG_prop_eqn9}
\l|U - V\r| \underset{(\hreff{SLG_prop_eqn3})}{\leq} \delta_1 
\end{equation}
Fix $x_0 \in U$ such that $u(x_0)=v(x_0)$. Choose $M>0$ such that 
\begin{equation} \label{SLG_prop_eqn4}
\l(1-\frac{1}{10^n}\r)^M \l|V\r| \leq \delta_2 \ . 
\end{equation}
For any $x \in V$, the inverse function theorem implies that $v: \overline{B}_r(x) \to \R^n$ injective for $0 < r\ll 1$ because  $\det Dv(x) > 0$ \cite[p.~447]{GT}. Set $V^{(1)} := V$ and $$\mathcal{F}^{(1)}:=\l\lb \overline{B}_r(x) \subset V^{(1)} : r \leq \frac{1}{2C^{(0)} \operatorname{Lip}(u)} \text{ and } v:\overline{B}_r(x) \to \R^n \text{ injective} \r\rb$$ The Vitali covering lemma \cite[p.~27]{EG} implies the existence of a countable subset $\mathcal{G}^{(1)} \subset \mathcal{F}^{(1)}$ of disjoint balls such that $V^{(1)} \subset \underset{\overline{B}_r(x) \in \mathcal{G}^{(1)}}{\DS \cup} \overline{B}_{5r}(x)$. Note that 
\begin{gather*}
 \l|V^{(1)}\r| \leq \underset{\overline{B}_r(x) \in \mathcal{G}^{(1)}}{\sum}\l|\overline{B}_{5r}(x)\r| \\ = 5^n \underset{\overline{B}_r(x) \in \mathcal{G}^{(1)}}{\sum}\l|\overline{B}_{r}(x)\r| \underset{\text{disjoint balls}}{=} 5^n \l| \underset{\overline{B}_r(x) \in \mathcal{G}^{(1)}}{\DS \cup} \overline{B}_{r}(x) \r|  
\end{gather*}
This implies that $\frac{1}{5^n} \l|V^{(1)} \r| \leq \l|\underset{\overline{B}_r(x) \in \mathcal{G}^{(1)}}{\DS \cup} \overline{B}_r(x) \r|$. Therefore $\l|V^{(1)} - \underset{\overline{B}_r(x) \in \mathcal{G}^{(1)}}{\DS \cup} \overline{B}_r(x) \r| \leq \l(1- \frac{1}{5^n}\r)\l|V^{(1)}\r|$. Since $\mathcal{G}^{(1)}$ is countable, there exist $\l\lb \overline{B}_{r_{1,j}}\l(x_{1,j}\r) \r\rb_{j=1}^{m_1} \subset \mathcal{G}^{(1)}$ such that $$\l|V^{(1)} - \DS \cup_{j=1}^{m_1} \overline{B}_{r_{1,j}}\l(x_{1,j}\r)\r| \leq \l(1- \frac{1}{10^n}\r) \l|V^{(1)} \r|$$ Inductively, take $V^{(k)} := V^{(k-1)} - \DS \cup_{i=1}^{k-1} \DS \cup_{j=1}^{m_i} \overline{B}_{r_{i,j}}\l(x_{i,j}\r)$ for $k>1$. Set $$\mathcal{F}^{(k)}:=\l\lb \overline{B}_r(x) \subset V^{(k)} : r \leq \frac{1}{2C^{(0)} \operatorname{Lip}(u)} \text{ and } v:\overline{B}_r(x) \to \R^n \text{ injective} \r\rb \ .$$ The Vitali covering lemma \cite[p.~27]{EG} implies the existence of a countable subset $\mathcal{G}^{(k)} \subset \mathcal{F}^{(k)}$ of disjoint balls such that $\l|V^{(k)} - \underset{\overline{B}_r(x) \in \mathcal{G}^{(k)}}{\DS \cup} \overline{B}_r(x) \r| \leq \l(1-\frac{1}{5^n}\r)\l|V^{(k)}\r|$. Since $\mathcal{G}^{(k)}$ is countable, there exist $\l\lb \overline{B}_{r_{k,j}}\l(x_{k,j}\r) \r\rb_{j=1}^{m_k} \subset \mathcal{G}^{(k)}$ such that $$\l|V^{(k)} - \DS \cup_{j=1}^{m_k} \overline{B}_{r_{k,j}}\l(x_{k,j}\r)\r| \leq \l(1- \frac{1}{10^n}\r) \l|V^{(k)} \r|$$ This implies that 
\begin{gather*}
 \l|V-\DS \cup_{i=1}^k \DS \cup_{j=1}^{m_i} \overline{B}_{r_{i,j}}\l(x_{i,j}\r) \r| = \l|V^{(k)} -  \DS \cup_{j=1}^{m_k} \overline{B}_{r_{k,j}}\l(x_{k,j}\r)\r| \\ \leq \l(1-\frac{1}{10^n}\r)\l|V^{(k)}\r| = \l(1-\frac{1}{10^n}\r)\l|V^{(k-1)} -\DS \cup_{j=1}^{m_{k-1}} \overline{B}_{r_{k-1,j}}\l(x_{k-1,j}\r) \r| \\ \leq \l(1-\frac{1}{10^n}\r)^2\l|V^{(k-1)}\r| \leq \ldots \ldots \leq \l(1-\frac{1}{10^n}\r)^k\l|V^{(1)}\r| = \l(1-\frac{1}{10^n}\r)^k\l|V\r|
\end{gather*}
Therefore 
\begin{equation} \label{SLG_prop_eqn5}
\l|V-\DS \cup_{i=1}^M \DS \cup_{j=1}^{m_i} \overline{B}_{r_{i,j}}\l(x_{i,j}\r) \r| \underset{(\hreff{SLG_prop_eqn4})}{\leq} \delta_2 
\end{equation}
Note that $r_{i,j} \leq \DS\frac{1}{2C^{(0)} \operatorname{Lip}(u)}$ implies 
\begin{gather*}
\l|v(x) - v(y) \r| \leq \l|\l|Dv\r|\r|_{L^{\infty}\l(\R^n,\R^n\r)} \l|x-y\r| \underset{(\hreff{SLG_prop_eqn3})}{\leq} 2r_{i,j}~C^{(0)}~ \operatorname{Lip}(u) \leq 1
\end{gather*}
for all $x,y \in B_{r_{i,j}}\l(x_{i,j}\r)$. Therefore 
\begin{equation} \label{SLG_prop_eqn6}
\diam v\l(B_{r_{i,j}}\l(x_{i,j}\r)\r) \leq 1 \ . 
\end{equation}
\textbf{(3)} By Corollary \hreff{OTM_cor} and Corollary \hreff{OTM_cor_1} there exist $s_{i,j} \in W^{1,\infty}\l(B_{r_{i,j}}\l(x_{i,j}\r),\R^n\r)$ essentially injective measure preserving such that 
\begin{equation} \label{SLG_prop_eqn7}
\int_{B_{r_{i,j}}\l(x_{i,j}\r)}~\l|v-s_{i,j} \r|^p\ dx \underset{(\hreff{SLG_prop_eqn6})}{\leq} C^{(p)} \int_{B_{r_{i,j}}\l(x_{i,j}\r)}~ \l|1-\det Dv \r|^p\ dx \ .
\end{equation}
Note that $\det Ds_{i,j}(x) =1$ for a.e. $x \in B_{r_{i,j}}\l(x_{i,j}\r)$ (cf. Definition \hreff{OTM_defn_mult_K}). Set 
\begin{gather*}
s(x):=\begin{cases} s_{i,j}(x) & \text{ for } x \in B_{r_{i,j}}\l(x_{i,j}\r) \\ x & \text{ for } U - \DS \cup_{i=1}^M \DS \cup_{j=1}^{m_i}  B_{r_{i,j}}\l(x_{i,j}\r) \end{cases}
\end{gather*}
Note $s \in L^{\infty}(U,\R^n)$. Since $\l|\DS \cup_{i=1}^M \DS \cup_{j=1}^{m_i}  \partial B_{r_{i,j}}\l(x_{i,j}\r) \r| = 0$, this implies that $\det Ds(x) =1$ for a.e. $x \in U$ and 
\begin{equation} \label{SLG_prop_eqn8}
\l|V-\DS \cup_{i=1}^M \DS \cup_{j=1}^{m_i} B_{r_{i,j}}\l(x_{i,j}\r) \r| \underset{(\hreff{SLG_prop_eqn5})}{\leq} \delta_2 \ .
\end{equation}
We have 
\begin{gather*}
 \int_U~\l|u-s\r|^p\ dx \leq 2^p \int_U~\l|u-v\r|^p\ dx + 2^p \int_U~\l|v-s\r|^p\ dx \\ = 2^p \int_{\l\lb x\in U~:~u(x) \neq v(x) \r\rb}\l|u-v\r|^p\ dx + 2^p \int_U~\l|v-s\r|^p\ dx \\ \underset{u(x_0) = v(x_0)}{\leq} 2^{2p} \int_{\l\lb x\in U~:~u(x) \neq v(x) \r\rb}\l|u-u(x_0)\r|^p\ dx + 2^{2p} \int_{\l\lb x\in U~:~u(x) \neq v(x) \r\rb}\l|v-v(x_0)\r|^p\ dx + \\ + 2^p \int_U~\l|v-s\r|^p\ dx \\ \underset{(\hreff{SLG_prop_eqn3})}{\leq} 2^{2p}\Lip^p(u) \diam^p(U) \l|\l\lb x\in U~ :~u(x) \neq v(x) \r\rb\r| + \\ + 2^{2p}    \l(C^{(0)}\r)^p \Lip^p(u) \diam^p(U) \l|\l\lb x\in U : u(x) \neq v(x) \r\rb\r| + 2^p \int_U~\l|v-s\r|^p\ dx \\ \underset{(\hreff{SLG_prop_eqn3})}{\leq}  2^{2p}\l(1+\l(C^{(0)}\r)^p\r) \Lip^p(u) \diam^p(U) ~\delta_1 + 2^p \sum_{i=1}^M \sum_{j=1}^{m_i}\ \int_{B_{r_{i,j}}\l(x_{i,j}\r)} \l|v - s_{i,j} \r|^p\ dx + \\ + 2^p \int_{U-\DS \cup_{i=1}^M \DS \cup_{j=1}^{m_i} B_{r_{i,j}}\l(x_{i,j}\r)}\l|v-x\r|^p\ dx \\ \underset{(\hreff{SLG_prop_eqn1})}{\leq}  \delta + 2^p \sum_{i=1}^M \sum_{j=1}^{m_i}\ \int_{B_{r_{i,j}}\l(x_{i,j}\r)} \l|v - s_{i,j} \r|^p\ dx + \\ + 2^p \int_{U-\DS \cup_{i=1}^M \DS \cup_{j=1}^{m_i} B_{r_{i,j}}\l(x_{i,j}\r)}\l|v-x\r|^p\ dx \\
 \underset{(\hreff{SLG_prop_eqn7})}{\leq} \delta + 2^p C^{(p)} \sum_{i=1}^M \sum_{j=1}^{m_i} \int_{B_{r_{i,j}}\l(x_{i,j}\r)} \l|1-\det Dv\r|^p\ dx + \\ + 2^p \int_{U-\DS \cup_{i=1}^M \DS \cup_{j=1}^{m_i} B_{r_{i,j}}\l(x_{i,j}\r)} \l|v-x\r|^p\ dx \\ \underset{\text{disjoint balls}}{\leq} \delta + 2^p C^{(p)}  \int_{U} \l|1-\det Dv\r|^p\ dx + 2^{2p} \int_{U-\DS \cup_{i=1}^M \DS \cup_{j=1}^{m_i} B_{r_{i,j}}\l(x_{i,j}\r)} \l|v-v(x_0)\r|^p\ dx + \\ + 2^{2p} \int_{U-\DS \cup_{i=1}^M \DS \cup_{j=1}^{m_i} B_{r_{i,j}}\l(x_{i,j}\r)} \l|x-u(x_0)\r|^p\ dx 
 \end{gather*}
 \begin{gather*}
  \underset{(\hreff{SLG_prop_eqn3})}{\leq} \delta + 2^{2p} C^{(p)}  \int_{U} \l|1-\det Du\r|^p\ dx + 2^{2p} C^{(p)}  \int_{U} \l|\det Du-\det Dv\r|^p\ dx + \\ + 2^{2p} \l(C^{(0)}\r)^p\Lip^p(u)\diam^p(U) \l|U-\DS \cup_{i=1}^M \DS \cup_{j=1}^{m_i} B_{r_{i,j}}\l(x_{i,j}\r)\r| + \\ + 2^{2p}\l( \l|\l|u\r|\r|_{L^{\infty}(U)} + \underset{x \in U}{\sup}~\l|x\r| \r)^p\l|U-\DS \cup_{i=1}^M \DS \cup_{j=1}^{m_i} B_{r_{i,j}}\l(x_{i,j}\r)\r| \\ \underset{(\hreff{SLG_prop_eqn8}) \text{ and } (\hreff{SLG_prop_eqn9})}{\leq} \l(1+2^{2p}\r)\delta + 2^{2p} C^{(p)} \int_{\l\lb x \in U~:~Du(x) \neq Dv(x) \r\rb} \l| \det Du - \det Dv \r|^p\ dx + \\ + 2^{2p}\l(C^{(0)}\r)^p\Lip^p(u)\diam^p(U) \l(\delta_1 + \delta_2\r) + 2^{2p}\l( \l|\l|u\r|\r|_{L^{\infty}(U)} + \underset{x \in U}{\sup}~\l|x\r| \r)^p\l(\delta_1+\delta_2\r) \\ \underset{(\hreff{SLG_prop_eqn1}) \text{ and } (\hreff{SLG_prop_eqn2})}{\leq} (1+2^{2p})\delta + 2^{2p} C^{(p)} \l(\Lip^n(u) + \Lip^n(v) \r)^p \l| \l\lb x \in U : Du(x) \neq Dv(x) \r\rb \r| + \\ + 2^{2p}\l(\delta + \delta\r) + 2^{2p}\l(\delta + \delta\r) \\ \underset{(\hreff{SLG_prop_eqn3})}{\leq} (1+5 \cdot 2^{2p})\delta +  2^{2p} C^{(p)} \l(\Lip^n(u) + \l(C^{(0)}\r)^n \Lip^n(u) \r)^p \l| \l\lb x \in U : Du(x) \neq Dv(x) \r\rb \r| \\ \underset{(\hreff{SLG_prop_eqn3})}{\leq} (1+5 \cdot 2^{2p})\delta + 2^{2p}C^{(p)} \Lip^{pn}(u)\l(1+\l(C^{(0)}\r)^n\r)^p\delta_1 \\ \underset{(\hreff{SLG_prop_eqn1})}{\leq} \l(1+6 \cdot 2^{2p}\r)\delta
\end{gather*}
This shows (\hreff{SLG_prop_eqn0}). 
\end{proof}

\begin{corollary} \label{SLG_cor}
Let $1 \leq p < \infty$. Take $u \in W^{1,\infty}\l(U,\R^n\r)$ with $0 < \lambda \leq \det Du(x) \leq \Lambda$ for a.e. $x \in U$. There exists $s \in L^{\infty}(U,\R^{n})$ differentiable for a.e. $x \in U$ with $\det Ds(x) = 1$ such that $$\int_U~ \l|u - s \r|^p\ dx \leq C \int_U~ K^p\l(Du\r) \dist^p\l(Du,SL(n)\r)\ dx$$ for a constant $C:=C(n,p,\lambda,\Lambda)$.
\end{corollary}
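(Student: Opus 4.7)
The plan is to combine Proposition \hreff{SLG_prop} with the matrix-nearness estimate from Corollary \hreff{MNP_cor}, upgrading the pointwise quantity $|1-\det Du|$ to the weighted distance $K(Du) \dist(Du, SL(n))$.

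First, I would invoke Proposition \hreff{SLG_prop} to obtain $s \in L^{\infty}(U,\R^n)$ differentiable a.e.\ with $\det Ds(x) = 1$ such that
$$\int_U~ |u - s|^p\ dx \leq C(n,p,\lambda,\Lambda) \int_U~ |1 - \det Du|^p\ dx \ . $$
It then suffices to establish the pointwise bound
$$|1 - \det Du(x)|^p \leq C(n,p,\lambda,\Lambda)~ K^p(Du(x)) \dist^p(Du(x), SL(n))$$
for a.e.\ $x \in U$, and integrate.

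To prove the pointwise bound, set $\theta := \min\{\lambda, \tfrac{1}{2}\}$, so that $\theta \in (0, \tfrac{1}{2}]$ and $\det Du(x) \geq \lambda \geq \theta$ a.e. Corollary \hreff{MNP_cor} applied to $A = Du(x)$ gives
$$\frac{1}{|Du^{-1}(x)|}\l|1 - \frac{1}{\det Du(x)} \r| \leq \frac{C(n)}{\theta}~ \dist(Du(x), SL(n)) \ .$$
Multiplying both sides by $\det Du(x) \cdot |Du^{-1}(x)|$ yields
$$|1 - \det Du(x)| \leq \frac{C(n)}{\theta}~ \det Du(x) \cdot |Du^{-1}(x)|~ \dist(Du(x), SL(n)) \ .$$
The adjugate identity $\det A \cdot A^{-1} = \operatorname{adj}(A)$, together with Hadamard's bound on each $(n-1)\times(n-1)$ minor, gives $|\det A \cdot A^{-1}| \leq C(n)~ |A|^{n-1}$ for all invertible $A$.

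Finally, I would convert $|Du|^{n-1}$ into $K(Du)$ using the hypotheses on $\det Du$. Since $\det Du(x) \geq \lambda$ implies $|Du(x)| \geq (\det Du(x))^{1/n} \geq \lambda^{1/n}$ and $\det Du(x) \leq \Lambda$, we have
$$|Du(x)|^{n-1} = \frac{|Du(x)|^n}{|Du(x)|} = \frac{K(Du(x))~ \det Du(x)}{|Du(x)|} \leq \frac{\Lambda}{\lambda^{1/n}}~ K(Du(x)) \ .$$
Chaining these estimates, raising to the $p$-th power, integrating over $U$, and combining with the application of Proposition \hreff{SLG_prop} gives the stated inequality with $C := C(n,p,\lambda,\Lambda)$. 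There is no significant obstacle here beyond carefully tracking constants; the only delicate point is the use of the $\theta \leq \tfrac{1}{2}$ hypothesis in Corollary \hreff{MNP_cor}, which is handled by taking $\theta = \min\{\lambda, \tfrac{1}{2}\}$.
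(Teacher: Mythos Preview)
Your proposal is correct and follows essentially the same approach as the paper: apply Proposition \ref{SLG_prop} and then establish the pointwise bound $|1-\det A| \leq C(n,\lambda,\Lambda)\,K(A)\,\dist(A,SL(n))$ via Proposition \ref{MNP_prop} with $\theta=\min\{\lambda,\tfrac12\}$. The only cosmetic difference is that you bound $\det A\cdot|A^{-1}|$ through the adjugate identity and Hadamard, whereas the paper works directly with singular values (using $\tfrac{a_n}{a_1}\leq K(A)$ and $a_1\leq \sqrt{n}/|A^{-1}|$); both routes yield the same inequality.
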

\begin{proof}
\textbf{(1)} Take $A \in \operatorname{Mat}^{n \times n}$ with $\lambda \leq \det A \leq \Lambda$. Let $0 \leq a_1 \leq \ldots \leq a_n$ denote the singular values of $A$. Note that 
\begin{equation} \begin{gathered} \label{SLG_cor_bdd1}
 \frac{a_n}{a_1} = \frac{a_n^n}{a_1 a_n^{n-1}} \leq \frac{a_n^n}{a_1 \cdots a_n} = \frac{a_n^n}{\det A} \leq K(A)
\end{gathered} \end{equation}
Note $\lambda \leq a_1 \cdots a_n \leq a_n^n$ implies $\lambda^{\frac{1}{n}} \leq a_n$. By 
Proposition \hreff{MNP_prop}, we have 
\begin{equation} \begin{gathered} \label{SLG_cor_bdd4}
 \frac{1}{\l|A^{-1}\r|} \l|1-\frac{1}{\det A} \r| \leq \frac{C_4}{ \operatorname{min}\l\lb \lambda, \frac{1}{2} \r\rb} \dist\l(A,SL(n)\r)
\end{gathered} \end{equation}
Note 
\begin{equation} \label{SLG_cor_bdd3}
 a_1 = \frac{1}{\l|\l| \l(A A^T \r)^{-\frac{1}{2}} \r|\r|} \leq \frac{\sqrt{n}}{\l| \l(A A^T \r)^{-\frac{1}{2}} \r|} \underset{\l(A A^T \r)^{-\frac{1}{2}} A \in O(n)}{=} \frac{\sqrt{n}}{\l|A^{-1}\r|}
\end{equation}
We have 
\begin{gather*}
K(A) \dist\l(A,SL(n)\r) \underset{\text{(\hreff{SLG_cor_bdd1})}}{\geq} \frac{a_n}{a_1} \dist\l(A,SL(n)\r) \\ \underset{(\hreff{SLG_cor_bdd4})}{\geq} \frac{a_n}{a_1}~ \frac{\operatorname{min}\l\lb \lambda, \frac{1}{2} \r\rb}{C_4} \frac{1}{\l|A^{-1}\r|} \l|1-\frac{1}{\det A}\r| \underset{(\hreff{SLG_cor_bdd3})}{\geq} \frac{a_n}{a_1}~ \frac{\operatorname{min}\l\lb \lambda, \frac{1}{2} \r\rb}{C_4} \frac{a_1}{\sqrt{n}}  \l|1-\frac{1}{\det A}\r| \\ = \frac{a_n}{\sqrt{n}}~ \frac{\operatorname{min}\l\lb \lambda, \frac{1}{2} \r\rb}{C_4} \frac{1}{\det A}  \l|1-\det A\r| \underset{\lambda^{\frac{1}{n}}\leq a_n}{\geq} \frac{\lambda^{\frac{1}{n}}}{\Lambda} ~ \frac{\operatorname{min}\l\lb \lambda, \frac{1}{2} \r\rb}{C_4}  \l|1-\det A\r|
\end{gather*}
Therefore 
\begin{equation} \begin{gathered} \label{SLG_cor_bdd2}
\l|1-\det A\r| \leq C_6 ~K(A)\dist\l(A,SL(n)\r) 
\end{gathered} \end{equation}
where $C_6:=C_6(n,\lambda,\Lambda)$. 

\textbf{(2)} By Proposition \hreff{SLG_prop} there exists $s \in L^{\infty}\l(U,\R^n\r)$ differentiable for a.e. $x \in U$ with $\det Ds(x) = 1$ such that $$\int_U~\l|u-s\r|^p\ dx \leq C \int_U~ \l|1-\det Du\r|^p\ dx$$ where $C:=C(n,p,\lambda,\Lambda)$. Therefore 
\begin{gather*}
 \int_U~\l|u-s\r|^p\ dx \leq C \int_U~\l|1-\det Du\r|^p\ dx \\ \underset{(\hreff{SLG_cor_bdd2})}{\leq} C\int_U~ K^p(Du) \dist^p\l(Du,SL(n)\r)\ dx 
\end{gather*}
where $C:=C(n,p,\lambda,\Lambda)$.
\end{proof}
Recall that for $A \in \operatorname{Mat}^{n \times n}$ we have $\det\l(I + \epsilon A\r) = 1 + \epsilon \tr A + o(\epsilon)$ because $D \det\l(I\r) : A = \operatorname{cof} I : A = \tr A$. Therefore $sl(n) := \l\lb A \in \operatorname{Mat}^{n\times n} : \operatorname{tr} A =0\r\rb$ is a linear approximation to $SL(n)$ near the identity matrix. Near the identity map we can approximate the collection of maps with Jacobian equal to one by flows of vector fields with divergence zero. Therefore an estimate comparable to Corollary \hreff{OTM_cor} and Corollary \hreff{OTM_cor_1} should bound the deviation of a map from divergence zero maps by the deviation of its derivative from trace zero matrices.
  
\begin{proposition} \label{SLG_linear}
Take $u \in W^{1,p}(U,\R^n)$. 

\textbf{(1)} Let $1<p<\infty$. There exists $v \in W^{1,p}(U,\R^n)$ with $\div v =0$ such that$$\int_U~ \l|Du - Dv\r|^p\ dx \leq C \int_U~ \dist^p\l(Du,sl(n)\r)\ dx$$ where $C:=C(n,p)$.

\textbf{(2)} Let $p=1$. There exists $v \in W^{1,1}(U,\R^n)$ with $\div v =0$ in $U$ such that $$\int_U~ \l|u-v\r|\ dx \leq C\int_U~ \dist\l(Du,sl(n)\r)\ dx$$ where $C:=C(n, \diam U)$. 
\end{proposition}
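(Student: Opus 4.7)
The identification $\dist(A, sl(n)) = |\tr A|/\sqrt{n}$ (because $sl(n)$ is the orthogonal complement of $\operatorname{span}\{I\}$ in $\operatorname{Mat}^{n\times n}$ with the Hilbert-Schmidt inner product) shows that both right-hand sides are, up to a factor of $n^{p/2}$, equal to $\int_U |\div u|^p\,dx$. The plan in both parts is the same Helmholtz-type construction: extend $f := \div u$ by zero to $\R^n$, set $\phi := \Phi * f$ where $\Phi$ is the fundamental solution of $-\Delta$, so that $\Delta \phi = f$ on $\R^n$, and define $v := u - D\phi$. Then $\div v = \div u - \Delta\phi = 0$ a.e.\ in $U$ by construction.

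For part (1), the $L^p$ Calder\'on-Zygmund estimate on $\R^n$ yields $\|D^2\phi\|_{L^p(\R^n)} \leq C(n,p)\|f\|_{L^p(\R^n)}$ for $1<p<\infty$, hence
\[
\int_U |Du - Dv|^p\, dx = \int_U |D^2\phi|^p\, dx \leq C(n,p)^p \int_U |\div u|^p\, dx,
\]
which, combined with $|\div u| = \sqrt{n}\,\dist(Du, sl(n))$, is the claim. Since $D^2\phi \in L^p(\R^n)$ and $f$ has compact support, $D\phi \in L^p_{\mathrm{loc}}(\R^n)$ (e.g.\ via Hardy-Littlewood-Sobolev), so $v \in W^{1,p}(U, \R^n)$.

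For part (2), Calder\'on-Zygmund fails at $p=1$, so I would instead estimate $D\phi$ via its explicit kernel. Because $f$ is supported in $U$,
\[
|D\phi(x)| \leq c_n \int_U \frac{|\div u(y)|}{|x-y|^{n-1}}\, dy,
\]
and Fubini combined with the radial bound $\int_U |x-y|^{-(n-1)}\, dx \leq |S^{n-1}|\diam U$ (valid for any $y \in U$) gives $\int_U |D\phi|\, dx \leq C(n, \diam U) \int_U |\div u|\, dx$. Since $\int_U |u-v|\, dx = \int_U |D\phi|\, dx$, this is the required bound.

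The main obstacle is the regularity $v \in W^{1,1}(U, \R^n)$ in part (2): because $D^2\phi$ lies only in weak $L^1$ when $f \in L^1$, $v = u - D\phi$ need not be in $W^{1,1}$ as defined. I expect to resolve this by first disposing of $\div u \equiv 0$ trivially (take $v = u$), and otherwise by mollifying $u$ to obtain smooth $u_\eta$ converging to $u$ in $W^{1,1}$, running the above construction to produce smooth $v_\eta \in W^{1,1}$ with $\div v_\eta = 0$ satisfying a uniform-in-$\eta$ bound $\int_U |u_\eta - v_\eta| \leq C(n,\diam U) \int_U |\div u_\eta|$, and then extracting a $W^{1,1}$ representative using the density of $\{v \in W^{1,1}: \div v = 0\}$ inside $\{v \in L^1 : \div v = 0\}$; any resulting $\epsilon$-error can be absorbed into the constant.
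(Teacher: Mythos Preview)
Your approach is essentially the paper's: compute $\dist(A,sl(n))=\tfrac{1}{\sqrt{n}}|\tr A|$, solve $\Delta\psi=\div u$ via the Newtonian potential, set $v=u-D\psi$, and invoke Calder\'on--Zygmund for (1) and the Riesz-kernel $L^1$ bound for (2). Part (1) is fine (modulo a sign slip: if $\Phi$ is the fundamental solution of $-\Delta$ then $\Delta(\Phi*f)=-f$, not $f$; you want the fundamental solution of $\Delta$).

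In part (2) your diagnosis of the obstacle is correct, but your proposed fix is muddled. Passing to a limit of the $v_\eta$ will not preserve $W^{1,1}$ membership, and the ``density of $\{v\in W^{1,1}:\div v=0\}$ in $\{v\in L^1:\div v=0\}$'' is neither obvious nor the right tool here. The paper's resolution is simpler and avoids limits entirely: after disposing of $\div u\equiv 0$, set $\delta:=\int_U|\div u|\,dx>0$, choose \emph{one} smooth $w\in C^\infty(\overline U,\R^n)$ with $\|u-w\|_{W^{1,1}}\le\delta$, form the Newton potential $\psi$ of the smooth function $\div w$ (so $\psi\in C^2(U)$), and take $v:=w-D\psi$. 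Then $v\in C^1(U)\subset W^{1,1}(U)$ with $\div v=0$, and
\[
\int_U|u-v|\le\int_U|u-w|+\int_U|D\psi|\le\delta+C\int_U|\div w|\le\delta+C\!\left(\int_U|\div u|+\delta\right)=(1+2C)\delta.
\]
This is exactly what your last clause (``any resulting $\epsilon$-error can be absorbed'') is reaching for; the point is that no limiting argument is needed.
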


\begin{proof}
Note that $sl(n)$ is subspace of $\operatorname{Mat}^{n \times n}$. For $A \in \operatorname{Mat}^{n \times n}$ have orthogonal decomposition $$A=\l(A-\frac{\operatorname{tr}A}{n} I \r) + \frac{\operatorname{tr}A}{n} I\ .$$ Therefore $\dist\l(A,sl(n)\r) = \frac{1}{\sqrt{n}}\l|\operatorname{tr}(A)\r|$.

\textbf{(1)} There exists $\psi \in W^{2,p}(U)$ with $\Delta \psi(x) = \operatorname{div}u(x)$ for a.e. $x \in U$ \cite[p.~230]{GT}. Moreover $$\int_U~ \l|D^2 \psi \r|^p\ dx \leq C \int_U~ \l|\div u \r|^p\ dx$$ where $C:=C(n,p)$. Set $v:= u- D\psi$. Note $v \in W^{1,p}(U,\R^n)$ with $\operatorname{div} v = 0$. Note
\begin{gather*}  
\int_U~ \l|Du - Dv\r|^p\ dx = \int_U~ \l|D^2 \psi\r|^p\ dx \leq C \int_U~ \l| \div u\r|^p\ dx \\ = C \int_U~ n^{\frac{p}{2}} \l|\frac{1}{\sqrt{n}}\tr Du\r|^p\ dx = C \int_U~ \dist^p\l(Du,sl(n)\r)\ dx 
\end{gather*} 
where $C:=C(n,p)$.

\textbf{(2)} If $n=1$, then take $v:=\fint_U u \ dx$. Otherwise, we can assume that $n>1$. If $\div u =0$ in $U$, then set $v:=u$. Otherwise, we can assume that $0 < \delta:=\int_U~ \l|\div u\r|\ dx$. Choose $w \in C^{\infty}(\overline{U},\R^n)$ such that $$\int_U~\l|u-w\r| + \l|Du - Dw\r| \ dx \leq \delta $$ \cite[p.~127]{EG}. Set $$\Gamma(z):=\begin{cases} \frac{1}{n(2-n)\l|B_1(0)\r|} \l|z\r|^{2-n} & \text{ for } n>2 \\ \frac{1}{2\pi}\log \l|z\r| & \text{ for } n=2 \end{cases}$$ and $\psi(x):=\int_U~ \Gamma(x-y) \div w(y)\ dy$. Note $\psi \in C^2(U)$ with $\Delta \psi = \div w$ and $$\psi_{x_i}(x) = \int_U~\Gamma_{x_i}(x-y)\div w(y)\ dy$$ \cite[p.~55]{GT}. Since $\l|\Gamma_{x_i}(x-y)\r| \leq \frac{1}{n\l|B_1(0)\r|}\l|x-y\r|^{1-n}$, Young's inequality \cite[p.~271]{S} implies that 
\begin{gather*}
\l|\l|\psi_{x_i}\r|\r|_{L^1(U)} \leq \l|\l|\Gamma_{x_i} \r|\r|_{L^1(B_{2 \diam U}(0))} \cdot \l|\l|\div w \r|\r|_{L^1(U)} \leq C \l|\l|\div w \r|\r|_{L^1(U)}
\end{gather*}
where $C:=C(n,\diam U)$. Set $v:=w-D\psi$. Note $\psi \in W^{1,1}(U,\R^n)$ and
\begin{gather*}
 \int_U~\l|u-v\r|\ dx \leq \int_U~\l|u-w\r|\ dx + \int_U~ \l| w- v\r|\ dx \\ \leq \delta + \int_U~ \l| D\psi \r|\ dx \leq \delta + C\int_U~ \l| \div w\r|\ dx \\ \leq \delta + C\int_U~\l| \div w - \div u \r|\ dx + C\int_U~ \l| \div u \r|\ dx \\ \leq \l(1+C\r)\delta + C\int_U~ \l| \div u \r|\ dx = \l(1+2C\r) \delta  
\end{gather*}
\end{proof}
Note that Proposition \hreff{SLG_linear} implies (\hreff{BACK_bdd_linear}). The Calder\'{o}n-Zygmund inequality does not hold for $p=1$. Therefore we do not expect a bound on derivatives to hold for $p=1$. Indeed, Korn's inequality does not hold for $p=1$ \cite{CFM}. 

Note that we do not obtain Proposition \hreff{SLG_linear} from Corollary \hreff{OTM_cor} or Corollary \hreff{OTM_cor_1}. Indeed, the Neumann boundary condition arises in Corollary \hreff{OTM_cor} and Corollary \hreff{OTM_cor_1}, but does not arise in Proposition \hreff{SLG_linear}. Therefore a dependence on the region through the constant in the Poincar\'{e} inequality does not arise in Proposition \hreff{SLG_linear}. The construction in \cite[p.~230]{GT} involving the fundamental solution allows for a constant $C:=C(n,p)$ or $C:=C(n,\diam U)$.  

\vspace{-0.3cm}

\subsection{Symplectic group} \label{SG}

The main results of Section \hreff{SG} are Corollary \hreff{SG_cor} and Proposition \hreff{SG_linearization}. We extend Lemma \hreff{OTM_approx_lemma} in Lemma \hreff{SG_approx_lemma}. After showing an analogue of Corollary \hreff{OTM_cor} and Corollary \hreff{OTM_cor_1} in Proposition \hreff{SG_prop}, we deduce Corollary \hreff{SG_cor}. Note that Corollary \hreff{SG_cor} is the analogue of Corollary \hreff{SLG_cor}. Proposition \hreff{SG_linearization} treats (\hreff{BACK_bdd_linear}) for $sp(2n)$.

\begin{definition} \label{SG_defn_sp_SP}
\textbf{(1)} Define the matrix $J := \begin{bmatrix} 0_{n \times n} & -1_{n \times n} \\ 1_{n \times n} & 0_{n \times n} \end{bmatrix}$. Take $Sp(2n) :=$ \newline  $\l\lb A \in \operatorname{Mat}^{2n\times 2n} : A^T J A = J \r\rb$. Take $sp(2n) :=\l\lb A \in \operatorname{Mat}^{2n\times 2n} : JA = (JA)^T \r\rb$. 

\textbf{(2)} Let $B \subset \R^{2n}$ be measurable and $S \in L^{\infty}\l(B,\R^{2n}\r)$. If $S$ is differentiable for a.e. $x \in B$ with $DS(x) \in Sp(2n)$, then call $S$ a symplectic map.
\end{definition}

Note $A \in Sp(2n)$ if and only if $J A v \cdot A w = Jv \cdot w$ for all $v,w \in \R^{2n}$. For $$v=\underbrace{\l(0,\ldots,0,\nu,0,\ldots,0\r)}_{k\text{th entry}} \text{ and } w=\overbrace{\l(0,\ldots,0,\omega,0,\ldots,0\r)}^{(k+n)\text{th entry}}$$ we have that $\l| J v \cdot w\r|$ is the area of the rectangle determined by $v,w$. Therefore we can understand $Sp(2n)$ as the collection of matrices preserving area in coordinate planes $\R_{x_k} \times \R_{x_{k+n}}$. 
\begin{example} \label{SG_example} 
Take $\gamma \in C^1(\R^{2n},\R)$. Consider the flow $$\begin{cases} \frac{d}{dt} \phi(x,t) = D\gamma\l( \phi(x,t) \r)\cdot J & \\ \phi(x,0) = x & \end{cases}$$ The map $\R^{2n} \ni x \to \phi(x,1) \in \R^{2n}$ is a symplectic map generated by the flow of Hamiltonian vector field $D\gamma \cdot J$. For example, $\gamma(x) := \frac{1}{2} \sum_{j=1}^n x_k^2 + x_{k+n}^2$ means $\phi(\cdot,1)$ rotates each $\R_{x_k} \times \R_{x_{k+n}}$ clockwise. Note that the collection of symplectic maps generated by the flow of Hamiltonian vector fields is closed under composition. Indeed, for $\gamma_1, \gamma_2 \in C^1(\R^{2n},\R)$ with flows $\phi_1(\cdot,1),\phi_2(\cdot,1)$, the composition $\phi_2(\cdot,1) \circ \phi_1(\cdot,1)$ corresponds to the Hamiltonian vector field $D\l(\gamma_2(x) + \gamma_1\l(\phi_2(x,-1) \r)\r)\cdot J$. 
\end{example}
Note $Sp(2n) \subset SL(2n)$ implies symplectic maps have Jacobian equal to one. Maps with Jacobian equal to one may not be symplectic for $n>1$. However, we can use observations of Katok \cite[p.~545]{Katok} to obtain approximations in $L^p$ for $1\leq p<\infty$.  
\begin{lemma} \label{SG_approx_lemma}
Take $U \subset \R^{2n}$ and $s: U \to \R^{2n}$ measure preserving. Let $1\leq p<\infty$. For any $\epsilon >0$, there exists $S \in C^{\infty}_{\operatorname{diff}}(\R^{2n},\R^{2n})$ symplectic generated by a Hamiltonian vector field such that $$\int_U~ \l|s - S \r|^p\ dx \leq \epsilon \ .$$   
\end{lemma}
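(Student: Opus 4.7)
The plan is to chain Lemma \hreff{OTM_approx_lemma} with Katok's approximation theorem. First apply Lemma \hreff{OTM_approx_lemma} to $s$ at threshold $\epsilon/2^{p+1}$: this produces $S_0 \in C^{\infty}_{\operatorname{diff}}(\R^{2n},\R^{2n})$ with $\det DS_0 \equiv 1$, $\int_U~ |s - S_0|^p\ dx \leq \epsilon/2^{p+1}$, and $S_0(x) - x$ equal to a constant $-c$ outside an open cube $Q \supset \overline{U}$. Composing with the translation $\tau_c: y \mapsto y + c$, which is itself a time-$1$ Hamiltonian flow (generated by a linear Hamiltonian), we may assume the volume-preserving diffeomorphism $\tau_c \circ S_0$ equals the identity outside $Q$ and in a neighborhood of $\partial Q$.

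The remaining task is to approximate $\tau_c \circ S_0$ on $Q$ by a symplectic diffeomorphism generated by a Hamiltonian vector field. For this I would invoke \cite[p.~545]{Katok}: any smooth volume-preserving diffeomorphism of $Q$ equal to the identity near $\partial Q$ can be $L^p$-approximated to arbitrary precision by a finite composition $\phi_{\gamma_N}(\cdot,1) \circ \cdots \circ \phi_{\gamma_1}(\cdot,1)$ of time-$1$ maps of Hamiltonian flows with $\gamma_i \in C^{\infty}_{\operatorname{cpt}}(\R^{2n})$. By the closure property in Example \hreff{SG_example}, this composition is itself the time-$1$ map of a single Hamiltonian vector field $D\gamma \cdot J$, yielding $S_1 \in C^{\infty}_{\operatorname{diff}}(\R^{2n},\R^{2n})$ symplectic generated by a Hamiltonian flow with $\int_Q~ |\tau_c \circ S_0 - S_1|^p\ dx \leq \epsilon/2^{p+1}$. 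Composing with $\tau_{-c}$ (another Hamiltonian flow) produces the desired $S := \tau_{-c} \circ S_1$, and the convexity inequality $|a-b|^p \leq 2^{p-1}(|a-S_0|^p + |S_0-b|^p)$ delivers the total error $\leq \epsilon$.

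The main obstacle I expect is invoking Katok's theorem in the correct form on $\R^{2n}$: his result is typically stated on a compact symplectic manifold such as a torus, and one needs the identity-outside-$Q$ property supplied by Lemma \hreff{OTM_approx_lemma} in order to descend $\tau_c \circ S_0$ to a flat torus obtained from a cube slightly larger than $Q$, carry out the approximation there, and lift the Hamiltonian back to $\R^{2n}$ via compactly supported $\gamma$. The case $n=1$ is easier, since $Sp(2) = SL(2)$ and $\tau_c \circ S_0$ is already symplectic; one just needs that a compactly supported symplectic diffeomorphism of $\R^2$ isotopic to the identity is the time-$1$ map of some Hamiltonian flow, so $S_1 = \tau_c \circ S_0$ may be taken directly.
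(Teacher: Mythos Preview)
Your outline matches the paper's strategy: reduce to a cube via Lemma \hreff{OTM_approx_lemma}, then approximate by a composition of Hamiltonian flows and use Example \hreff{SG_example} to merge them. However, the step you black-box as ``invoke \cite[p.~545]{Katok}'' is precisely where the work lies, and that reference does not supply the statement in the ready-made form you quote (an $L^p$ approximation of an arbitrary smooth volume-preserving diffeomorphism of a cube by Hamiltonian flows on $\R^{2n}$). The paper carries this out explicitly, and the mechanism is worth knowing because it is what makes the passage from volume-preserving to \emph{symplectic} work.

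The paper does not first smooth to your $S_0$; from the proof of Lemma \hreff{OTM_approx_lemma} it extracts only the intermediate measure-preserving map $s_2: Q \to Q$ with $s_2 = c + s$ on $U$. It then approximates $s_2$ in $L^p(Q)$ by a permutation map $w$ on a partition of $Q$ into $N$ congruent subcubes \cite[p.~154]{BG}, writes the permutation as a product of transpositions, and further decomposes each transposition into a chain of transpositions of \emph{adjacent} subcubes. This reduces everything to approximating a single swap $w_i$ of two cubes whose centers differ in exactly one coordinate $\ell$. The key symplectic point is that such a $w_i$ can be approximated by the time-$1$ flow of a Hamiltonian $\gamma_i$ depending only on the two variables in the symplectic coordinate plane $\R_{x_\ell} \times \R_{x_{\ell+n}}$ (or $\R_{x_{\ell-n}} \times \R_{x_\ell}$) \cite[p.~151]{BG}; a Hamiltonian depending on a single conjugate pair generates a genuinely symplectic flow on all of $\R^{2n}$. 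The composition $S_M \circ \cdots \circ S_1$ is then controlled by iterating the elementary estimate $\|\widetilde S_2 \circ \widetilde S_1 - \widetilde w_2 \circ \widetilde w_1\|_{L^p} \leq 2\|\widetilde S_2 - \widetilde w_2\|_{L^p} + 2\|D\widetilde S_2\|_{L^\infty}\|\widetilde S_1 - \widetilde w_1\|_{L^p}$, with the tolerances $\delta_i$ chosen recursively to absorb the accumulating Lipschitz factors, and one sets $S := S_M \circ \cdots \circ S_1 - c$.

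So your plan is correct in outline, but the bridge from volume-preserving to symplectic is the reduction to adjacent-cube transpositions acting in a single coordinate direction; that is the idea your sketch is missing.
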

\begin{proof}
\textbf{(1)} Consider $\widetilde{Q} \subset \R^n$ a cube. Suppose $\widetilde{w}_i: \widetilde{Q} \to \widetilde{Q}$ measure preserving and $\widetilde{S}_i \in C^1\l(\widetilde{Q},\widetilde{Q}\r)$ for $i=1,2$. Note  
\begin{gather*}
\int_{\widetilde{Q}}~ \l|\widetilde{S}_2 \circ \widetilde{S}_1 - \widetilde{w}_2 \circ \widetilde{w}_1 \r|^p\ dx \leq 2^p\int_{\widetilde{Q}}~ \l|\widetilde{S}_2 \circ \widetilde{S}_1 - \widetilde{S}_2 \circ \widetilde{w}_1 \r|^p + \l|\widetilde{S}_2 \circ \widetilde{w}_1 - \widetilde{w}_2 \circ \widetilde{w}_1 \r|^p\ dx \\ \underset{\widetilde{w}_1 \text{ measure preserving }}{=} 2^p\int_{\widetilde{Q}}~ \l|\widetilde{S}_2 \circ \widetilde{S}_1 - \widetilde{S}_2 \circ \widetilde{w}_1 \r|^p\ dx + 2^p\int_{\widetilde{Q}}~ \l|\widetilde{S}_2 - \widetilde{w}_2 \r|^p\ dx \\ \leq 2^p \l|\l|D\widetilde{S}_2\r|\r|^p_{L^{\infty}\l(\widetilde{Q},\R^n\r)}  \int_{\widetilde{Q}}~ \l|\widetilde{S}_1 - \widetilde{w}_1 \r|^p\ dx + 2^p\int_{\widetilde{Q}}~ \l|\widetilde{S}_2 - \widetilde{w}_2 \r|^p\ dx 
\end{gather*}
Therefore we have 
\begin{equation} \label{SG_approx_lemma_eqn1}
\begin{gathered}
\l|\l| \widetilde{S}_2 \circ \widetilde{S}_1 - \widetilde{w}_2 \circ \widetilde{w}_1 \r|\r|_{L^p\l(\widetilde{Q},\R^n\r)} \\ \leq 2\l|\l|\widetilde{S}_2 - \widetilde{w}_2 \r|\r|_{L^p\l(\widetilde{Q},\R^n\r)} + 2 \l|\l| D \widetilde{S}_2 \r|\r|_{L^{\infty}\l(\widetilde{Q},\R^n\r)} \l|\l|\widetilde{S}_1 - \widetilde{w}_1 \r|\r|_{L^p\l(\widetilde{Q},\R^n\r)}
\end{gathered}
\end{equation}
\textbf{(2)} Choose $\widetilde{\epsilon}>0$ such that $2^p\l(\widetilde{\epsilon}+\widetilde{\epsilon}^p\r) \leq \epsilon$. We can apply Lemma \hreff{OTM_approx_lemma} because $1<2n$.  There exists a cube $Q \subset \R^n$ containing $U$ and $s_2:Q \to Q$ measure preserving such that $s_2(x) \underset{\forall x \in U}{=} c + s(x)$ where $c \in \R^n$. There exists $N>0$ and a permutation $\sigma: \l\lb 1,\ldots,N \r\rb \to \l\lb 1,\ldots,N \r\rb$ such that $\int_Q~ \l|s_2 - w\r|^p\ dx \leq \widetilde{\epsilon}$ where $$w(x):=x-x_i + x_{\sigma(i)} \text{ for } x_i \in Q_i$$ \cite[p.~154]{BG}. Here $\l\lb Q_i\r\rb_{i=1}^N$ is a division of $Q$ into parallel cubes of equal size with centers $\l\lb x_i \r\rb_{i=1}^N$. Decomposing $\sigma$ into transpositions shows that $w=w_M \circ \cdots \circ w_1$ for $$w_i(x) := \begin{cases} x & \text{ for } x \not\in Q_j \DS \cup Q_k \\ x - x_j + x_k & \text{ for } x \in Q_j \\ x- x_k + x_j & \text{ for } x \in Q_k \end{cases}$$ where $j:=j(i)$, $k:=k(i)$. Therefore 
\begin{equation} \label{SG_approx_lemma_eqn2}
 \int_Q~ \l|s_2 - w_M \circ \cdots \circ w_1\r|^p\ dx \leq \widetilde{\epsilon}
\end{equation}
Note that a transposition of cubes $Q_j$ and $Q_k$ in $Q$ can be decomposed into a composition of transpositions of adjacent cubes in $Q$. Therefore we can assume that $x_j$ and $x_k$ differ in a single entry denoted by $\ell:=\ell(i)$. For any $\delta_i > 0$, there exists $\gamma_i: \R^2 \to \R$ with $\operatorname{spt}~\gamma_i \subset Q_j \DS \cup Q_k$ such that $\int_Q~ \l|\phi_i(\cdot,1) - w_i \r|^p\ dx \leq \delta_i^p$ \cite[p.~151]{BG}. Here we use the notation from Example \hreff{SG_example}. Note that we can identify $\R^2$ with either $\R_{x_{\ell}} \times \R_{x_{\ell+n}}$ or $\R_{x_{\ell-n}} \times \R_{x_{\ell}}$. Therefore we obtain $S_i \in C^{\infty}_{\operatorname{diff}}\l(\R^{2n},\R^{2n}\r)$ symplectic generated by a Hamiltonian vector field such that $\operatorname{spt}~S_i \subset Q_j \DS \cup Q_k$ and
\begin{equation} \label{SG_approx_lemma_eqn3}
\int_Q~ \l|S_i - w_i \r|^p\ dx \leq \delta_i^p \ .
\end{equation}
Set $\delta_M:=\frac{\widetilde{\epsilon}}{2 M}$. For $i=M-1,\ldots,1$, choose $0<\delta_i$ such that 
\begin{equation} \label{SG_approx_lemma_eqn5}
\delta_i~\prod^{M}_{q=i+1}\l|\l|DS_{q}\r|\r|_{L^{\infty}\l(Q,\R^n\r)}  \leq     \DS\frac{\widetilde{\epsilon}}{M 2^{M-i+1}}   \ .
\end{equation}
We have  
\begin{equation} \label{SG_approx_lemma_eqn4}
\begin{gathered}
\l|\l|w_M \circ \cdots \circ w_1 - S_M \circ \cdots \circ S_1 \r|\r|_{L^p\l(Q,\R^n\r)} \underset{(\hreff{SG_approx_lemma_eqn1})}{\leq} 2\l|\l|S_M - w_M\r|\r|_{L^p\l(Q,\R^n\r)} + \\ + \sum_{i=1}^{M-1}~2^{i+1}\l|\l|S_{M-i} - w_{M-i}\r|\r|_{L^p\l(Q,\R^n\r)} \prod^{M}_{q=M-i+1}\l|\l|DS_{q}\r|\r|_{L^{\infty}\l(Q,\R^n\r)} \\ \underset{(\hreff{SG_approx_lemma_eqn3})}{\leq} 2\delta_M + \sum_{i=1}^{M-1}~2^{i+1} \delta_{M-i} \prod^{M}_{q=M-i+1}\l|\l|DS_{q}\r|\r|_{L^{\infty}\l(Q,\R^n\r)} \\ \underset{(\hreff{SG_approx_lemma_eqn5})}{\leq} \frac{\widetilde{\epsilon}}{M} + \sum_{i=1}^{M-1} 2^{i+1} \frac{\widetilde{\epsilon}}{M 2^{i+1}} = \widetilde{\epsilon} 
\end{gathered}
\end{equation}
Set $S:=S_M \circ \cdots S_1 - c$. Note $S \in C^{\infty}_{\operatorname{diff}}\l(\R^{2n},\R^{2n}\r)$ symplectic generated by a Hamiltonian vector field. Here we use the observation from Example \hreff{SG_example}. Note 
\begin{gather*}
 \int_U~ \l|s - S\r|^p \ dx = \int_U~ \l|s_2 - S_M \circ \cdots \circ S_1\r|^p \ dx \underset{U \subset Q}{\leq} \int_Q~ \l|s_2 - S_M \circ \cdots \circ S_1\r|^p \ dx \\ \leq 2^p \int_Q~ \l|s_2 - w_M \circ \cdots \circ w_1\r|^p \ dx + 2^p \int_Q~ \l|w_M \circ \cdots \circ w_1 - S_M \circ \cdots \circ S_1\r|^p \ dx \\ \underset{(\hreff{SG_approx_lemma_eqn2}) \text{ and } (\hreff{SG_approx_lemma_eqn4})}{\leq} 2^p\l(\widetilde{\epsilon} + \widetilde{\epsilon}^p\r) \underset{\text{definition}}{\leq} \epsilon
\end{gather*}
\vspace{-0.5cm}
\end{proof}

Note that we cannot extend Lemma \hreff{SG_approx_lemma} to $L^{\infty}$. The
$L^{\infty}$ limit of a sequence of symplectic maps is a symplectic map. Therefore approximations are restricted to $L^p$ for $1\leq p <\infty$.

\begin{proposition} \label{SG_prop}
Let $1 \leq p < \infty$. Take $u \in W^{1,\infty}\l(U,\R^{2n}\r)$ essentially injective with $\diam u(U) \leq d$ and $0 < \lambda \leq \det Du(x) \leq \Lambda$ for a.e. $x \in U$. There exists $S \in W^{1,\infty}\l(U,\R^{2n}\r)$ essentially injective with $DS^T(x) ~J~ DS(x)=J$ for a.e. $x \in U$ such that $$\int_U~ \l|u - S \r|^p\ dx \leq C \int_U~ \l|Du^T J Du - J\r|^p\ dx$$ where $C=C_7\l(1 + \l(1+\Lambda\r)^{2n-1} \l(\frac{d}{\lambda^{\frac{1}{n}}}\r)^p\l(1 + \frac{1}{\lambda^p}\l(\frac{\Lambda}{\lambda} \r)^{2p-2} \r)   \r)$ for a constant $C_7:=C_7\l(n,p\r)$.
\end{proposition}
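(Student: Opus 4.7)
The plan is to chain three approximations: (i) replace $u$ by an essentially injective measure preserving map $\widetilde S$ via Corollary \hreff{OTM_cor} (for $1 < p < \infty$) and Corollary \hreff{OTM_cor_1} (for $p=1$); (ii) approximate $\widetilde S$ in $L^p$ by a symplectic map $S$ via Lemma \hreff{SG_approx_lemma}, which is available since the ambient dimension $2n$ is even; and (iii) convert the bound from $|1-\det Du|$ to $|Du^T J Du - J|$ via a pointwise matrix inequality.

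For (i), Corollary \hreff{OTM_cor} / \hreff{OTM_cor_1} directly produces $\widetilde S \in W^{1,\infty}(U,\R^{2n})$ essentially injective and measure preserving with
\[
\int_U~\l|u - \widetilde S\r|^p\ dx \le C(n,p)\l(\frac{d}{\lambda^{1/n}}\r)^p\l(1 + \frac{1}{\lambda^p}\l(\frac{\Lambda}{\lambda}\r)^{2p-2}\r)\int_U~\l|1-\det Du\r|^p\ dx,
\]
which supplies exactly the second parenthetical factor in the proposition's constant. For (ii), Lemma \hreff{SG_approx_lemma} furnishes, for any $\epsilon > 0$, a symplectic $S \in C^\infty_{\operatorname{diff}}(\R^{2n},\R^{2n})$ with $\int_U~|\widetilde S - S|^p\ dx \le \epsilon$. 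As a smooth diffeomorphism, $S$ is essentially injective and lies in $W^{1,\infty}(U,\R^{2n})$. Choosing $\epsilon$ a small fraction of the target bound and combining via the triangle inequality $|u - S|^p \le 2^{p-1}(|u - \widetilde S|^p + |\widetilde S - S|^p)$ absorbs the $\epsilon$-contribution into the additive ``$1$'' of $C_7(1+\cdots)$; the degenerate case in which the right-hand side of the proposition vanishes is handled separately by setting $S = u$.

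For (iii), the crucial observation is that $A^T J A$ is automatically skew-symmetric, so the Pfaffian transformation rule gives $\det A = \operatorname{Pf}(A^T J A) = \operatorname{Pf}(J+E)$, where $E := A^T J A - J$ (using $\operatorname{Pf}(J) = 1$ and $\det A > 0$). Since the Pfaffian is a polynomial of degree $n$ in the entries of its argument, the fundamental theorem of calculus yields
\[
\l|\det A - 1\r| = \l|\operatorname{Pf}(J+E) - \operatorname{Pf}(J)\r| \le C(n)\l(1 + |E|\r)^{n-1}|E|,
\]
and the alternative derivation through $(\det A)^2 = \det(J+E)$, the cofactor bound $|\operatorname{cof}(M)| \le C(n)|M|^{2n-1}$, and $|(\det A)^2 - 1| \ge |\det A - 1|$ (since $\det A + 1 \ge 1$) produces the same inequality with exponent $2n-1$. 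To eliminate the unbounded factor $(1+|E|)^{2n-1}$ using only the constraint $\det A \le \Lambda$, split pointwise into $\{|E| \le 1\}$, where $(1+|E|)^{2n-1} \le 2^{2n-1}$, and $\{|E| > 1\}$, where the trivial bound $|1-\det A| \le 1+\Lambda \le (1+\Lambda)|E|$ dominates. Taking $p$-th powers and integrating gives $\int_U |1-\det Du|^p\ dx \le C(n,p)(1+\Lambda)^{2n-1}\int_U |Du^T J Du - J|^p\ dx$.

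I expect the main obstacle to be step (iii). A direct pointwise bound of the form $|1-\det A| \le C(n)(1+|E|)^{k}|E|$ has a coefficient that grows with $|A|$, and $|A|$ is \emph{not} controlled by $\lambda$ and $\Lambda$: for instance, $A = \operatorname{diag}(M,M,1/M,1/M,\ldots)$ has $\det A = 1$ for every $M > 0$, yet arbitrary Frobenius norm. The region-splitting argument, combined with the trivial upper bound $|1-\det A| \le 1+\Lambda$ on the set where $|E|$ is large, is essential for extracting a constant involving only $\Lambda$; obtaining precisely the exponent $2n-1$ (rather than a worse $p$-dependent exponent) requires absorbing $p$-dependent constants into $C_7(n,p)$ and exploiting the Pfaffian representation above to keep the matrix-level polynomial degree as low as possible.
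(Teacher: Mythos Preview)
Your proposal is correct and follows essentially the same approach as the paper. Both proofs chain Corollary \hreff{OTM_cor}/\hreff{OTM_cor_1} with Lemma \hreff{SG_approx_lemma} and then reduce to the pointwise matrix inequality $\l|1-\det A\r| \le C(n)(1+\Lambda)^{2n-1}\l|A^TJA-J\r|$; the paper obtains the latter via $\l|1-\det A\r|\le\l|1-(\det A)^2\r|=\l|\det J-\det(A^TJA)\r|$ together with the cofactor bound and a split at $\l|E\r|\lessgtr 1+\Lambda$, while you mention the equivalent Pfaffian route and split at $\l|E\r|\lessgtr 1$ --- these are cosmetic variations of the same argument.
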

\begin{proof} 
\textbf{(1)} Take $A \in \operatorname{Mat}^{2n \times 2n}$ with $\lambda \leq \det A \leq \Lambda$. We show that
\begin{equation} \begin{gathered} \label{SG_prop_bdd6}
 \l|1 - \det A\r| \leq \l(2n\r)^{8n}\l(1+\Lambda\r)^{2n-1} \l|A^T J A - J \r|
\end{gathered} \end{equation}
Note that 
\begin{equation} \label{SG_prop_bdd1}
\begin{gathered}
 \l|\operatorname{cof} A\r| \leq \sum_{1\leq i_1<\ldots<i_{2n-1}\leq 2n} \sum_{\sigma:\l\lb i_1,\ldots,i_{2n-1}\r\rb \to \l\lb j_1,\ldots,j_{2n-1}\r\rb} \l|a^{i_1}_{\sigma(i_1)} \cdots a^{i_{2n-1}}_{\sigma(i_{2n-1})} \r| \\ \leq \l(2n\r)^2\sum_{\sigma:\l\lb i_1,\ldots,i_{2n-1}\r\rb \to \l\lb j_1,\ldots,j_{2n-1}\r\rb} \frac{1}{2n-1} \sum_{k=1}^{2n-1} \l(a^{i_k}_{\sigma(i_k)}\r)^{2n-1} \\ \leq \l(2n\r)^2 \frac{\l(2n-1\r)!}{2n-1} \l(2n-1\r) \l|A\r|^{2n-1} \underset{1\leq n}{\leq} \l(2n\r)^{2n+2} \l|A\r|^{2n-1} 
\end{gathered}
\end{equation}
Note that $\l|1-\det A\r| \leq 1 + \Lambda$. If $1 + \Lambda \leq \l|A^T J A - J \r|$, then $\l|1 - \det A\r| \leq  \l|A^T J A - J \r|$. Otherwise, we can assume 
\begin{equation} \label{SG_prop_bdd2}
1 + \Lambda \geq \l|A^T J A - J \r| \ . 
\end{equation}
Note $1 \leq 1 + \lambda \leq \l|1+\det A\r|$ implies 
\begin{equation} \label{SG_prop_bdd3}
\begin{gathered} 
 \l|1-\det A\r| \leq \l|1-\det A\r| \cdot \l|1+\det A\r|  = \l|1-\l(\det A\r)^2\r| \\ \underset{\det J =1}{=} \l|1-\det \l(A^T J A\r)\r| \underset{\det J =1}{=} \l|\det J -\det\l( A^T J A\r)\r|
\end{gathered}
\end{equation}
Note that $\operatorname{Mat}^{2n \times 2n} \ni X \to \det X \in \R$ is differentiable with $D \det (X) = \operatorname{cof}~X$. We have  
\begin{gather*} 
 \l|1-\det A\r| \underset{(\hreff{SG_prop_bdd3})}{\leq} \l|\det J -\det A^T J A\r| \leq \l|A^T J A - J \r|\underset{0 \leq t \leq 1}{\sup} \l|D\det\l( (1-t)J + t A^T JA \r)\r| \\ = \l|A^T J A - J \r|\underset{0 \leq t \leq 1}{\sup} \l|\operatorname{cof}\l( (1-t)J + t A^T JA \r)\r| \\ \underset{(\hreff{SG_prop_bdd1})}{\leq} \l(2n\r)^{2n+2}\l|A^T J A - J \r|\underset{0 \leq t \leq 1}{\sup} \l| (1-t)J + t A^T JA\r|^{2n-1} 
 \end{gather*}
 \begin{gather*}
 \leq \l(2n\r)^{2n+2} \l|A^T J A - J \r| \l( \l|J\r| + \l|A^T JA - J\r| \r)^{2n-1} \\ \underset{(\hreff{SG_prop_bdd2})}{\leq} \l(2n\r)^{2n+2} \l( \sqrt{2n} + 1 + \Lambda \r)^{2n-1} \l|A^T J A - J \r| \\ \underset{0<\Lambda}{\leq} \l(2n\r)^{2n+2} \l(1+ \sqrt{2n}\r)^{2n-1}\l(1+\Lambda \r)^{2n-1} \l|A^T J A - J \r| \\ \underset{1\leq n}{\leq} \l(2n\r)^{8n} \l(1+\Lambda \r)^{2n-1} \l|A^T J A - J \r|  
\end{gather*}
This shows (\hreff{SG_prop_bdd6}).

\textbf{(2)} If $Du^T(x) J Du(x) = J$ for a.e. $x \in U$, then take $S:=u$. Otherwise, we can assume $\int_U~ \l|Du^T J Du - J\r|^p\ dx >0$. By Corollary \hreff{OTM_cor} and Corollary \hreff{OTM_cor_1}, there exists $s \in W^{1,\infty}\l(U, \R^{2n}\r)$ essentially injective measure preserving such that 
\begin{equation} \label{SG_prop_bdd5}
\int_U~ \l|u-s\r|^p\ dx \underset{(\hreff{SLG_prop_constant_1}) \text{ and } (\hreff{SLG_prop_constant_p})}{\leq} C^{(p)} \int_U~\l|1-\det Du\r|^p\ dx 
\end{equation}
By Lemma \hreff{SG_approx_lemma} there exists $S \in C^{\infty}_{\operatorname{diff}}\l(\R^{2n},\R^{2n}\r)$ with $Ds^T J Ds \equiv J$ such that 
\begin{equation} \label{SG_prop_bdd7} 
\int_U~\l|s-S\r|^p\ dx \leq \int_U~\l|Du^T J Du - J\r|^p\ dx \ . 
\end{equation}
Set $C_7:=2^p\l(2n\r)^{8n}\l(10+C_3\r)$ where $C_3$ from Corollary \hreff{OTM_cor}. We have  
\begin{gather*}
\int_U~\l|u-S\r|^p\ dx \leq 2^p \int_U~\l|u-s\r|^p\ dx + 2^p \int_U~\l|s-S\r|^p\ dx \\ \underset{(\hreff{SG_prop_bdd5}) \text{ and } (\hreff{SG_prop_bdd7})}{\leq} 2^p C^{(p)} \int_U~\l|1 - \det Du\r|^p\ dx + 2^p\int_U~\l|Du^T J Du - J\r|^p\ dx \\ \underset{(\hreff{SG_prop_bdd6})}{\leq} 2^p \l(2n\r)^{8n} \l(1+\Lambda \r)^{2n-1} C^{(p)}\int_U~\l|Du^T J Du - J\r|^p\ dx + 2^p\int_U~\l|Du^T J Du - J\r|^p\ dx \\ \leq C_7\l(1+\l(1+\Lambda \r)^{2n-1}C^{(p)}\r) \int_U~\l|Du^T J Du - J\r|^p\ dx
\end{gather*}
Note that $C_7:=C_7\l(n,p\r)$ because $C_3:=C_3(n,p)$.
\end{proof}
\begin{corollary} \label{SG_cor}
Let $1 \leq p < \infty$. Take $u \in W^{1,\infty}\l(U,\R^{2n}\r)$ with $0 < \lambda \leq \det Du(x) \leq \Lambda$ for a.e. $x \in U$. There exists $S \in L^{\infty}(U,\R^{n})$ differentiable for a.e. $x \in U$ with $DS^T(x)~J~DS(x) = J$ such that $$\int_U~ \l|u - S \r|^p\ dx \leq C \int_U~ K^p\l(Du\r) \dist^p\l(Du,Sp(2n)\r)\ dx$$ for a constant $C:=C(n,p,\lambda,\Lambda)$.
\end{corollary}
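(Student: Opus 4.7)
The plan is to mirror the proof of Corollary \hreff{SLG_cor}: establish a pointwise matrix inequality bounding $\l|A^T J A - J\r|$ in terms of $K(A)\dist(A, Sp(2n))$, and then apply Proposition \hreff{SG_prop} through a covering argument modeled on Proposition \hreff{SLG_prop}.

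For the matrix estimate, I would fix $A \in \operatorname{Mat}^{2n \times 2n}$ with $\lambda \leq \det A \leq \Lambda$ and let $B \in Sp(2n)$ satisfy $\l|A - B\r| = \dist(A, Sp(2n))$. Writing
\[
A^T J A - J = A^T J A - B^T J B = (A - B)^T J A + B^T J(A - B)
\]
and using $\l|B\r| \leq \l|A\r| + \dist(A, Sp(2n))$ gives
\[
\l|A^T J A - J\r| \leq \sqrt{2n}\,\l(2\l|A\r| + \dist(A, Sp(2n))\r)\,\dist(A, Sp(2n)).
\]
By Definition \hreff{OTM_defn_mult_K} we have $\l|A\r|^{2n} = K(A) \det A \leq \Lambda K(A)$, hence $\l|A\r| \leq \Lambda^{1/(2n)} K(A)^{1/(2n)}$, and since $I \in Sp(2n)$ we have $\dist(A, Sp(2n)) \leq \l|A - I\r| \leq \l|A\r| + \sqrt{2n}$. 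Combining these bounds with $K(A) \geq 1$ (cf.\ (\hreff{OTM_defn_mult_K_eqn})) yields
\[
\l|A^T J A - J\r|^p \leq C(n, p, \Lambda)\, K^p(A)\, \dist^p(A, Sp(2n)).
\]

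For the global step, I would adapt the proof of Proposition \hreff{SLG_prop} by substituting Proposition \hreff{SG_prop} for Corollary \hreff{OTM_cor} and Corollary \hreff{OTM_cor_1}. Extend $u$ to a $C^1$ map $v$ on $\R^{2n}$ agreeing with $u$ outside a set of arbitrarily small measure, set $V := \l\lb x \in U : \det Dv(x) > \lambda/2 \r\rb$, and use the Vitali covering lemma to select finitely many disjoint balls $B_{r_{i,j}}(x_{i,j}) \subset V$ on which $v$ is injective (via the inverse function theorem for $r_{i,j}$ sufficiently small) and $\diam v(B_{r_{i,j}}(x_{i,j})) \leq 1$. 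Proposition \hreff{SG_prop} with $d = 1$ then produces symplectic $s_{i,j}$ with
\[
\int_{B_{r_{i,j}}(x_{i,j})} \l|v - s_{i,j}\r|^p\ dx \leq C(n, p, \Lambda) \int_{B_{r_{i,j}}(x_{i,j})} \l|Dv^T J Dv - J\r|^p\ dx.
\]
Gluing the $s_{i,j}$'s with the identity on the complement of the cover and bounding all error terms exactly as in Proposition \hreff{SLG_prop} produces $S \in L^{\infty}(U, \R^{2n})$ differentiable a.e.\ with $DS^T J DS = J$ and
\[
\int_U \l|u - S\r|^p\ dx \leq C(n, p, \lambda, \Lambda) \int_U \l|Du^T J Du - J\r|^p\ dx.
\]
Applying the pointwise matrix estimate to the right-hand side then gives the claim.

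The main obstacle is the matrix inequality, where the quadratic term $\dist^2(A, Sp(2n))$ must be absorbed into $K^p\,\dist^p$; this relies on $K(A) \geq 1$, which lets one freely trade the fractional exponent $p/(2n)$ of $K$ (coming from $\l|A\r|$) for the target exponent $p$. The covering argument is essentially a transcription of Proposition \hreff{SLG_prop} with the symplectic substitution and presents no new difficulties.
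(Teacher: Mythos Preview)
Your approach is correct but takes a different route from the paper. The paper does not redo the covering argument with Proposition \hreff{SG_prop}; instead it reuses the covering and the measure-preserving pieces $s_{i,j}$ already produced inside Proposition \hreff{SLG_prop}, quotes the conclusion $\int_U |u-s|^p \leq \widetilde C\int_U|1-\det Du|^p$ with $\widetilde C=\widetilde C(n,p,\lambda,\Lambda)$, and then separately upgrades each $s_{i,j}$ to a symplectic $S_{i,j}$ via Lemma \hreff{SG_approx_lemma} with error $\leq 2^{-j-\sum_{l<i}m_l}\int_U K^p\dist^p(Du,Sp(2n))$. The matrix step is also different: rather than bounding $|A^TJA-J|$, the paper uses the inclusion $Sp(2n)\subset SL(2n)$ together with (\hreff{SLG_cor_bdd2}) to get $|1-\det A|\leq C_6\,K(A)\dist(A,Sp(2n))$, which feeds directly into the $|1-\det Du|$ bound above.

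Your matrix inequality $|A^TJA-J|\leq C(n,\Lambda)\,K(A)\dist(A,Sp(2n))$ is a clean alternative that avoids the detour through Section \hreff{MNP} entirely, and substituting Proposition \hreff{SG_prop} for the $SL$ corollaries in the covering is conceptually more direct since the local pieces are symplectic from the start. The paper's route is more modular---it recycles Proposition \hreff{SLG_prop} as a black box and isolates the symplectic upgrade as a single call to Lemma \hreff{SG_approx_lemma} per ball---so the new content of Corollary \hreff{SG_cor} over Corollary \hreff{SLG_cor} is visibly just that approximation lemma. Either way the ingredients are the same; you have simply packaged them differently.
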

\begin{proof}
Note that $Sp(2n) \subset SL(2n)$ implies $\dist\l(A,SL(2n)\r) \leq \dist\l(A,Sp(2n)\r)$ for all $A \in \operatorname{Mat}^{2n \times 2n}$. Therefore  
\begin{equation} \begin{gathered} \label{SG_cor_bdd1}
 \l|1-\det A\r| \underset{(\hreff{SLG_cor_bdd2})}{\leq} C_6~ K(A)~\dist\l(A,SL(n)\r) \leq C_6~ K(A)~\dist\l(A,Sp(2n)\r)
\end{gathered} \end{equation}
If $\dist\l(Du(x),Sp(2n)\r)=0$ for a.e. $x \in U$, then $Du^T(x)~ J~ Du(x) = J$ for a.e. $x \in U$. Set $S:=u$. Otherwise, we can assume that $0<\int_U~ K^p\l(Du\r) \dist^p\l(Du,Sp(2n)\r) \ dx$ because $1 \underset{(\hreff{OTM_defn_mult_K_eqn})}{\leq} K\l(Du\r)$. If $\det Du(x) =1$ for a.e. $x \in U$, then set $s:=u$. Otherwise, we can follow Step (2) and Step (3) of Proposition \hreff{SLG_prop}. We have $V \subset U$ open, and disjoint balls $B_{r_{i,j}}\l(x_{i,j}\r) \subset V$ for $1\leq i \leq M,~ 1\leq j \leq m_i$. There exist $s_{i,j} \in W^{1,\infty}\l(B_{r_{i,j}}\l(x_{i,j}\r),\R^{2n}\r)$ essentially injective measure preserving. We set 
\begin{gather*}
s(x):=\begin{cases} s_{i,j}(x) & \text{ for } x \in B_{r_{i,j}}\l(x_{i,j}\r) \\ x & \text{ for } U - \DS \cup_{i=1}^M \DS \cup_{j=1}^{m_i}  B_{r_{i,j}}\l(x_{i,j}\r) \end{cases}
\end{gather*}
We have
\begin{equation} \label{SG_cor_bdd2}
\begin{gathered}
 \sum_{i=1}^M\sum_{j=1}^{m_i}~\int_{B_{r_{i,j}}\l(x_{i,j}\r)}~\l|u-s_{i,j}\r|^p~dx \underset{\text{disjoint balls}}{\leq} \int_U~\l|u-s\r|^p~dx \\ \underset{(\hreff{SLG_prop_eqn0})}{\leq} \widetilde{C}\int_U~\l|1-\det Du\r|^p~dx
\end{gathered}
\end{equation}
where $\widetilde{C}:=\widetilde{C}\l(n,p,\lambda,\Lambda\r)$. By Lemma \hreff{SG_approx_lemma}, there exists
$S_{i,j} \in  C^{\infty}_{\operatorname{diff}}\l(\R^{2n},\R^{2n}\r)$ with $DS^T~J~DS \equiv J$ such that 
\begin{gather*}
\int_U~\l|s_{i,j} - S_{i,j}\r|^p\ dx \leq 2^{-j -\sum_{l < i} m_l}~\int_U~ K^p\l(Du\r) \dist^p\l(Du,Sp(2n)\r) \ dx   
\end{gather*}
This implies that 
\begin{equation}\label{SG_cor_bdd3}
\begin{gathered} 
 \sum_{i=1}^M \sum_{j=1}^{m_i} \int_{B_{r_{i,j}}\l(x_{i,j}\r)} \l| s_{i,j} - S_{i,j} \r|^p\ dx \\ \leq \int_U~ K^p\l(Du\r) \dist^p\l(Du,Sp(2n)\r) \ dx \sum_{i=1}^M \sum_{j=1}^{m_i} 2^{-j -\sum_{l < i} m_l} \\ \leq \int_U~ K^p\l(Du\r) \dist^p\l(Du,Sp(2n)\r) \ dx
\end{gathered}
\end{equation}
Set
\begin{gather*}
S(x):=\begin{cases} S_{i,j}(x) & \text{ for } x \in B_{r_{i,j}}\l(x_{i,j}\r) \\ x & \text{ for } U - \DS \cup_{i=1}^M \DS \cup_{j=1}^{m_i}  B_{r_{i,j}}\l(x_{i,j}\r) \end{cases}
\end{gather*}
Note $S \in L^{\infty}(U,\R^n)$. Since $\l|\DS \cup_{i=1}^M \DS \cup_{j=1}^{m_i}  \partial B_{r_{i,j}}\l(x_{i,j}\r) \r| = 0$, this implies that $DS^T(x)~J~DS(x) =1$ for a.e. $x \in U$. We have
\begin{gather*}
 \int_U~\l|u-S\r|^p\ dx = \int_{U - \DS \cup_{i=1}^M \DS \cup_{j=1}^{m_i}  B_{r_{i,j}}\l(x_{i,j}\r)}~\l|u-x\r|^p~dx + \sum_{i=1}^M \sum_{j=1}^{m_i}~ \int_{B_{r_{i,j}}\l(x_{i,j}\r)}~\l|u-S_{i,j}\r|^p~dx \\ \leq \l|U - \DS \cup_{i=1}^M \DS \cup_{j=1}^{m_i}  B_{r_{i,j}}\l(x_{i,j}\r)\r|\l(\l|\l|u\r|\r|_{L^{\infty}\l(U,\R^n\r)} + \underset{x \in U}{\sup}~ \l|x\r| \r)^p + \\ + 2^p \sum_{i=1}^M \sum_{j=1}^{m_i}~ \int_{B_{r_{i,j}}\l(x_{i,j}\r)}~\l|u-s_{i,j}\r|^p~dx + 2^p \sum_{i=1}^M \sum_{j=1}^{m_i}~ \int_{B_{r_{i,j}}\l(x_{i,j}\r)}~\l|S_{i,j}-s_{i,j}\r|^p~dx \\ \underset{(\hreff{SG_cor_bdd2}) \text{ and } (\hreff{SG_cor_bdd3})}{\leq} \l|U - \DS \cup_{i=1}^M \DS \cup_{j=1}^{m_i}  B_{r_{i,j}}\l(x_{i,j}\r)\r|\l(\l|\l|u\r|\r|_{L^{\infty}\l(U,\R^n\r)} + \underset{x \in U}{\sup}~ \l|x\r| \r)^p + \\ +2^p~\widetilde{C}\int_U~\l|1-\det Du\r|^p~dx + 2^p\int_U~ K^p\l(Du\r) \dist^p\l(Du,Sp(2n)\r) \ dx \\ \leq \l|U -V\r|\l(\l|\l|u\r|\r|_{L^{\infty}\l(U,\R^n\r)} + \underset{x \in U}{\sup}~ \l|x\r| \r)^p + \\ + \l|V - \DS \cup_{i=1}^M \DS \cup_{j=1}^{m_i}  B_{r_{i,j}}\l(x_{i,j}\r)\r|\l(\l|\l|u\r|\r|_{L^{\infty}\l(U,\R^n\r)} + \underset{x \in U}{\sup}~ \l|x\r| \r)^p + \\ + 2^p~\widetilde{C}\int_U~\l|1-\det Du\r|^p~dx + 2^p\int_U~ K^p\l(Du\r) \dist^p\l(Du,Sp(2n)\r) \ dx \\ \underset{(\hreff{SLG_prop_eqn4}) \text{ and } (\hreff{SLG_prop_eqn5})}{\leq} \l(\delta_1 + \delta_2\r)\l(\l|\l|u\r|\r|_{L^{\infty}\l(U,\R^n\r)} + \underset{x \in U}{\sup}~ \l|x\r| \r)^p + \\ + 2^p~\widetilde{C}\int_U~\l|1-\det Du\r|^p~dx + 2^p\int_U~ K^p\l(Du\r) \dist^p\l(Du,Sp(2n)\r) \ dx 
  \end{gather*}
  \begin{gather*}
  \underset{(\hreff{SLG_prop_eqn1}) \text{ and } (\hreff{SLG_prop_eqn2})}{\leq} \l(2+2^p\r)\widetilde{C}\int_U~\l|1-\det Du\r|^p~dx + 2^p\int_U~ K^p\l(Du\r) \dist^p\l(Du,Sp(2n)\r) \ dx \\ \underset{(\hreff{SG_cor_bdd1})}{\leq} \l(2^p + \l(2+2^p\r)\widetilde{C}~C^p_6 \r)  \int_U~ K^p\l(Du\r) \dist^p\l(Du,Sp(2n)\r) \ dx 
\end{gather*}
Set $C:=2^p + \l(2+2^p\r)\widetilde{C}~C^p_6 $. Note that $C:=C\l(n,p,\lambda,\Lambda\r)$ because $\widetilde{C}:=\widetilde{C}\l(n,p,\lambda,\Lambda\r)$ and $C_6:=C_6\l(n,\lambda,\Lambda\r)$.
\end{proof}
Recall that for $A \in \operatorname{Mat}^{n \times n}$ we have $\l(I + \epsilon A\r)^T~J~\l(I + \epsilon A\r) = J + \epsilon\l(JA - \l(JA\r)^T\r) + o(\epsilon)$. Therefore $sp(2n)$ is a linear approximation to $Sp(2n)$ near the identity matrix. Near the identity map we can approximate the collection of symplectic maps by flows of Hamiltonian vector fields. Therefore an estimate comparable to Proposition \hreff{SG_prop} should bound the deviation of a map from flows of Hamiltonian vector fields by the deviation of its derivative from $sp(2n)$.

\begin{fact} \label{SG_linear_fact}
Let $1 \leq p < \infty$. Take $V \subset \R^{n}$ be open, bounded, and convex. Take $z \in W^{1,p}(V,\R^{n})$. If $Dz(x)=Dz^T(x)$ for a.e. $x \in V$, then there exists $\alpha \in W^{2,p}(V)$ such that $z = D\alpha$.
\end{fact}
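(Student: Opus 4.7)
The approach is Poincar\'{e}'s lemma on the convex region $V$: the symmetry $Dz=Dz^T$ means that the 1-form $\sum_i z_i\,dx_i$ is distributionally closed, hence exact on a simply connected domain. The plan is to realize the primitive $\alpha$ explicitly by line integration from a fixed base point, and to handle the Sobolev regularity by mollification.

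First I would fix a base point $x_0 \in V$ and, for $\epsilon>0$, let $V_\epsilon:=\{x \in V : \dist(x,\partial V) > \epsilon\}$ and $z_\epsilon := z * \eta_\epsilon \in C^\infty(V_\epsilon,\R^n)$ for a standard mollifier $\eta_\epsilon$. Since mollification commutes with distributional derivatives, $Dz_\epsilon = Dz_\epsilon^T$ pointwise on $V_\epsilon$, and $z_\epsilon \to z$ in $W^{1,p}_{\mathrm{loc}}(V,\R^n)$. On any open convex $V' \subset V_\epsilon$ containing $x_0$, define
\[
\alpha_\epsilon(x) := \int_0^1 z_\epsilon\bigl(x_0 + t(x-x_0)\bigr)\cdot(x-x_0)\,dt.
\]
Differentiating under the integral and substituting the symmetry $\partial_k z_{\epsilon,i} = \partial_i z_{\epsilon,k}$ gives
\[
\partial_k\alpha_\epsilon(x) = \int_0^1\!\Bigl[z_{\epsilon,k}\bigl(x_0+t(x-x_0)\bigr) + t\sum_i \partial_i z_{\epsilon,k}\bigl(x_0+t(x-x_0)\bigr)(x_i-x_{0,i})\Bigr]\,dt = \int_0^1 \frac{d}{dt}\Bigl[t\,z_{\epsilon,k}\bigl(x_0+t(x-x_0)\bigr)\Bigr]\,dt = z_{\epsilon,k}(x),
\]
so $D\alpha_\epsilon = z_\epsilon$ on $V'$.

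Next I would pass to the limit via an exhaustion. Choose open convex sets $V_1 \subset V_2 \subset \cdots$ with $x_0 \in V_1$ and $\bigcup_k V_k = V$. For $\epsilon$ small enough that $V_k \subset V_\epsilon$, the family $\{D\alpha_\epsilon\}_{\epsilon} = \{z_\epsilon\}_\epsilon$ is Cauchy in $W^{1,p}(V_k,\R^n)$. After normalizing $\alpha_\epsilon$ by subtracting its mean on $V_k$, the Poincar\'{e} inequality on the bounded convex set $V_k$ gives a Cauchy sequence in $L^p(V_k)$, whose limit $\alpha^{(k)} \in W^{2,p}(V_k)$ satisfies $D\alpha^{(k)} = z$. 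By uniqueness of primitives up to additive constants, the $\alpha^{(k)}$ patch together consistently to yield $\alpha \in W^{2,p}_{\mathrm{loc}}(V)$ with $D\alpha = z$ on $V$. Finally, since $V$ is bounded and convex (hence has Lipschitz boundary), the global Poincar\'{e} inequality applied to $\alpha - \fint_V \alpha\,dx$ gives $\alpha - \fint_V \alpha\,dx \in L^p(V)$; replacing $\alpha$ by $\alpha - \fint_V \alpha\,dx$ produces the required $\alpha \in W^{2,p}(V)$.

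The main technical obstacle is justifying the differentiation under the integral at $W^{1,p}$ regularity and controlling $\alpha$ itself (not just $D\alpha$) in $L^p(V)$. The mollification step reduces the former to the textbook smooth computation, and the Poincar\'{e} inequality on bounded convex domains handles the latter. Convexity of $V$ enters in two places: it ensures that the segment $[x_0,x]$ lies in $V$ so that the line integral defining $\alpha$ makes sense at every $x \in V$, and it guarantees the Lipschitz boundary needed for the global Poincar\'{e} inequality.
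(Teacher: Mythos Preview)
Your proposal is correct and follows the same strategy as the paper: mollify, apply the Poincar\'e lemma on compactly contained convex subsets to obtain smooth primitives $\alpha_\epsilon$ with $D\alpha_\epsilon=z_\epsilon$, pass to the limit in $W^{2,p}$, and patch over an exhaustion (the paper uses the scaled copies $V_T=\{(1-t)x_0+tx:x\in\partial V,\ 0\le t<T\}$ and invokes the Poincar\'e lemma by name rather than writing out your line-integral formula, but the argument is identical). One small point: in your final step $\fint_V\alpha$ is not yet defined since you only know $\alpha\in W^{2,p}_{\mathrm{loc}}(V)$; instead apply Poincar\'e on each $V_k$ (the constant is $\lesssim\diam V$ for convex sets, hence uniform in $k$) to bound $\|\alpha-\fint_{V_k}\alpha\|_{L^p(V_k)}$ uniformly and deduce that $\alpha$, suitably normalized, lies in $L^p(V)$.
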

\begin{proof}
Fix $x_0 \in V$. For $0 \leq T < 1$ take  $V_T:=\l\lb (1-t) x_0 + t x : x \in \partial V, \; 0 \leq t < T \r\rb$. Note that $\overline{V_T} \subset \subset V$ implies that $\dist(V_T,\partial V) > 0$. For $0 < \varepsilon < \dist(V_T,\partial V)$ take $\beta^{(\varepsilon)} \in C^{\infty}_{\text{cpt}}\l(B_{\varepsilon}(0) \r)$ standard mollifier. Set $z_{\varepsilon}^i(x) := \int_V~ \beta^{(\varepsilon)}(x-y) z^i(y) \ dy$ for $x \in V_T$ and $z_{\varepsilon}:=\l(z_\varepsilon^i\r)_{i=1}^n$. We have  
\begin{gather*}
\l( z_{\varepsilon}^i \r)_{x_j}(x) = \int_V~ \beta^{(\varepsilon)}(x-y) z^i_{x_j}(y) \ dy \\ \underset{Dz=Dz^T}{=} \int_V~ \beta^{(\varepsilon)}(x-y) z^j_{x_i}(y) \ dy = \l( z_{\varepsilon}^j \r)_{x_i}(x)
\end{gather*}
for all $x \in V_T$. Since $V_T$ is simply connected, the Poincar\'{e} lemma implies the existence of $\alpha^{(\varepsilon)}_T \in C^{\infty}(V_T)$ such that $D \alpha^{(\varepsilon)}_T = z_{\varepsilon}$. Assume $\int_{V_T} \alpha^{(\varepsilon)}_T \ dx = 0$. Note $\l|\l|z_{\varepsilon}-z\r|\r|_{W^{1,p}(V_T)} \underset{\epsilon \to 0}{\to} 0$ implies the existence of $\alpha_T \in W^{2,p}(V_T)$ such that $\l|\l|\alpha^{(\varepsilon)}_T-\alpha_T\r|\r|_{W^{2,p}(V_T)} \underset{\epsilon \to 0}{\to} 0$. For any $0\leq T \leq S <1$, we have have $D\alpha_T =Dz= D \alpha_S$ in $V_T$. This implies that $\alpha_S = \alpha_T + C_{T,S}$ where $C_{T,S} \in \R$. Set $$\alpha(x) := \begin{cases} \alpha_{\frac{1}{2}}(x) & \text{ for } x \in V_{\frac{1}{2}} \\ \alpha_{1-\frac{1}{2^n}}(x) - C_{1-\frac{1}{2^{n-1}},1-\frac{1}{2^n}} & \text{ for } x \in V_{1-\frac{1}{2^n}} - V_{1-\frac{1}{2^{n-1}}} \end{cases}$$ for $n>1$. 
\end{proof}
\begin{lemma} \label{SG_linear_lemma}
Take $v \in W^{1,p}(U,\R^{n})$ with $1 < p < \infty$. If $U \subset \R^{n}$ is convex, then 
\begin{equation} \label{SG_linear_lemma_eqn0}
\int_U~ \l|Dv\r|^p\ dx \leq C_8 \int_U~ \l|\div v\r|^p + \l| Dv - Dv^T \r|^p\ dx +  C_8 \int_{\partial U} \l| v \cdot \nu \r|^p \ dS  
\end{equation}
for a constant $C_8:=C_8(p,U)$.
\end{lemma}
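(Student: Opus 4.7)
Mirroring the strategy of Proposition \hreff{SLG_linear} but incorporating boundary data, my plan is to apply a Helmholtz-type decomposition on $U$. Let $\alpha \in W^{2,p}(U)$ solve the Neumann problem $\Delta \alpha = \div v$ in $U$ with $\partial_\nu \alpha = v \cdot \nu$ on $\partial U$; the compatibility condition $\int_U \div v \, dx = \int_{\partial U} v \cdot \nu \, dS$ follows from the divergence theorem. Setting $w := v - D\alpha$ gives a vector field with $\div w = 0$ in $U$ and $w \cdot \nu = 0$ on $\partial U$; because $D^2 \alpha$ is symmetric, one has $Dw - Dw^T = Dv - Dv^T$.

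The estimate then splits into two auxiliary bounds. The first is a Korn-type estimate for the solenoidal part with no-flux boundary condition,
\[
\int_U \l|Dw\r|^p \, dx \leq C \int_U \l|Dw - Dw^T\r|^p \, dx.
\]
I would establish this via the identity $\Delta w^k = \partial_j (Dw - Dw^T)^{kj}$ (which uses $\div w = 0$ and the commutation of partial derivatives), applying Calder\'{o}n-Zygmund bounds together with the boundary condition $w \cdot \nu = 0$. Triviality of the kernel---namely, any $w$ with $Dw$ symmetric, $\div w = 0$, and $w \cdot \nu = 0$ is by Fact \hreff{SG_linear_fact} a gradient $D\beta$ with $\beta$ harmonic and $\partial_\nu \beta = 0$, hence constant, hence $w \equiv 0$---closes the argument through a standard Fredholm/compactness procedure. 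The second auxiliary bound is the Neumann-regularity estimate
\[
\int_U \l|D^2 \alpha\r|^p \, dx \leq C \int_U \l|\div v\r|^p \, dx + C \int_{\partial U} \l|v \cdot \nu\r|^p \, dS.
\]
Combining the two via $\l|Dv\r| \leq \l|Dw\r| + \l|D^2 \alpha\r|$ in $L^p$ and substituting $Dw - Dw^T = Dv - Dv^T$ yields the lemma.

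The main obstacle is the second auxiliary bound. The standard $W^{2,p}$-regularity for the Neumann problem controls $\int_U \l|D^2 \alpha\r|^p \, dx$ in terms of the fractional Sobolev trace norm $\l|\l|v \cdot \nu\r|\r|_{W^{1-1/p,p}(\partial U)}$, whereas the lemma requires the weaker pure $L^p(\partial U)$ norm on the right-hand side. The convexity of $U$ enters essentially at this point: the nonnegative second fundamental form of $\partial U$ permits a boundary integration by parts that absorbs the missing tangential regularity back into bulk terms controlled by $\div v$ and $Dv - Dv^T$, thereby closing the $L^p$ estimate with the desired boundary norm and delivering a constant $C_8 = C_8(p,U)$.
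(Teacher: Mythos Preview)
Your decomposition strategy is natural, but the second auxiliary bound is false as stated: the Neumann estimate
\[
\int_U \l|D^2\alpha\r|^p\,dx \;\leq\; C\!\int_U \l|\Delta\alpha\r|^p\,dx \;+\; C\!\int_{\partial U}\l|\partial_\nu\alpha\r|^p\,dS
\]
does not hold even on smooth convex domains. Take $\alpha$ harmonic with Neumann data $g\in L^p(\partial U)\setminus W^{1-1/p,p}(\partial U)$; the right side is finite while $D^2\alpha\notin L^p(U)$. Your proposed repair---absorbing the missing tangential regularity into terms controlled by $Dv-Dv^T$---cannot work, because $\alpha$ is determined solely by $\div v$ and $v\cdot\nu$ and carries no information about the antisymmetric part of $Dv$. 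Convexity buys $W^{2,p}$ regularity for the \emph{homogeneous} Neumann problem (Grisvard), not for arbitrary $L^p$ boundary data. The first auxiliary bound is also not as routine as suggested: the identity $\Delta w^k=\partial_j(Dw-Dw^T)^{kj}$ is an elliptic system for which the single scalar condition $w\cdot\nu=0$ is underdetermined, so Calder\'on--Zygmund alone does not close it; one still needs a compactness argument of exactly the type the paper supplies.

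The paper sidesteps both issues by not decomposing $v$ at all. It argues by contradiction: normalize a violating sequence $v^{(k)}$ in $W^{1,\tilde p}$ with $\tilde p=\max\{p,p/(p-1)\}$, extract a weak limit $w$, identify $w\equiv 0$ via your kernel analysis (Fact~\ref{SG_linear_fact} plus uniqueness for the homogeneous Neumann problem---this part of your sketch is correct and matches Step~(1) of the paper), and then upgrade weak to strong convergence of $Dv^{(k)}\to 0$ via the div--curl lemma. The hypotheses $\div v^{(k)}\to 0$ in $L^{\tilde p}$ and $Dv^{(k)}-(Dv^{(k)})^T\to 0$ in $L^p$ are used precisely to place $\{\div v^{(k)}_{x_j}\}$ and $\{\curl v^{(k)}_{x_j}\}$ in compact subsets of $W^{-1,q}$, which is what compensated compactness needs to conclude $v^{(k)}_{x_j}\cdot v^{(k)}_{x_j}\rightharpoonup 0$. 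This route never requires an $L^p$-boundary Neumann estimate, and that is why the lemma holds on merely convex (Lipschitz) $U$ without further boundary regularity.
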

\begin{proof}
Set $\widetilde{p} := \max\l\lb p,\frac{p}{p-1} \r\rb$. Set $\varepsilon := \int_U~ \l|\div v\r|^p + \l| Dv - Dv^T \r|^p\ dx + \int_{\partial U} \l| v \cdot \nu \r|^p \ dS$. Step (1) treats $\epsilon=0$. Step (2) and Step (3) treat $\epsilon>0$. 

\textbf{(1)} Take $\varepsilon = 0$. Fact \hreff{SG_linear_fact} implies that $v = D \alpha$ where $\alpha \in W^{2,p}(U)$. Note $$\begin{cases} \Delta \alpha = 0 & \text{ in } U \\ D\alpha \cdot \nu = 0 & \text{ on } \partial U \end{cases} \ .$$ The Neumann problem has a weak solution in $U$ unique up to constant \cite[p.~2147]{GS}. Therefore $v \equiv 0$.

\textbf{(2)} Take $\varepsilon > 0$. Assume $v \in C^{\infty}(\overline{U},\R^n)$ throughout Step (2). Suppose to the contrary that (\hreff{SG_linear_lemma_eqn0}) does not hold for any $C_8>0$. For all $k>0$, there exist $v^{(k)} \in C^{\infty}(\overline{U},\R^n)$ such that 
\begin{equation} \label{SG_linear_lemma_eqn2}
\int_U~ \l|Dv^{(k)}\r|^{p}\ dx > k \int_U~ \l|\div v^{(k)}\r|^{p} + \l| Dv^{(k)} - \l(Dv^{(k)}\r)^T \r|^{p}\ dx + \int_{\partial U} \l| v^{(k)} \cdot \nu \r|^{p} \ dS \ .
\end{equation}
Note $0<\l|\l| v^{(k)} \r|\r|_{W^{1,\widetilde{p}}(U,\R^n)}$. Having scaled by $\l|\l| v^{(k)} \r|\r|^{-p}_{W^{1,\widetilde{p}}(U,\R^n)}$, we can assume that 
\begin{equation} \label{SG_linear_lemma_eqn1}
 \l|\l| v^{(k)} \r|\r|_{W^{1,\widetilde{p}}(U,\R^n)} = 1 \ . 
 \end{equation}
There exists $w \in W^{1,\widetilde{p}}(U,\R^n)$ and a subsequence denoted $\l\lb v^{(k)} \r\rb_{k=1}^{\infty}$ such that 
\begin{equation} \label{SG_linear_lemma_eqn3}
v^{(k)}  \underset{k \to \infty}{\to} w \text{ and } Dv^{(k)} \underset{k \to \infty}{\rightharpoonup} Dw 
\end{equation}
in $L^{\widetilde{p}}(U,\R^n)$. For all $\gamma \in C^{\infty}(\overline{U})$
\begin{gather*}
 \l|\l| \gamma  \r|\r|_{L^{\frac{p}{p-1}}(U)} \cdot \l|\l| \div v^{(k)}  \r|\r|_{L^{p}(U)} + C \l|\l| \gamma \r|\r|_{W^{1,\frac{p}{p-1}}(U)} \cdot \l|\l| v^{(k)} \cdot \nu \r|\r|_{L^{p}(\partial U)} \\ \geq -\int_U~ \gamma \div v^{(k)}\ dx + \int_{\partial U} \gamma v^{(k)} \cdot \nu \ dS \\ = \int_U~ D\gamma \cdot v^{(k)} \ dx 
\end{gather*} 
where $C:=C(U)$ \cite[p.~133]{EG}. Note that $\underset{k \to \infty}{\operatorname{lim}}~\int_U~ D\gamma \cdot v^{(k)} \ dx \underset{(\hreff{SG_linear_lemma_eqn3})}{=} \int_U~ D\gamma \cdot w \ dx$.
Note that (\hreff{SG_linear_lemma_eqn2}) implies $\l|\l| v^{(k)} \cdot \nu \r|\r|_{L^{p}(\partial U)} \to 0$ and $\l|\l| \div v^{(k)} \r|\r|_{L^{p}(U)} \to 0$. Therefore 
\begin{equation} \label{SG_linear_lemma_eqn} 
 \int_U~ D\gamma \cdot w \ dx = 0
\end{equation}
for all $\gamma \in C^{\infty}(\overline{U})$. Note that for any $N \leq k_1 < \ldots < k_M$ and $0< c_{k_j} \leq 1$ with $\sum_{j=1}^M c_{k_j} = 1$, (\hreff{SG_linear_lemma_eqn2}) implies 
\begin{gather*}
 \l|\l|\sum_{j=1}^M c_{k_j}\l( Dv^{(k_j)} - \l(Dv^{(k_j)}\r)^T \r)\r|\r|_{L^p(U,\R^{n\times n})} \\ \leq \underset{1 \leq j \leq M}{\sup} \l|\l| Dv^{(k_j)} - \l(Dv^{(k_j)}\r)^T \r|\r|_{L^p(U,\R^{n\times n})} \sum_{j=1}^M c_{k_j} \\ \leq \underset{k\geq N}{\sup} \l|\l| Dv^{(k)} - \l(Dv^{(k)}\r)^T \r|\r|_{L^p(U,\R^{n\times n})} \underset{N \to \infty}{\to} 0 
\end{gather*}
By (\hreff{SG_linear_lemma_eqn3}), Mazur's theorem gives $Dw=Dw^T$. By Fact \hreff{SG_linear_fact}, we have $w = D \alpha$ where $\alpha \in W^{2,p}(U)$. Therefore (\hreff{SG_linear_lemma_eqn}) implies $$\begin{cases} \Delta \alpha = 0 & \text{ in } U \\ D\alpha \cdot \nu = 0 & \text{ on } \partial U \end{cases} \ .$$ The Neumann problem has a weak solution in $U$ unique up to constant \cite[p.~2147]{GS}. Therefore $w \equiv 0$. Note $$\underset{k\to \infty}{\operatorname{lim}}~\int_U~ \l|Dv^{(k)} - \l(Dv^{(k)}\r)^T \r|^p\ dx \underset{(\hreff{SG_linear_lemma_eqn2})}{=} 0$$ implies that $\l\lb\operatorname{curl} v^{(k)}_{x_j}\r\rb_{k=1}^{\infty}$ lies in a compact subset of $W^{-1,p}(U,\R^{n\times n})$ for $j=1,\ldots,n$. Note $$\underset{k\to \infty}{\operatorname{lim}}~ \int_U~ \l|\div v^{(k)}\r|^p \ dx  \underset{(\hreff{SG_linear_lemma_eqn2})}{=} 0$$ implies the existence of a subsequence denoted $\l\lb v^{(k)} \r\rb_{k=1}^{\infty}$ such that $\div v^{(k)}(x) \to 0$ for a.e. $x \in U$. Note $\l|\l| v^{(k)} \r|\r|_{W^{1,\widetilde{p}}(U,\R^n)} \underset{(\hreff{SG_linear_lemma_eqn1})}{=} 1$ shows that $\underset{k>0}{\sup}~ \l|\l| \div v^{(k)} \r|\r|_{L^{\widetilde{p}}(U)} < \infty$. Therefore dominated convergence implies that $$\int_U~ \l|\div v^{(k)}\r|^{\widetilde{p}} \ dx  \underset{k \to \infty}{\to} 0 \ .$$ This implies that $\l\lb \div v^{(k)}_{x_j} \r\rb_{k=1}^{\infty}$ lies in a compact subset of $W^{-1,\frac{p}{p-1}}(U)$ for $j=1,\ldots,n$. Therefore the div-curl lemma \cite[p.~53]{Evans} implies that for all $\gamma \in C^{\infty}_{\text{cpt}}(U)$ 
$$\int_U~ v^{(k)}_{x_j} \cdot v^{(k)}_{x_j} \gamma \ dx \to \int_U~ w_{x_j} \cdot w_{x_j} \gamma \ dx \underset{w\equiv 0}{=} 0 \ .$$ where $j=1,\ldots,n$. This implies that $\int_U~ \l| Dv^{(k)} \r|^2 \ dx \underset{k \to \infty}{\to} 0$. Therefore there exists a subsequence denoted $\l\lb v^{(k)} \r\rb_{k=1}^{\infty}$ such that $Dv^{(k)}(x) \to 0$ for a.e. $x \in U$. Note $\l|\l| v^{(k)} \r|\r|_{W^{1,\widetilde{p}}(U,\R^n)} \underset{(\hreff{SG_linear_lemma_eqn1})}{=} 1$ shows that $\underset{k>0}{\sup}~\l|\l| Dv^{(k)} \r|\r|_{L^{\widetilde{p}}(U,\R^{n \times n})} <\infty$. Therefore dominated converegence implies that $\int_U~ \l|Dv^{(k)} \r|^{\widetilde{p}} \ dx \underset{k \to \infty}{\to} 0$. Since $\underset{k \to \infty}{\operatorname{lim}}~ v^{(k)} \underset{(\hreff{SG_linear_lemma_eqn3})}{=} w \equiv 0$, we have $\int_U~ \l|v^{(k)} \r|^{\widetilde{p}} \ dx \underset{k \to \infty}{\to} 0$. This contradicts (\hreff{SG_linear_lemma_eqn1}).

\textbf{(3)} For any $\delta > 0$, there exists $v_{\delta} \in C^{\infty}(\overline{U},\R^n)$ such that $\l|\l|v - v_{\delta} \r|\r|_{W^{1,p}(U,\R^n)} \leq \delta$ \cite[p.~127]{EG}. This implies that 
\begin{equation} \label{SG_linear_lemma_eqn4}
\begin{gathered}
 \int_{\partial U} \l|v \cdot \nu - v_{\delta} \cdot \nu \r|^p\ dS \leq \l|\l|\nu\r|\r|^p_{L^{\infty}\l(\partial U,\R^n\r)}  \int_{\partial U} \l|v - v_{\delta} \r|^p\ dS \\ \leq C \int_U~ \l|Dv - Dv_{\delta} \r|^p + \l|v - v_{\delta} \r|^p \ dx \leq 
\end{gathered}
\end{equation}
where $C:=C(p,U)$ \cite[p.~133]{EG}. Therefore 
\begin{equation} \label{SG_linear_lemma_eqn5}
\begin{gathered}
\int_U~ \l|Dv\r|^p \ dx \leq 2^p \int_U~ \l|Dv-Dv_{\delta}\r|^p + \l|D v_{\delta} \r|^p \ dx \\ \underset{\text{Step (2)}}{\leq} 2^p \int_U~ \l|Dv-Dv_{\delta}\r|^p\ dx +\\+ C \int_U~ \l|\div v_{\delta}\r|^p + \l| Dv_{\delta} - Dv_{\delta}^T \r|^p\ dx + \int_{\partial U} \l| v_{\delta} \cdot \nu \r|^p \ dS \\ \underset{(\hreff{SG_linear_lemma_eqn4})}{\leq} C \int_U~ \l|Dv-Dv_{\delta}\r|^p + \l|v-v_{\delta}\r|^p \ dx +\\+ C \int_U~ \l|\div v\r|^p + \l| Dv - Dv^T \r|^p\ dx + \int_{\partial U} \l| v \cdot \nu \r|^p \ dS \\ \leq C~\delta^p + C \int_U~ \l|\div v\r|^p + \l| Dv - Dv^T \r|^p\ dx + \int_{\partial U} \l| v \cdot \nu \r|^p \ dS
\end{gathered}
\end{equation}
where $C:=C(p,U)$. Take $\delta:=\varepsilon^{\frac{1}{p}}$. By (\hreff{SG_linear_lemma_eqn5}), we have $\int_U~ \l|Dv\r|^p \ dx \leq 2 C~\epsilon$. Set $C_8:=2C$. Note $C_8:=C_8(p,U)$ because $C:=C(p,U)$.
\end{proof}
Lemma \hreff{SG_linear_lemma} uses a compensated compactness argument. For $v=\l(v^i\r)_{i=1}^n \in W^{1,p}_0(U,\R^n)$, we can give a different argument. Extend $v$ to $\R^n$. Denote the Fourier transform of $v^i_{x_j}$ by $\widehat{v^i_{x_j}}$. Note that  $$\widehat{v^i_{x_j}} = \l( \frac{1}{\xi_i \xi_j} \sum_{\ell=1}^n \xi^2_{\ell} \r)^{-1} \l( \l( \sum_{\ell=1}^n v^{\ell}_{x_{\ell}} \r)^{\wedge}  + \sum_{k \neq i} \frac{\xi_k}{\xi_i} \l(v^i_{x_k} - v^k_{x_i}\r)^{\wedge}\r)  \ .$$ Note that for all $1\leq i,~j,~k \leq n$
\begin{equation} \label{SG_multiplier_1}
 \l( \frac{1}{\xi_i \xi_j} \sum_{\ell=1}^n \xi^2_{\ell} \r)^{-1} \text{ and } \l( \frac{1}{\xi_i \xi_j} \sum_{\ell=1}^n \xi^2_{\ell} \r)^{-1} ~ \frac{\xi_k}{\xi_i}
\end{equation}
are homogeneous of degree 0, and $C^{\infty}$ away from the origin. Therefore the operators determined by multipliers (\hreff{SG_multiplier_1}) are bounded in $L^p$ \cite[p.~109]{S}. This shows (\hreff{SG_linear_lemma_eqn0}).
\begin{proposition} \label{SG_linearization} 
Let $1<p<\infty$. Take $u \in W^{1,p}(U,\R^{2n})$. If $U \subset \R^{2n}$ is convex with $C^{1,1}$ boundary, then there exists $\psi \in W^{2,p}(U)$ such that $$\int_U~ \l|Du - J D^2 \psi\r|^p\ dx \leq C \int_U~ \dist^p\l(Du,sp(2n)\r)\ dx$$                                         where $C:=C(p,U)$. Note that we can remove any assumption on $\partial U$ for $p=2$.
\end{proposition}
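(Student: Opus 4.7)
The plan is to reduce the statement to Lemma \hreff{SG_linear_lemma} by a Helmholtz-type decomposition. Note first that since $J$ is orthogonal ($J^T J = -J^2 = I$), for any matrix $A \in \operatorname{Mat}^{2n\times 2n}$ we have $|A| = |-JA|$, and a direct computation shows that the orthogonal projection of $A$ onto the subspace $sp(2n)$ equals $\frac{1}{2}(A + JA^TJ)$, whence
\[
2\dist(A, sp(2n)) \;=\; |A - JA^TJ| \;=\; |J(A - JA^TJ)| \;=\; |JA + A^T J|.
\]
Applied pointwise to $A = Du(x)$, this gives the identity $|Dv - Dv^T| = 2\dist(Du, sp(2n))$ where $v := -Ju$, because $Dv - Dv^T = -JDu - (Du)^T(-J)^T = -(JDu + (Du)^T J)$.

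The construction of $\psi$ is then the following. Let $\psi \in W^{2,p}(U)$ solve the Neumann problem
\[
\begin{cases} \Delta \psi = \operatorname{div} v & \text{in } U \\ D\psi \cdot \nu = v \cdot \nu & \text{on } \partial U \end{cases}
\]
(with $\int_U \psi\,dx = 0$ for uniqueness). The compatibility condition is satisfied by the divergence theorem, and existence together with the $W^{2,p}$ estimate on a convex domain with $C^{1,1}$ boundary follows from standard Calder\'on--Zygmund / Agmon--Douglis--Nirenberg theory for the Neumann Laplacian. For $p=2$ only, the $H^2$ regularity of the Neumann Laplacian holds on every bounded convex domain (Grisvard), which is why the boundary regularity can be dropped in that case.

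Now set $w := v - D\psi \in W^{1,p}(U,\R^{2n})$. By construction $\operatorname{div} w = 0$ in $U$ and $w \cdot \nu = 0$ on $\partial U$. Because $U$ is convex, Lemma \hreff{SG_linear_lemma} applies and yields
\[
\int_U |Dw|^p\,dx \;\leq\; C_8\!\int_U |\operatorname{div} w|^p + |Dw - Dw^T|^p\,dx + C_8\!\int_{\partial U} |w\cdot \nu|^p\,dS \;=\; C_8 \int_U |Dw - Dw^T|^p\,dx.
\]
Since $D^2\psi$ is symmetric, $Dw - Dw^T = Dv - Dv^T$; combining with the identity from the first paragraph and using $|Du - JD^2\psi| = |-JDu - D^2\psi| = |Dv - D^2\psi| = |Dw|$ pointwise, we conclude
\[
\int_U |Du - JD^2\psi|^p\,dx \;=\; \int_U |Dw|^p\,dx \;\leq\; C_8\!\int_U |Dv - Dv^T|^p\,dx \;=\; 2^p C_8\!\int_U \dist^p(Du, sp(2n))\,dx,
\]
so $C := 2^p C_8$ works, with $C=C(p,U)$.

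The main technical obstacle is the $W^{2,p}$ regularity of $\psi$: for general $1<p<\infty$ this requires the $C^{1,1}$ boundary assumption, whereas for $p=2$ the convex-domain $H^2$ theory of Grisvard removes it. The rest is essentially algebraic once the correct choice of $v = -Ju$ and the Helmholtz splitting $w = v - D\psi$ are made.
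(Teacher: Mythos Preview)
Your proof is correct and follows essentially the same approach as the paper: both reduce to Lemma~\ref{SG_linear_lemma} applied to $v - D\psi$ with $v = -Ju$ (the paper writes $u\cdot J$) after constructing $\psi$ so that $\operatorname{div}(v-D\psi)=0$ and $(v-D\psi)\cdot\nu=0$. The only cosmetic difference is that the paper builds $\psi=\eta-\phi$ in two steps (first a Poisson equation for $\eta$ via \cite[p.~230]{GT} to kill the divergence, then a homogeneous Neumann problem for $\phi$ via \cite[p.~126]{Grisvard} to fix the boundary flux), whereas you solve the inhomogeneous Neumann problem for $\psi$ in one shot; the resulting $\psi$ is the same.
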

\begin{proof}
Note that $sp(2n)$ is a subspace of $\operatorname{Mat}^{2n \times 2n}$. Suppose $A \in \operatorname{Mat}^{2n \times 2n}$. Recall that the nearest symmetric matrix to $JA$ is $\frac{1}{2}\l( JA + (JA)^T \r)$. This implies that 
\begin{equation} \label{SG_linearization_eqn1}
\begin{gathered}
\dist(A,sp(2n)) = \operatorname{min}\l\lb \l|A-B \r|~:~B \in sp(2n) \r\rb \\ \underset{J \in O(n)}{=} \operatorname{min}\l\lb |JA - JB| ~:~B \in sp(2n) \r\rb = \frac{1}{2}\l|JA - (JA)^T\r| \ . 
\end{gathered}
\end{equation}
There exists $\eta \in W^{2,p}(U)$ with $-\Delta \eta = \operatorname{div}\l(u\cdot J\r)$ in $U$ \cite[p.~230]{GT}. Set $v:= D\eta + u\cdot J$. Note $v \in W^{1,p}(U,\R^{2n})$ with $\operatorname{div} v = 0$. There exists $\phi \in W^{2,p}(U)$ with 
\begin{equation} \label{SG_linearization_eqn2}
\begin{cases} \Delta \phi = 0 & \text{ in } U \\ D\phi \cdot \nu = v \cdot \nu & \text{ on } \partial U \end{cases}
\end{equation}
\cite[p.~126]{Grisvard}. Note that a solution exists for $p=2$ without any assumption on $\partial U$ \cite[p.~149]{Grisvard}. Set $\psi = \eta - \phi$. We obtain
\begin{gather*}
 \int_U~ \l|Du - J D^2 \psi \r|^p\ dx \underset{J\in O(n)}{=} \int_U~ \l| J Du - J^2 D^2 \l( \eta - \phi \r) \r|^p\ dx \\ \underset{J^2=-I}{=} \int_U~\l|D^2 \l( \eta - \phi \r) + D\l( u\cdot J\r) \r|^p\ dx = \int_U~ \l|Dv - D^2 \phi\r|^p\ dx  \\ \underset{\text{Lemma }\hreff{SG_linear_lemma}}{\leq} C_8 \int_U~\l|\div \l(v-D\phi\r) \r|^p + \l|D\l(v-D\phi\r) - D\l(v-D\phi\r)^T\r|^p\ dx + \\ + C_8\int_{\partial U}~\l|\l(v-D\phi\r)\cdot \nu\r|^p~dS \\ \underset{(\hreff{SG_linearization_eqn2})}{=} C_8 \int_U~\l|D\l(v-D\phi\r) - D\l(v-D\phi\r)^T\r|^p\ dx \\ =C_8 \int_U~\l|Dv - Dv^T\r|^p\ dx =C_8 \int_U~\l|D\l(v-D\eta\r) - D\l(v-D\eta\r)^T\r|^p\ dx \\= C_8 \int_U~ \l|D\l(u\cdot J\r) - \l(D\l( u\cdot J\r)\r)^T\r|^p\ dx \\ = C_8 \int_U~ \l|J Du - \l(J Du\r)^T\r|^p\ dx \underset{(\hreff{SG_linearization_eqn1})}{=} 2^p~C_8 \int_U~ \dist^p\l(Du,sp(2n)\r)\ dx
\end{gather*}
Set $C:=2^p~C_8$. Note $C:=C(p,U)$ because $C_8:=C_8(p,U)$.
\end{proof}
Fact \hreff{SG_linear_fact} requires $U$ to be simply connected. However, convexity is appropriate because convexity appears in the Neumann problem with $p\neq 2$. Indeed, there exist Lipschitz domains where the Neumann problem cannot be solved for all $1<p<\infty$ without the convexity assumption \cite[p.~2148]{GS}. Moreover, Desvillettes-Villani treat Lemma \hreff{SG_linear_lemma} for $p=2$ in a convex region with regular boundary \cite[p.~617]{DV}. 
  
\vspace{-0.2cm}  
  
\section{Incompressible limits}  \label{NIM}

We apply the estimates of Section \hreff{OTM} to understand the trend of compressible deformations to incompressible deformatons for $0 \ll \kappa$. The main result of Section \hreff{NIM_Static} is Proposition \hreff{NIM_prop_static}. The main result of Section \hreff{NIM_Dynamic} is Proposition \hreff{NIM_prop}. Recall that the compressibility of a material is measured by the reciprocal of the bulk modulus $$\frac{1}{\kappa} = - \DS\frac{\l(\frac{\Delta V}{V} \r)}{\Delta P}$$ where $P$ denotes pressure, $V$ denotes volume and $\Delta V$ denotes volume change caused by pressure. For rubber-like materials $\kappa \approx 10^9 N/m^2$ meaning a $1\%$ volume decrease arises from $10^7 N/m^2$ of pressure. Typically, the volume will change by less than $0.01\%$ with fracture occuring for a change over $1\%$. Therefore rubber-like materials are nearly incompressible. For a nearly incompressible material, the energy function $W$ is modified to treat volume change as a material constraint. We express $W$ as $$W(A) = W^{\iso}(A) + \kappa W^{dil}(A) \ .$$  Arising from the arrangement of molecules, $W^{\iso}$ measures a structural response to deformation. Arising from molecule-molecule interactions, $W^{\dil}$ measures a fluid response to deformation. Through experiments on the material $U$ involving simple traction, simple shear, etc., we can determine $W^{\iso}$. If we assume $W^{\dil}\l(A\r) = w\l(\det A\r)$, then we can obtain a splitting $$W\l(A\r) = W^{\iso}\l(\DS\frac{1}{\det^{\frac{1}{3}}A}A\r) + \int_1^{\det A}~ w(x)~dx$$ \cite[p.~72]{Dacorogna}. The assumption is appropriate for nearly incompressible materials.
\subsection{Static limits} \label{NIM_Static}
Assume that $\partial U$ is $C^3$. Take $v \in C^3\l(\overline{U},\R^n\r)$ invertible with $v^{-1} \in C^3\l(v\l(\overline{U}\r),\R^n\r)$ and $\l|v(U)\r|=\l|U\r|$. For $p>n$, set $$\mathcal{A}:=\l\lb u \in W^{1,p}\l(U,\R^n\r)~:~u=v \text{ on } \partial U \r\rb$$ and $\mathcal{A}^{\iso}:=\l\lb u \in \mathcal{A}~:~ \det Du = 1 \text{ in } U \r\rb$. Take $W^{\iso} \in C^{\infty}\l(\operatorname{Mat}^{n\times n},\R_{\geq 0}\r)$ polyconvex with $c_1\l|A\r|^p - c_2 \leq W^{\iso}\l(A\r) \leq c_1\l|A\r|^p + c_2$ for all $A \in \operatorname{Mat}^{n\times n}$. Here $c_1,c_2>0$. 
\begin{example} \label{NIM_Dynamic_example} 
 For $n=3$, we have $W_{\text{neo-Hookean}}^{\iso}\l(A\r)=\l|\frac{1}{\det^{\frac{1}{n}} A}~A\r|^2 - n$ is polyconvex  but $W_{\text{Mooney-Rivlin}}^{\iso}\l(A\r)=\l|\frac{1}{\det^{\frac{1}{n}} A}~A\r|^2 - n + \l|\operatorname{cof}\l(\frac{1}{\det^{\frac{1}{n}} A}~A\r) \r|^2 - n$ is not polyconvex \cite[p.~7]{Dacorogna}. 
\end{example}
Let $0<\lambda \leq 1 \leq  \Lambda$. Take $w: \R \to \R\cup \infty$ convex with $w(1)=0$, and 
\begin{equation} \label{NIM_static_assumption}
w(x) \geq \begin{cases} c_3\l(1-x\r)^{\frac{p}{n}} & \text{ for } \lambda \leq x \leq \Lambda \\ \infty & \text{ otherwise }  \end{cases} \ .
\end{equation}
Here $c_3>0$. Assume $W^{\dil}(A) = w\l(\det A\r)$. Define $I_{\kappa}: \mathcal{A} \ni u \to \int_U~W\l(Du\r)~dx \in \R\cup \infty$ and $$I_{\infty}: \mathcal{A} \ni u \to \begin{cases} \int_U~W^{\iso}\l(Du\r)~dx & \text{ for } u \in \mathcal{A}^{\iso} \\ \infty & \text{ otherwise }  \end{cases} \in \R\cup \infty \ .$$
\begin{lemma} \label{NIM_lemma_gamma}
There exists $u_{\kappa} \in \operatorname{argmin}\l\lb I_{\kappa}(u)~:~ u \in \mathcal{A} \r\rb$. There exists \newline $u_{\infty} \in \operatorname{argmin}\l\lb I_{\infty}(u)~:~ u \in \mathcal{A} \r\rb$. Moreover $\underset{\kappa \to \infty}{\Gamma-\lim}~I_{\kappa} = I_{\infty}$ in the weak topology on $\mathcal{A}$.
\end{lemma}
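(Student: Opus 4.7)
The plan is to establish existence of minimizers by the direct method of the calculus of variations, and then prove $\Gamma$-convergence by verifying the standard liminf inequality together with construction of a recovery sequence. For existence of $u_\kappa$, the lower bound $c_1|A|^p - c_2 \leq W^{\iso}(A)$ combined with $w \geq 0$ yields coercivity of $I_\kappa$ on $W^{1,p}\l(U,\R^n\r)$, while weak lower semicontinuity follows from polyconvexity of $W^{\iso}$ and convexity of $w$ (together with weak continuity of the minors $\operatorname{cof}\l(Du\r)$ and $\det Du$, valid for $p>n$). Since the Dirichlet condition $u=v$ on $\partial U$ is preserved under weak convergence, $\mathcal{A}$ is weakly closed and a minimizing sequence extracts a weakly convergent subsequence whose limit lies in $\operatorname{argmin}~I_\kappa$. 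For $u_\infty$, the set $\mathcal{A}^{\iso}$ is nonempty by a classical existence result of Dacorogna-Moser, which uses $\l|v(U)\r|=\l|U\r|$ and the regularity of $v$ and $\partial U$ to produce an incompressible extension of $v|_{\partial U}$; weak closedness of $\mathcal{A}^{\iso}$ in $\mathcal{A}$ follows from weak continuity of $\det Du$, so the direct method again applies.

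For the recovery inequality, given $u \in \mathcal{A}$ it is enough to consider $u \in \mathcal{A}^{\iso}$ (otherwise $I_\infty(u)=\infty$ and any sequence suffices): take the constant sequence $u_\kappa \equiv u$, so that $W^{\dil}\l(Du\r)= w(1)=0$ and $I_\kappa\l(u\r) = \int_U W^{\iso}\l(Du\r)~dx = I_\infty\l(u\r)$, giving $\limsup_{\kappa \to \infty} I_\kappa\l(u_\kappa\r) \leq I_\infty\l(u\r)$ trivially.

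The liminf inequality is the heart of the argument. Suppose $u_\kappa \rightharpoonup u$ in $W^{1,p}$ with $L:=\liminf_{\kappa} I_\kappa\l(u_\kappa\r) < \infty$; pass to a subsequence along which $I_\kappa\l(u_\kappa\r) \to L$. From $\kappa \int_U w\l(\det Du_\kappa\r)\ dx \leq L+1$ and hypothesis (\hreff{NIM_static_assumption}), the bound $w=\infty$ off $[\lambda,\Lambda]$ forces $\lambda \leq \det Du_\kappa \leq \Lambda$ a.e., while the lower bound $w(x) \geq c_3 \l(1-x\r)^{p/n}$ on this range gives $\int_U \l|1 - \det Du_\kappa\r|^{p/n} dx \leq (L+1)/(c_3 \kappa) \to 0$, i.e. $\det Du_\kappa \to 1$ in $L^{p/n}\l(U\r)$. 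Since $p>n$, Ball's weak continuity theorem for the Jacobian gives $\det Du_\kappa \rightharpoonup \det Du$ in $L^{p/n}\l(U\r)$, so $\det Du \equiv 1$ and $u \in \mathcal{A}^{\iso}$. The polyconvexity of $W^{\iso}$ with its $p$-growth then yields $\int_U W^{\iso}\l(Du\r)~dx \leq \liminf_{\kappa} \int_U W^{\iso}\l(Du_\kappa\r)~dx \leq L$, which is exactly $I_\infty\l(u\r) \leq \liminf_{\kappa} I_\kappa\l(u_\kappa\r)$.

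The main obstacle is the passage to the incompressibility constraint in the limit: it rests on the weak continuity of the Jacobian determinant in $W^{1,p}$ for $p>n$, a nontrivial but standard result that underlies both the weak closedness of $\mathcal{A}^{\iso}$ used for existence of $u_\infty$ and the identification $\det Du = 1$ in the liminf step. A secondary technical point is ensuring that $\mathcal{A}^{\iso}$ is nonempty, which is the content of the Dacorogna-Moser construction and requires the compatibility condition $\l|v(U)\r|=\l|U\r|$ together with the smoothness of $v$ and $\partial U$ built into the hypotheses.
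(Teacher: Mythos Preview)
Your proposal is correct and follows essentially the same strategy as the paper: direct method for existence of $u_\kappa$ (coercivity from the lower bound on $W^{\iso}$, weak lower semicontinuity from polyconvexity and convexity of $w$), Dacorogna--Moser to produce an element of $\mathcal{A}^{\iso}$, constant recovery sequence for the $\limsup$, and weak continuity of the Jacobian in $W^{1,p}$ with $p>n$ for the $\liminf$.

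There are two minor tactical differences worth noting. First, in the $\liminf$ step the paper argues by a case split: if the weak limit $z_\infty\notin\mathcal{A}^{\iso}$, then convexity of $w$ and weak convergence of $\det Dz_{\kappa_j}$ give $0<\int_U w(\det Dz_\infty)\,dx\le\liminf\int_U w(\det Dz_{\kappa_j})\,dx$, forcing $\liminf I_{\kappa_j}(z_{\kappa_j})=\infty$; you instead use the quantitative lower bound~(\ref{NIM_static_assumption}) to obtain strong convergence $\det Du_\kappa\to 1$ in $L^{p/n}$ and then identify the limit. Second, you prove existence of $u_\infty$ directly by the direct method on $\mathcal{A}^{\iso}$ (using weak closedness of the determinant constraint), whereas the paper deduces it from $\Gamma$-convergence together with the uniform bound $I_\kappa(u_\kappa)\le I_\kappa(\widetilde{v})=\int_U W^{\iso}(D\widetilde{v})\,dx$ on the minimizers. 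Both variants are standard and equally valid; your route is slightly more self-contained, while the paper's organization makes the equi-coercivity of $\{I_\kappa\}$ explicit, which is also what feeds into Proposition~\ref{NIM_prop_static}.
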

\begin{proof}
\textbf{(1)} There exists $\hat{v} \in C^2\l(v\l(\overline{U}\r),v\l(\overline{U}\r)\r)$ invertible with $\hat{v}^{-1} \in C^2\l(v\l(\overline{U}\r),v\l(\overline{U}\r)\r)$ such that 
\begin{gather*}
 \begin{cases} \det D\hat{v}(x) = \det Dv^{-1}(x) & \text{ for } x \in v\l(U\r) \\ \hat{v}(x) =x & \text{ for } x \in \partial v\l(U\r)   \end{cases}
\end{gather*}
\cite[p.~191]{Dacorogna2}. Here we use $\l|U\r|=\l|v\l(U\r)\r|$. Set $\widetilde{v}:=\hat{v}\circ v$. Note $\widetilde{v} \in C^2\l(\overline{U},v\l(\overline{U}\r)\r)$ invertible with $\widetilde{v}^{-1} \in C^2\l(v\l(\overline{U}\r),\overline{U}\r)$. Note $\det D\widetilde{v} \equiv 1$ and $v|_{\partial U} = \widetilde{v}|_{\partial U}$. Therefore $\mathcal{A} \neq \emptyset$ with $0 \leq \underset{z \in \mathcal{A}}{\inf}~I_{\kappa}\l(z\r) < \infty$, and $\mathcal{A}^{\iso} \neq \emptyset$ with $0 \leq \underset{z \in \mathcal{A}}{\inf}~I_{\infty}\l(z\r) < \infty$. 

Note $W^{\iso}$ polyconvex and $w$ convex implies that $I_{\kappa}$ is polyconvex. Note $W^{\iso}$ coercive and $0\leq w$ implies that $I_{\kappa}$ is coercive. Therefore $I_{\kappa}$ is polyconvex and coercive. There exists $u_{\kappa} \in \operatorname{argmin}\l\lb I_{\kappa}(u)~:~ u \in \mathcal{A} \r\rb$ \cite[p.~32]{Evans}.
 
\textbf{(2)} Take $z_{\infty} \in \mathcal{A}$. Note that 
\begin{equation*} 
 I_{\infty}\l(z_{\infty}\r) = \begin{cases} \infty & \text{ for } z_{\infty} \not\in \mathcal{A}^{\iso} \\ \int_U~W^{\iso}\l(Dz_{\infty}\r)~dx & \text{ for } z_{\infty} \in \mathcal{A}^{\iso}  
  \end{cases}
\end{equation*}
Therefore
\begin{equation} \label{NIM_static_lemma_bdd1}
 \underset{\kappa\to \infty}{\operatorname{limsup}}~ I_{\kappa}\l(z_{\infty}\r) \leq I_{\infty}\l(z_{\infty}\r)
\end{equation}
Suppose that $z_{\kappa_j} \underset{\kappa_j \to \infty}{\rightharpoonup} z_{\infty}$ in $W^{1,p}\l(U,\R^n\r)$ where $\l\lb z_{\kappa_j} \r\rb_{j=1}^{\infty} \subset \mathcal{A}$. There are two cases. If $z_{\infty} \in \mathcal{A}^{\iso}$, then 
\begin{gather*}
 I_{\infty}\l(z_{\infty}\r) \underset{z_{\infty} \in \mathcal{A}^{\iso}}{=} \int_U~W^{\iso}\l(Dz_{\infty}\r)~dx \\ \underset{W^{\iso} \text{ polyconvex}}{\leq} \underset{\kappa_j \to \infty}{\operatorname{liminf}}~\int_U~W^{\iso}\l(Dz_{\kappa_j}\r)~dx \underset{0\leq w}{\leq} \underset{\kappa_j \to \infty}{\operatorname{liminf}}~I_{\kappa_j}\l(z_{\kappa_j}\r) 
\end{gather*}
Otherwise $z_{\infty} \in \mathcal{A} - \mathcal{A}^{\iso}$. Note that $z_{\kappa_j} \underset{\kappa_j \to \infty}{\rightharpoonup} z_{\infty}$ in $W^{1,p}\l(U,\R^n\r)$ implies $\det Dz_{\kappa_j} \underset{\kappa_j \to \infty}{\rightharpoonup} \det Dz_{\infty}$ in $L^{\frac{p}{n}}\l(U,\R^n\r)$ \cite[p.~31]{Evans}. This implies that 
\begin{gather*}
 0\underset{z_{\infty} \in \mathcal{A} - \mathcal{A}^{\iso}}{<} \int_U~w\l(\det Dz_{\infty}\r)~dx \underset{w \text{ convex}}{\leq} \underset{\kappa_j \to \infty}{\operatorname{liminf}}~\int_U~w\l(\det Dz_{\kappa_j}\r)~dx 
\end{gather*}
We obtain $\underset{\kappa_j \to \infty}{\operatorname{liminf}}~I_{\kappa_j}\l(z_{\kappa_j}\r) = \infty = I_{\infty}\l(z_{\infty}\r)$. Therefore we have 
\begin{equation} \label{NIM_static_lemma_bdd2}
 I_{\infty}\l(z_{\infty}\r) \leq \underset{\kappa_j \to \infty}{\operatorname{liminf}}~I_{\kappa_j}\l(z_{\kappa_j}\r)
\end{equation}
in both cases. By (\hreff{NIM_static_lemma_bdd1}) and (\hreff{NIM_static_lemma_bdd2}), we have that $I_{\kappa}$ $\Gamma$-converges to $I_{\infty}$ on $\mathcal{A}$ in the weak topology.

\textbf{(3)} Note that
\begin{equation} \label{NIM_static_lemma_bdd3}
\begin{gathered}
 \int_U~ c_1\l|Du_{\kappa}\r|^p - c_2~dx \leq \int_U~W^{\iso}\l(Du_{\kappa}\r)~dx \\ \underset{0\leq w}{\leq} I_{\kappa}\l(u_{\kappa}\r) \underset{u_{\kappa} \in \operatorname{argmin}\l\lb I_{\kappa}(u)~:~ u \in \mathcal{A} \r\rb}{\leq} I_{\kappa}\l(\widetilde{v}\r) \\ \underset{\widetilde{v} \in \mathcal{A}^{\iso}}{=} \int_U~W^{\iso}\l(D\widetilde{v}\r)~dx
\end{gathered}
\end{equation}
This implies that $\underset{0<\kappa}{\sup}~\l|\l|u_{\kappa}\r|\r|_{W^{1,p}\l(U,\R^n\r)} <\infty$. Therefore there exists a subsequence $\l\lb u_{\kappa_j}\r\rb_{j=1}^{\infty}$ and $u_{\infty} \in \mathcal{A}$ such that $u_{\kappa_j} \underset{\kappa_j \to \infty}{\rightharpoonup} u_{\infty}$ in $W^{1,p}\l(U,\R^n\r)$. We have 
\begin{equation*} 
I_{\infty}\l(u_{\infty}\r) \underset{(\hreff{NIM_static_lemma_bdd2})}{\leq} \underset{\kappa_j \to \infty}{\operatorname{liminf}}~I_{\kappa_j}\l(u_{\kappa_j}\r) < \infty \ . 
\end{equation*}
Therefore $u_{\infty} \in \mathcal{A}^{\iso}$. Take $z \in \mathcal{A}$. We have 
\begin{gather*} 
I_{\infty}\l(u_{\infty}\r) \underset{(\hreff{NIM_static_lemma_bdd2})}{\leq} \underset{\kappa_j \to \infty}{\operatorname{liminf}}~I_{\kappa_j}\l(u_{\kappa_j}\r) \\ \underset{u_{\kappa} \in \operatorname{argmin}\l\lb I_{\kappa}(u)~:~ u \in \mathcal{A} \r\rb}{\leq} \underset{j \to \infty}{\operatorname{liminf}}~I_{\kappa_j}\l(z\r) \underset{(\hreff{NIM_static_lemma_bdd1})}{\leq} I_{\infty}\l(z\r)  
\end{gather*}
Therefore $u_{\infty} \in \operatorname{argmin}\l\lb I_{\infty}(u)~:~ u \in \mathcal{A} \r\rb$
\end{proof}
Note that we do not have a growth condition on $w$. The determinant constraint requires that $w$ be unbounded. Therefore we cannot apply partial regularity for polconvex functionals to show that the minimizers are Lipschitz \cite{Kristensen}.   
\begin{proposition} \label{NIM_prop_static}
 If $u_{\kappa} \in W^{1,\infty}\l(U,\R^n\r)$, then there exists $s_{\kappa} \in W^{1,\infty}\l(U,\R^n\r)$ essentially injective measure preserving such that $$\int_U~\l|u_{\kappa} - s_{\kappa}\r|^{\frac{p}{n}}~dx \leq \DS\frac{C}{\kappa}$$ where $C:=C\l(p,U,v,W\r)$.
\end{proposition}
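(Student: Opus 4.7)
The plan is to convert the minimality of $u_\kappa$ into the bound $\int_U |1-\det Du_\kappa|^{p/n}\,dx \leq C/\kappa$, and then invoke Corollary \hreff{OTM_cor} with exponent $p/n$ to produce $s_\kappa$.

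First, I would use $\widetilde{v} \in \mathcal{A}^{\iso}$, constructed in Step (1) of the proof of Lemma \hreff{NIM_lemma_gamma}, as a competitor. Because $\det D\widetilde{v} \equiv 1$ and $w(1)=0$, we have $I_\kappa(\widetilde{v}) = \int_U W^{\iso}(D\widetilde{v})\,dx$, a quantity independent of $\kappa$ depending only on $W$, $v$, $U$. Using $W^{\iso}\geq 0$ and minimality,
$$\kappa \int_U w(\det Du_\kappa)\,dx \leq I_\kappa(u_\kappa) \leq I_\kappa(\widetilde{v}) =: M.$$
The pointwise lower bound in (\hreff{NIM_static_assumption}) forces $\lambda \leq \det Du_\kappa \leq \Lambda$ a.e. (since otherwise $w(\det Du_\kappa) = \infty$ on a set of positive measure), and on this range $w(x) \geq c_3 |1-x|^{p/n}$, yielding
$$\int_U |1-\det Du_\kappa|^{p/n}\,dx \leq \frac{M}{c_3\,\kappa}.$$

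Second, I would apply Corollary \hreff{OTM_cor} with exponent $p/n > 1$. Its constant depends on $\operatorname{diam}(u_\kappa(U))$, $\lambda$, and $\Lambda$, all of which must be controlled uniformly in $\kappa$. The constants $\lambda, \Lambda$ are fixed. For the diameter, (\hreff{NIM_static_lemma_bdd3}) supplies a uniform $W^{1,p}$-bound on $u_\kappa$; combined with Morrey's embedding ($p>n$) and the boundary condition $u_\kappa|_{\partial U} = v|_{\partial U}$, this gives $\|u_\kappa\|_{L^{\infty}} \leq C(p,U,v,W)$, hence $\operatorname{diam}(u_\kappa(U)) \leq d(p,U,v,W)$.

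The main obstacle is verifying the essential injectivity of $u_\kappa$ required by Corollary \hreff{OTM_cor}. Since $u_\kappa$ agrees on $\partial U$ with the homeomorphism $v$ and satisfies $\det Du_\kappa \geq \lambda > 0$ a.e., a degree-theoretic argument or an application of an $\operatorname{INV}$/invertibility theorem \`a la \v{S}ver\'ak would be the natural route. Once this is granted, Corollary \hreff{OTM_cor} furnishes an essentially injective measure preserving $s_\kappa \in W^{1,\infty}(U,\R^n)$ with
$$\int_U |u_\kappa - s_\kappa|^{p/n}\,dx \leq C(p,d,\lambda,\Lambda) \int_U |1-\det Du_\kappa|^{p/n}\,dx \leq \frac{C(p,U,v,W)}{\kappa}.$$
If essential injectivity cannot be established, one fallback is to invoke Proposition \hreff{SLG_prop} instead, which avoids that hypothesis but only produces $s_\kappa$ with $\det Ds_\kappa\equiv 1$, so the global measure-preserving conclusion stated would need to be weakened accordingly.
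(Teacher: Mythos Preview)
Your approach is essentially identical to the paper's. The paper resolves your flagged concern about essential injectivity by citing Ball's result that a $W^{1,\infty}$ map with $\det Du_\kappa \geq \lambda > 0$ agreeing with the $C^3$ diffeomorphism $v$ on $\partial U$ is essentially injective with $u_\kappa(\overline{U}) = v(\overline{U})$; this same conclusion also yields the diameter bound $\diam u_\kappa(U) \leq 2\|v\|_{L^\infty}$ directly, in place of your Morrey-embedding argument.
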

\begin{proof}
Note that $I_{\kappa}\l(u_{\kappa}\r) < \infty$ implies that $0<\lambda \leq \det Du_{\kappa} \leq \Lambda$ for a.e. $x \in U$. Recall that $v \in C^3\l(\overline{U},\R^n\r)$ and $u_{\kappa} \in W^{1,\infty}\l(U,\R^n\r)$ with $u_{\kappa}=v$ on $\partial U$. Therefore $u_{\kappa}$ is essentially injective with $u_{\kappa}\l(\overline{U}\r) = v\l(\overline{U}\r)$ \cite[p.~3]{Ball}. This implies that $\diam u(U) \leq 2~\l|\l|v\r|\r|_{L^{\infty}\l(U,\R^n\r)}$. By Corollary \hreff{OTM_cor}, there exists $s_{\kappa} \in W^{1,\infty}\l(U,\R^n\r)$ essentially injective measure preserving such that 
\begin{equation} \label{NIM_prop_static_bdd1}
\int_U~\l|u_{\kappa} - s_{\kappa}\r|^{\frac{p}{n}}~dx \leq \hat{C} \int_U~\l|1-\det Du_{\kappa}\r|^{\frac{p}{n}}~dx   
\end{equation}
where $\hat{C}:=\hat{C}\l(p,2~\l|\l|v\r|\r|_{L^{\infty}\l(U,\R^n\r)},\lambda,\Lambda\r)=\hat{C}\l(p,U,v,W\r)$. We have
\begin{gather*}
\kappa~\int_U~\l|u_{\kappa} - s_{\kappa}\r|^{\frac{p}{n}}~dx \underset{(\hreff{NIM_prop_static_bdd1})}{\leq} \hat{C}~\kappa~ \int_U~\l|1-\det Du_{\kappa}\r|^{\frac{p}{n}}~dx \\ \underset{(\hreff{NIM_static_assumption})}{\leq} \frac{\hat{C}~\kappa}{c_3}~\int_U~w\l(\det Du_{\kappa}\r)~dx \underset{0\leq W^{\iso}}{\leq} \frac{\hat{C}}{c_3}~I_{\kappa}\l(u_{\kappa}\r) \\ \underset{(\hreff{NIM_static_lemma_bdd3})}{\leq}    \frac{\hat{C}}{c_3}~\int_U~W^{\iso}\l(D\widetilde{v}\r)~dx 
\end{gather*}
Set $C:=\frac{\hat{C}}{c_3}~\int_U~W^{\iso}\l(D\widetilde{v}\r)~dx$. Note that $C:=C\l(p,U,\widetilde{v},W,\hat{C}\r)=C\l(p,U,v,W\r)$.
\end{proof}
\subsection{Dynamic limits} \label{NIM_Dynamic}
Take $W^{\iso} \in C^{\infty}\l(\operatorname{Mat}^{n\times n},\R\r)$. Assume that   
\begin{equation} \begin{gathered} \label{NIM_condition_iso}
 W^{\iso}_{p^i_k p^j_l}(A) F^i_k F^j_l \geq \theta \l|F\r|^2 
\end{gathered} \end{equation}
for all $F \in \operatorname{Mat}^{n \times n}$ and $A \in N \subset \operatorname{Mat}^{n \times n}$. Here $N \ni I$ is an open set, and $0<\theta$. Take $w \in C^{\infty}\l(\R,\R\r)$. Assume that $W^{\dil}(A) = w\l(\det A\r)$ with     
\begin{equation} \begin{gathered} \label{NIM_condition_dil}
w''(1)>0 \text{ and } w'(1)=0=w(1) 
\end{gathered} \end{equation}
Set $\ell:=\frac{n}{2}+4$. Take $v(x;\kappa), ~\widetilde{v}(x;\kappa) \in W^{\ell,2}\l(\R^n,\R^n\r)$ with $v(x;\kappa)$ invertible and $\det Dv \equiv 1$. Assume that 
\begin{equation} \label{NIM_condition_initial}
\begin{gathered}
\underset{\kappa>0}{\sup}~\l|\l|v(x;\kappa) -x\r|\r|_{W^{\ell,2}\l(\R^n,\R^n\r)} + \l|\l|\widetilde{v}(x;\kappa)\r|\r|_{W^{\ell,2}\l(\R^n,\R^n\r)} < \infty \\ \text{ and } \DS \cup_{\kappa>0}\DS\cup_{x \in \R^n} Dv(x;\kappa) \subset \subset N \ .
\end{gathered}
\end{equation}
By assumptions (\hreff{NIM_condition_iso}), (\hreff{NIM_condition_dil}), and (\hreff{NIM_condition_initial}) we can determine $T:=T\l(v,\widetilde{v},W\r)$ and $\kappa_0:=\kappa_0\l(v,\widetilde{v},W\r)$ such that for any $\kappa>\kappa_0$ there exists $u(x,t;\kappa) \in C^2\l(\R^n \times [0,T],\R^n\r)$ satisfying (\hreff{BACK_eqn_motion}) \cite[p.~212]{Schochet}. Here $u\l(\cdot,t;\kappa\r)$ is injective with \newline $\underset{\kappa > \kappa_0}{\sup}\ \underset{t \in [0,T]}{\sup}\ \l|\l|u(x,t;\kappa)-x\r|\r| _{W^{\ell,2}\l(\R^n,\R^n\r)} < \infty$. 
\begin{proposition} \label{NIM_prop}
For any $\kappa>\kappa_1:=\kappa_1\l(U,v,\widetilde{v},W\r)$, there exist $\l\lb s(x,t;\kappa) \r\rb_{t \in [0,T]} \subset W^{\ell,2}\l(U,\R^n\r)$  with $\det Ds \equiv 1$ such that  $$\underset{t\in [0,T]}{\sup}~\int_{U}\l|u(x,t;\kappa) - s(x,t;\kappa)\r|^2\ dx \leq \frac{C}{\kappa} \ .$$ where $C:=C\l(U,v,\widetilde{v},W\r)$. 
\end{proposition}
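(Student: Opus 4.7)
The plan is to combine conservation of energy for the dynamic equations (\hreff{BACK_eqn_motion}) with a pointwise-in-$t$ application of Corollary \hreff{OTM_cor}. Since $\det Dv\equiv 1$ and $w(1)=w'(1)=0$ by (\hreff{NIM_condition_dil}), the dilational part of the initial energy vanishes identically, while the kinetic and isochoric parts are controlled by $\l|\l|\widetilde{v}\r|\r|_{W^{\ell,2}}$ and $\l|\l|Dv - I\r|\r|_{W^{\ell-1,2}}$, uniformly in $\kappa$ by (\hreff{NIM_condition_initial}). Normalising so $W^{\iso}(I)=0$ and using the coercivity from (\hreff{NIM_condition_iso}) to bound $W^{\iso}\l(Du\r)$ from below near $I$, conservation of
$$E(t):=\int_{\R^n}\frac{1}{2}\l|u_t\r|^2 + W^{\iso}\l(Du\r) + \kappa\, w\l(\det Du\r)\ dx$$
yields
$$\kappa \int_U w\l(\det Du\l(x,t;\kappa\r)\r)\ dx \leq \kappa \int_{\R^n} w\l(\det Du\r)\ dx \leq E\l(0\r) \leq C_1$$
uniformly in $t\in[0,T]$ and $\kappa > \kappa_0$, with $C_1:=C_1\l(v,\widetilde{v},W\r)$.

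Next I would convert the bound on $w(\det Du)$ into a bound on $(1-\det Du)^2$. The Sobolev embedding $W^{\ell,2}\l(\R^n\r)\hookrightarrow C^{3,\alpha}\l(\R^n\r)$, valid because $\ell = n/2 + 4$, together with Schochet's uniform estimate on $\l|\l|u(\cdot,t;\kappa)-x\r|\r|_{W^{\ell,2}}$, gives $\det Du$ uniformly bounded in $C^{2,\alpha}$. Combined with $\det Du(x,0;\kappa)\equiv 1$ and uniform control on $\partial_t \det Du = \operatorname{cof}(Du):Du_t$, possibly after enlarging $\kappa_1$, we obtain
$$\lambda \leq \det Du\l(x,t;\kappa\r) \leq \Lambda$$
for some $0<\lambda<1<\Lambda$ independent of $\kappa$ and $t$. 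On the compact interval $[\lambda,\Lambda]$, the Taylor expansion around $1$ together with $w(1)=w'(1)=0$ and $w''(1)>0$ yields $w(\xi) \geq c_0(\xi-1)^2$ for some $c_0:=c_0(w,\lambda,\Lambda)>0$; integrating,
$$\int_U \l|1-\det Du\l(x,t;\kappa\r)\r|^2\ dx \leq \frac{C_1}{c_0\,\kappa}\ .$$

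Finally I apply Corollary \hreff{OTM_cor} with $p=2$ at each fixed $t$ to $u(\cdot,t;\kappa)|_U$. Essential injectivity is immediate from the global injectivity of $u(\cdot,t;\kappa)$; the diameter bound $\diam u(U)\leq d$ follows from the uniform $L^{\infty}$-bound on $u-x$; and the two-sided determinant bound was established in the previous paragraph, all independent of $\kappa$ and $t$. The corollary then produces a measure preserving essentially injective $s(\cdot,t;\kappa)$ with
$$\int_U \l|u - s\r|^2\ dx \leq C_2 \int_U \l|1-\det Du\r|^2\ dx \leq \frac{C}{\kappa}$$
where $C := C\l(U,v,\widetilde{v},W\r)$, uniformly in $t$. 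The claimed $W^{\ell,2}$-regularity of $s$ is automatic from the construction: the proof of Corollary \hreff{OTM_cor} yields either $s = u$ (if $\det Du \equiv 1$, in which case $s \in W^{\ell,2}$ already) or, via Lemma \hreff{OTM_approx_lemma}, a $C^{\infty}_{\operatorname{diff}}(\R^n,\R^n)$-map translated by a constant, hence trivially in $W^{\ell,2}(U,\R^n)$. The main obstacle is securing the $\kappa$-uniform two-sided bound on $\det Du$: the upper bound is immediate from Sobolev embedding, but the lower bound requires $\det Du$ to remain in a neighbourhood of $1$ where the quadratic control of $w$ applies, which is where the choice of $\kappa_1$ relative to the time horizon $T$ provided by Schochet's theorem must be calibrated.
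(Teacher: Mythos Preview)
Your approach and the paper's diverge at the crucial step of bounding $\int_U|1-\det Du|^2\,dx$ by $C/\kappa$. The paper does not use energy conservation at all; it invokes the full quantitative form of Schochet's estimate,
\[
\|u(\cdot,t;\kappa)-x\|_{W^{\ell,2}} + \sqrt{\kappa}\,\|1-\det Du(\cdot,t;\kappa)\|_{W^{\ell-1,2}} \leq C'
\]
uniformly in $t\in[0,T]$ and $\kappa>\kappa_0$. Sobolev embedding then gives $\|1-\det Du\|_{L^\infty}\leq\tilde C/\sqrt\kappa$, which in one stroke delivers both the $L^2$ bound and the pointwise two-sided bound $\tfrac12\leq\det Du\leq\tfrac32$ once $\kappa>\kappa_1:=\kappa_0+4\tilde C^2$. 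Corollary~\hreff{OTM_cor} is then applied with fixed $\lambda=\tfrac12$, $\Lambda=\tfrac32$, $d=2r(1+\tilde C)$, and the rest is bookkeeping together with Lemma~\hreff{OTM_approx_lemma} to upgrade to a smooth $s$.

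Your energy-conservation route is a plausible alternative in spirit, but it has a genuine gap exactly where you flag the main obstacle. The time-continuity bound $|\det Du(x,t)-1|\leq\int_0^t|\partial_s\det Du|\,ds$ yields at best $|\det Du-1|\leq CT$ with $C$ independent of $\kappa$: the setup gives a uniform bound only on $\|u(\cdot,t;\kappa)-x\|_{W^{\ell,2}}$, not on $u_t$, and even granting a uniform bound on $Du_t$ the estimate does not tighten as $\kappa$ grows. So ``enlarging $\kappa_1$'' cannot help; either $CT<1$ for the fixed $T$ supplied by Schochet and the bound holds for all $\kappa>\kappa_0$, or it fails for all. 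On the energy side, making $E(t)$ finite over $\R^n$ requires subtracting not only $W^{\iso}(I)$ but also the null Lagrangian $DW^{\iso}(I)\!:\!(Du-I)$, and the inequality $\kappa\int w(\det Du)\,dx\leq E(0)$ then relies on the renormalised isochoric density being nonnegative, which (\hreff{NIM_condition_iso}) guarantees only while $Du$ stays in $N$. These points can all be patched by citing more from Schochet, but at that point you are essentially back to the paper's argument, which extracts everything needed from the single displayed estimate and avoids the calibration issue entirely.
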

\begin{proof}
For all $\kappa > \kappa_0$ and $t \in [0,T]$, we have 
\begin{gather*}
\l|\l|u(x,t;\kappa) - x\r|\r|_{W^{\ell,2}\l(\R^n,\R^n\r)} + \sqrt{\kappa} \l|\l|1 - \det Du(x,t;\kappa)\r|\r|_{W^{\ell-1,2}\l(\R^n,\R^n\r)}  \leq C'
\end{gather*}
where $C':=C'\l(v,\widetilde{v},W\r)$ \cite[p.~6]{Schochet}. Set $r:=\underset{x\in U}{\sup}~\l|x\r|$. Since $\ell>\frac{n}{2}+3$, this implies   
\begin{equation} \label{NIM_prop_bdd2} 
 \l|\l|u(x,t;\kappa) - x\r|\r|_{W^{1,\infty}\l(B_r(0),\R^n\r)} + \sqrt{\kappa} \l|\l|1 - \det Du(x,t;\kappa)\r|\r|_{L^{\infty}\l(B_r(0),\R^n\r)}  \leq \widetilde{C}
\end{equation}
where $\widetilde{C}:=\widetilde{C}\l(C',r\r) = \widetilde{C}\l(U,v,\widetilde{v},W\r)$. Therefore 
\begin{gather*}
 \l|u(y,t;\kappa)-u(z,t;\kappa)\r| - \l|y-z\r| \leq \l|\l(u(y,t;\kappa) - y\r)-\l(u(z,t;\kappa) - z\r)\r| \\ \leq \l|\l|u(x,t;\kappa) - x\r|\r|_{W^{1,\infty}\l(B_r(0),\R^n\r)} \l|y-z\r| \leq 2~r~ \widetilde{C}
\end{gather*}
for all $y,z \in B_r(0)$. This implies that 
\begin{equation} \begin{gathered} \label{NIM_prop_bdd3}
 \underset{t \in [0,T]}{\sup}~ \l|u(y,t;\kappa)-u(z,t;\kappa)\r| \leq 2~r \l(1+\widetilde{C}\r) 
\end{gathered} \end{equation}
for all $\kappa > \kappa_0$. Set $\kappa_1:=\kappa_0 + 4~\widetilde{C}^2$. We have 
\begin{equation} \label{NIM_prop_bdd4}
 \underset{t \in [0,T]}{\sup}~\l|\l|1 - \det Du(x,t;\kappa)\r|\r|_{L^{\infty}\l(B_r(0),\R^n\r)} \underset{(\hreff{NIM_prop_bdd2})}{\leq} \frac{1}{\sqrt{\kappa}} \widetilde{C} \leq \frac{1}{2}
\end{equation}
for all $\kappa > \kappa_1$. By Corollary \hreff{OTM_cor} with $\lambda =\frac{1}{2}$, $\Lambda = \frac{3}{2}$, and $d=2r\l(1+\widetilde{C}\r)$, there exists $s_1\l(x,t;\kappa\r) \in W^{1,\infty}\l(U,\R^{n}\r)$ essentially injective measure preserving such that 
\begin{equation} \label{NIM_prop_bdd1}
\int_U~ \l|u(x,t;\kappa)-s_1(x,t;\kappa)\r|^2\ dx \leq \hat{C} \int_U~ \l| 1 - \det Du(x,t;\kappa) \r|^2\ dx 
\end{equation}
where $\hat{C}:=\hat{C}\l(U,\widetilde{C}\r)=\hat{C}\l(U,v,\widetilde{v},W\r)$. If $\int_U~ \l| 1 - \det Du(x,t;\kappa) \r|^2\ dx = 0$, then set $s(x,t;\kappa):=u(x,t;\kappa)$. Otherwise, we can assume that $\int_U~ \l| 1 - \det Du(x,t;\kappa) \r|^2\ dx > 0$. By Lemma \hreff{OTM_approx_lemma}, there exists $s_2\l(x,t;\kappa\r) \in C^{\infty}_{\operatorname{diff}}\l(\R^n,\R^n\r)$ with $\det D s_2 \equiv 1$ such that 
\begin{equation} \label{NIM_prop_bdd5}
\int_U~ \l|s_1\l(x,t;\kappa\r)-s_2\l(x,t;\kappa\r)\r|^2\ dx \leq  \int_U~ \l| 1 - \det Du(x,t;\kappa) \r|^2\ dx
\end{equation}
Set $s(x,t;\kappa) := s_2\l(x,t;\kappa\r)$. For any $\kappa > \kappa_1$, we have 
\begin{equation}
\begin{gathered}
\int_U~\l|u(x,t;\kappa)-s(x,t;\kappa)\r|^2\ dx \\ \leq 4\int_U~\l|u(x,t;\kappa)-s_1\l(x,t;\kappa\r)\r|^2\ dx + 4\int_U~\l|s_1\l(x,t;\kappa\r)-s_2(x,t;\kappa)\r|^2\ dx \\ \underset{(\hreff{NIM_prop_bdd5})}{\leq} 4\int_U~\l|u(x,t;\kappa)-s_1\l(x,t;\kappa\r)\r|^2\ dx + 4 \int_U~ \l| 1 - \det Du(x,t;\kappa) \r|^2\ dx \\ \underset{(\hreff{NIM_prop_bdd1})}{\leq} \l(4+4~\hat{C}\r) \int_U~ \l| 1 - \det Du(x,t;\kappa) \r|^2\ dx \underset{(\hreff{NIM_prop_bdd4})}{\leq} \l(4+4~\hat{C}\r) ~\l| U \r|~ \frac{\widetilde{C}^2}{\kappa} 
\end{gathered}
\end{equation}
Set $C:=\l(4+4~\hat{C}\r) ~\l| U \r|~ \widetilde{C}^2$. Note that $C:=C\l(U,v,\widetilde{v},W\r)$ because $\widetilde{C}:=\widetilde{C}\l(U,v,\widetilde{v},W\r)$ and $\hat{C}:=\hat{C}\l(U,v,\widetilde{v},W\r)$.
\end{proof}
Note that the convexity assumption on $W^{\iso}$ is restrictive. However, the addition of a null Lagrangian to the energy function may yield (\hreff{NIM_condition_iso}) without affecting the dynamics \cite[p.~209]{Schochet}.

\vspace{-0.5cm}
\section*{Acknowledgements} 
Interest in the problem arose from discussions with J. Krishnan, M. Lewicka and M.R. Pakzad. The project benefited from suggestions of M. Christ, M. Fathi, and S. Govindjee. The research was supported through NSF RTG grant DMS-1344991 and NSF grant DMS-1301661.     

\vspace{-0.1cm}
\bibliographystyle{acm}
\bibliography{references_draft2.bib}

\end{document}